\def\blfootnote{\xdef\@thefnmark{}\@footnotetext}
\newtheorem{theorem}{Theorem}[section]
\newtheorem{lemma}[theorem]{Lemma}
\newtheorem{proposition}[theorem]{Proposition}
\newtheorem{corollary}[theorem]{Corollary}
\theoremstyle{definition}
\newtheorem{remark}[theorem]{Remark}
\newtheorem*{definition*}{Definition}
\newcommand{\N}{\Bbb N}
\newcommand{\s}{\sigma }
\newcommand{\f}{\varphi}
\newcommand{\g}{\gamma }
\renewcommand{\geq}{\geqslant}
\renewcommand{\leq}{\leqslant}
\newcommand{\ed} {\end{document}}
\let\leq=\leqslant
\let\geq=\geqslant
\numberwithin{equation}{section}
\begin{document}
\title{Compact groups in which all elements have countable right Engel sinks}

\author{E. I. Khukhro}
\address{Charlotte Scott Research Centre for Algebra, University of Lincoln, U.K., and \newline \indent  Sobolev Institute of Mathematics, Novosibirsk, 630090, Russia}
\email{khukhro@yahoo.co.uk}

\author{P. Shumyatsky}

\address{Department of Mathematics, University of Brasilia, DF~70910-900, Brazil}
\email{pavel@unb.br}

\keywords{Compact groups; profinite groups; pro-$p$ groups; finite groups; Lie ring method; Engel condition; locally nilpotent groups}
\subjclass[2010]{20E18, 20F19, 20F45, 22C05}

\begin{abstract}
A right Engel sink of an element $g$ of a group $G$ is a set ${\mathscr R}(g)$ such that for every $x\in G$ all sufficiently long commutators $[...[[g,x],x],\dots ,x]$ belong to ${\mathscr R}(g)$.  (Thus, $g$ is a right Engel element precisely when we can choose ${\mathscr R}(g)=\{ 1\}$.) It is proved that if every element of a compact (Hausdorff) group $G$ has a countable (or finite) right Engel sink, then $G$ has a finite normal subgroup $N$ such that $G/N$ is locally nilpotent.
\end{abstract}
\maketitle
\section{Introduction}

A group $G$ is called an Engel group if for every $x,g\in G$ the equation $[x,\,{}_{n} g]=1$ holds for some $n=n(x,g)$ depending on $x$ and $g$.
Henceforth, we use the left-normed simple commutator notation
$[a_1,a_2,a_3,\dots ,a_r]:=[...[[a_1,a_2],a_3],\dots ,a_r]$ and the abbreviation $[a,\,{}_kb]:=[a,b,b,\dots, b]$ where $b$ is repeated $k$ times. A group is said to be locally nilpotent if every finite subset generates a nilpotent subgroup. Clearly, any locally nilpotent group is an Engel group. Wilson and Zelmanov \cite{wi-ze} proved the converse for profinite groups: any Engel profinite group is locally nilpotent. Later Medvedev \cite{med} extended this result to Engel compact  groups. (Henceforth by compact groups we mean compact Hausdorff groups.)

Generalizations of Engel groups can be defined in terms of Engel sinks.

\begin{definition*} \label{d}
 A \textit{left Engel sink} of an element $g$ of a group $G$ is a set ${\mathscr E}(g)$ such that for every $x\in G$ all sufficiently long commutators $[x,g,g,\dots ,g]$ belong to ${\mathscr E}(g)$, that is, for every $x\in G$ there is a positive integer $l=l(x,g)$ such that
 $[x,\,{}_{l}g]\in {\mathscr E}(g)$ for all $l\geq l(x,g).
 $

  A \textit{right Engel sink} of an element $g$ of a group $G$ is a set ${\mathscr R}(g)$ such that for every $x\in G$ all sufficiently long commutators $[g,x,x,\dots ,x]$ belong to ${\mathscr R}(g)$, that is, for every $x\in G$ there is a positive integer $r(x,g)$ such that
 $[g,\,{}_{r}x]\in {\mathscr R}(g)$ for all $r\geq r(x,g).
 $
 \end{definition*}
 \noindent (Thus, $g$ is a left Engel element precisely when we can choose ${\mathscr E}(g)=\{ 1\}$, and a right Engel element when we can choose ${\mathscr R}(g)=\{ 1\}$.)

Earlier we considered in \cite{khu-shu,khu-shu191} compact groups $G$ in which  every element has a countable or finite left Engel sink and proved the following theorem.

\begin{theorem}[{\cite[Theorem~1.2]{khu-shu191}}]\label{t-left}
If every element of a compact group $G$ has a countable
left Engel sink, then $G$ has a finite normal subgroup $N$ such that $G/N$ is locally nilpotent.
\end{theorem}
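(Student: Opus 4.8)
The plan is to analyse $G$ through its identity component and then reassemble. Write $G^{0}$ for the connected component of $1$, so that $G^{0}$ is a connected compact group and $Q=G/G^{0}$ is profinite. Since a countable left Engel sink for $g$ restricts to a countable sink for $g$ in any closed subgroup, and its image is a countable sink for the image of $g$ in any continuous quotient, both $G^{0}$ and $Q$ inherit the hypothesis. I would first show that $G^{0}$ is abelian, then that $Q$ is locally nilpotent modulo a finite normal subgroup, and finally control the action of $Q$ on $G^{0}$ in order to deduce the conclusion for $G$.

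For the connected part I claim $G^{0}$ must be abelian. A non-abelian connected compact group has a continuous homomorphism onto a compact simple Lie group $S$ (by the structure theory of connected compact groups, the closure of the derived group is a product of simple compact Lie groups), so it suffices to rule out a non-abelian $S$. In such an $S$ one chooses $g$ regular in a maximal torus and studies the dynamics of the iterated commutator map $x\mapsto [x,\,{}_{l}g]$; the point is to show that for every threshold $l_{0}$ the tails $\{[x,\,{}_{l}g] : x\in S,\ l\ge l_{0}\}$ accumulate on a set of positive dimension, so that no countable set can serve as a left Engel sink for $g$. Pulling $g$ back to $G^{0}$ contradicts the hypothesis, forcing $G^{0}$ abelian and hence, being compact abelian, locally nilpotent.

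The profinite quotient $Q$ is the core, and here the Lie ring method together with Wilson--Zelmanov \cite{wi-ze} is decisive. The first step is a Baire-category upgrade of countability: for fixed $g$ the sets $X_{g,k}=\{x\in Q : [x,\,{}_{l}g]\in\overline{\mathscr E(g)}\ \text{for all}\ l\ge k\}$ are closed and cover $Q$, so one of them has non-empty interior, producing an open subgroup and a uniform bound $k$ on which the $g$-commutator dynamics land in $\overline{\mathscr E(g)}$. The crucial gain is that in every finite continuous quotient of $Q$ the image of the countable set $\mathscr E(g)$ is \emph{finite}, so after projection one has a genuine finite left Engel sink with a uniform commutator-length bound on a finite-index subgroup. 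This reduces matters to the finite-sink situation of \cite{khu-shu}: the positive solution of the restricted Burnside problem bounds the relevant data in the finite quotients, the associated graded Lie ring satisfies a linear Engel-type condition on a large generating set, Zelmanov's theorem makes it nilpotent, and Wilson--Zelmanov translates this into local nilpotency of an open subgroup of $Q$, the finitely many remaining obstructions being collected into a finite normal subgroup.

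It remains to assemble the pieces and to name the obstacle. With $G^{0}$ abelian and an open locally nilpotent subgroup of $Q$ modulo a finite normal part, one lifts back to $G$ and checks that each conjugation automorphism $\alpha$ of the compact abelian group $G^{0}$ induced by an element $g\in G$ is trivial modulo a finite subgroup: writing, in additive notation, $[x,\,{}_{l}g]=(\alpha-1)^{l}(x)$ for $x\in G^{0}$, a countable left Engel sink forces the closed descending images $(\alpha-1)^{l}(G^{0})$ to have countable, hence finite, intersection (a closed countable subgroup of a compact group is finite), which one leverages to make the action finite; the residual finite pieces are then absorbed into a single finite normal subgroup $N$, giving $G/N$ locally nilpotent. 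I expect the genuine difficulties to be twofold: (i) the explicit commutator dynamics in a compact simple Lie group certifying an uncountable sink, where one must prevent the tails of $[x,\,{}_{l}g]$ from being swallowed by any countable set uniformly in $x$; and (ii) passing from the Baire-category ``uniform on an open subgroup'' statement to a controlled finite Engel sink in the finite quotients without losing the finiteness that ultimately produces $N$ — here the fact that the closure of a countable sink may be uncountable, yet projects to a finite set in each finite quotient, is exactly the lever to be exploited.
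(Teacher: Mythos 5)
You should first note that this paper does not prove Theorem~\ref{t-left} at all: it is imported verbatim from \cite[Theorem~1.2]{khu-shu191}, and the present paper's sections (pronilpotent, prosoluble, nonprosoluble length, compact) mirror the architecture of that proof for the right-sink analogue. Your outer frame --- split along $G^0$ and $Q=G/G^0$, rule out non-abelian compact simple Lie groups, control the action on $G^0$ --- matches that architecture, and the Lie-group exclusion can indeed be made precise (compare Lemma~\ref{l-lie} here, where the doubling dynamics $[a_\vartheta,\,{}_n g]=a_{(-2)^n\vartheta}$ in $SO_3(\mathbb{R})$ produce continuum many forced sink elements). But the core of your profinite step contains a genuine logical gap. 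Your ``crucial gain'' --- that in every continuous finite quotient the image of the countable sink is finite --- is vacuous: in a finite group \emph{every} element trivially has a finite left Engel sink (the whole group is one), so nothing of the countability hypothesis survives projection, and the finite-sink theorem of \cite{khu-shu} is a statement about a compact group itself, not something you can apply quotient-by-quotient and then glue. To glue you would need uniform bounds (on sink size, or on Engel length) across the inverse system, which countability does not provide; the actual proof extracts uniformity by Baire category \emph{inside} the profinite group, obtaining coset identities of the shape $[[x,\,{}_k a^{s}],a^{t}]=1$ on cosets of open subgroups, exactly as in Lemmas~\ref{l-ksm}, \ref{l-ad} and \ref{l-pi} here.

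Even granting the Lie-ring step, your sketch jumps from ``the graded Lie algebra is nilpotent'' to ``an open subgroup of $Q$ is locally nilpotent,'' and that jump is where most of the real proof lives: the Zelmanov/Wilson--Zelmanov machinery only settles the finitely generated pro-$p$ (hence pronilpotent) case, after which one still needs the prosoluble case via coprime actions and a profinite version of Thompson's theorem \cite{th} on Fitting heights (to get $F(G)\neq 1$ and then $\gamma_\infty(G)$ finite --- this is where the finite normal subgroup $N$ actually comes from, via Schur's theorem and Schur multiplier arguments), and the general profinite case via bounding the nonsoluble length (Wilson's theorem \cite{wil83}, Feit--Thompson, Schreier's conjecture) together with a lemma showing that an infinite Cartesian product of non-abelian finite simple groups contains an element with an uncountable sink. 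Your proposal names none of these, and ``the finitely many remaining obstructions being collected into a finite normal subgroup'' is precisely the conclusion to be proved, not a step. Finally, your assembly argument is also unsound as stated: from the sink condition one only knows that each orbit $[x,\,{}_l g]$ \emph{eventually} lies in the sink, with $l(x,g)$ depending on $x$, so the stable image $\bigcap_l(\alpha-1)^l(G^0)$ need not be contained in the closure of the countable sink without a further Baire-type argument; the actual proof instead shows that the finite subgroup $E$ generated by the minimal sinks of $G/G^0$ acts trivially on the divisible abelian group $G^0$, via the $p$-group and Maschke analysis carried out in Lemma~\ref{l-central} here.
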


(Henceforth by ``countable'' we mean ``finite or denumerable''.)

For right Engel sinks we proved earlier in \cite{khu-shu172} that if every element of a compact group has a finite right Engel sink, then the group is finite-by-(locally nilpotent). In the present paper we extend this result to countable right Engel sinks.

\begin{theorem}\label{t-main}
Suppose that $G$ is a compact group in which every element has a countable right Engel sink. Then $G$ has a finite normal subgroup $N$ such that $G/N$ is locally nilpotent.
\end{theorem}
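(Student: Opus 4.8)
The plan is to treat the connected and totally disconnected parts of $G$ separately and then recombine. Recall that the identity component $G_0$ is a closed normal subgroup with $G/G_0$ profinite, so it suffices to understand the profinite quotient together with the compact connected group $G_0$, and to control their interaction. Throughout, the decisive new ingredient compared with the finite-sink case \cite{khu-shu172} will be a Baire category argument that converts the countability of the sinks $\mathscr{R}(g)$ into genuine finiteness on suitable open subgroups; once that is achieved, the finite right Engel sink theorem of \cite{khu-shu172} becomes the engine that drives the rest of the argument, while the left-sink result of Theorem~\ref{t-left} may be used to handle parallel subcases.

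First I would exploit compactness. For a fixed $g$ and each value $v\in\mathscr{R}(g)$, set $X_v=\{x\in G:\ [g,\,{}_{r}x]=v \text{ for all sufficiently large } r\}$. By the sink hypothesis the countably many sets $X_v$ cover $G$, and since a compact group is a Baire space, some $X_v$ is non-meagre and hence its closure has non-empty interior. Translating this genericity into algebra should show that the stabilisation value of $[g,\,{}_{r}x]$ depends on $x$ only through a coset of an open subgroup, so that $g$ in fact possesses a finite right Engel sink after passage to that open subgroup. The technical heart here is to express the $X_v$ in a form to which Baire category applies (as suitable countable unions or intersections of closed sets) and to control the integer $r(x,g)$ with enough uniformity that the stabilisation is genuinely measurable on cosets.

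For the connected component I would use the structure theory of compact connected groups as inverse limits of compact connected Lie groups. In a compact semisimple Lie group one can produce an element $g$ and a one-parameter family of conjugators for which the iterated commutators $[g,\,{}_{r}x]$ run through an uncountable set, contradicting countability of $\mathscr{R}(g)$; this should force every compact connected Lie quotient of $G_0$ to be abelian, whence $G_0$ itself is abelian. A further argument, again of Baire type and coupled with analysis of the action of $G$ on the abelian group $G_0$, should then confine the non-hypercentral behaviour of $G_0$ to a finite piece that can be absorbed into the sought-after normal subgroup $N$.

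For the profinite quotient I would reduce to the pro-$p$ case via Sylow theory in profinite groups and invoke the Lie ring method of Wilson and Zelmanov \cite{wi-ze}: one associates to a pro-$p$ group its graded Lie ring, shows that the finiteness of sinks obtained above yields an Engel identity in this Lie ring, and applies Zelmanov's theorem to deduce nilpotency and hence local nilpotency. The main obstacle, and the point at which right sinks are genuinely harder than the left sinks of \cite{khu-shu191}, is that right Engel conditions govern the hypercentre rather than the Hirsch--Plotkin radical, so the obstruction to local nilpotency is not automatically normal or small. The crux of the proof will therefore be to show that, after the Baire-category reduction, the set of elements failing to be right Engel modulo a finite subgroup generates a \emph{finite} normal subgroup $N$; establishing that this $N$ is finite, rather than merely closed, is where the countability hypothesis must be pushed to its limit and where I expect the heaviest technical work to lie.
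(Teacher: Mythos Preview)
Your outline captures the correct architecture---split into $G_0$ and $G/G_0$, kill semisimple Lie factors to make $G_0$ abelian, use Baire category and the Wilson--Zelmanov Lie method for pro-$p$ groups---and this is indeed how the paper proceeds. However, the central reduction you propose contains a genuine gap.

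The claim that Baire category converts countable sinks into \emph{finite} sinks on an open subgroup, after which one can simply invoke the finite-sink theorem of \cite{khu-shu172}, does not go through. First, your sets $X_v=\{x:[g,\,{}_rx]=v\text{ eventually}\}$ need not cover $G$: the definition of a right Engel sink only forces the tail of $\big([g,\,{}_rx]\big)_r$ into $\mathscr R(g)$, not to stabilise at a single value. The paper handles this by working instead with the closed sets $T_{i,j,k}=\{x:[g,\,{}_kx]=s_i,\ [s_i,x]=s_j\}$. Second, and more seriously, even once Baire yields a coset $Nd$ on which such an identity holds, this says nothing about the other cosets of $N$, so no finite sink for $g$ emerges. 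What the paper extracts is much weaker: for each pair $a,b$ there exist $k,s,t$ with $[[b,\,{}_ka^s],a^t]=1$ (Lemma~\ref{l-ksm}). This Engel-type identity is then fed into the Lie algebra $L_p(P)$ to prove ad-nilpotency of homogeneous elements and a polynomial identity, whence Zelmanov gives nilpotency of $L_p(P)$; Lazard and Breuillard--Gelander then make $P$ soluble, and only at that point---via the metabelian trick $\mathscr R(g^{-1})\supseteq\mathscr E(g)$ of Lemma~\ref{l-metab}---does one pass to \emph{left} sinks and invoke Theorem~\ref{t-left}.

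Beyond pro-$p$, your sketch omits two substantial layers that the paper cannot avoid. The prosoluble case (Section~4) requires a chain of lemmas on coprime automorphisms, a profinite version of Thompson's theorem on Fitting height, and a delicate argument that $F(G)\ne 1$; none of this is subsumed by ``Sylow theory''. The general profinite case (Section~5) then needs a bound on the nonprosoluble length via Wilson's theorem on $2$-element cosets, enabling induction. Your final paragraph, proposing to show that the non-right-Engel elements generate a finite normal subgroup, does not correspond to any step actually carried out; the finite normal $N$ arises instead as $\gamma_\infty(G)$ (in the profinite case) or via Schur's theorem on $[F,F]$ after showing $G_0$ is central in a finite-index subgroup (in the compact case).
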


 In Theorem~\ref{t-main} it also follows that there is a locally nilpotent subgroup of finite index --- just consider $C_G( N)$.

While it is well-known that the inverse of a right Engel element is a left Engel element,
it is unclear if the existence of a countable (or finite) right Engel sink of a given element implies the existence of a countable (or finite) left Engel sink of this element or its inverse. It is only by virtue of our Theorem~\ref{t-main} that if all elements of a compact group have countable right Engel sinks, then in fact all elements have finite right and left Engel sinks contained in the same finite normal subgroup.

The proof uses the aforementioned Wilson--Zelmanov theorem for profinite groups. First the case of pro-$p$ groups  is considered, where Lie ring methods are applied including Zelmanov's theorem on Lie algebras satisfying a polynomial identity and generated by elements all of whose products are ad-nilpotent
\cite{ze92,ze95,ze17}. As we noted in \cite{khu-shu172}, it is easy to see that if every element  of a pro-$p$ group has a finite right Engel sink,  then the group is locally
nilpotent. But in the present paper, with countable right Engel  sinks, the case of pro-$p$ groups requires substantial efforts.
Then the case of prosoluble groups is settled by using properties of coprime actions including a profinite analogue of a theorem of Thompson~\cite{th}.
The general case of profinite groups is dealt with by bounding the nonsoluble length of the group, which enables induction on this length. (We introduced the nonsoluble length in \cite{khu-shu131}, although bounds for nonsoluble length had been implicitly used in various earlier papers, for example, in the celebrated Hall--Higman paper \cite{ha-hi},
or in Wilson's  paper \cite{wil83}; more recently, bounds for the nonsoluble length were used in the study of verbal subgroups in finite and profinite groups \cite{dms1, 68, austral, khu-shu132}.)
Finally, the result for compact groups is derived with the use of the structure theorems for compact groups.

\section{Preliminaries}
In this section we recall some  notation and terminology and establish some general properties of left and right Engel sinks in compact and profinite groups.

Our notation and terminology for profinite and compact groups is standard; see, for example,  \cite{rib-zal},  \cite{wil}, and \cite{hof-mor}.  A subgroup (topologically) generated by a subset $S$ is denoted by $\langle S\rangle$. Recall that centralizers are closed subgroups, while commutator subgroups $[B,A]=\langle [b,a]\mid b\in B,\;a\in A\rangle$ are the closures of the corresponding abstract commutator subgroups.

For a group $A$ acting by automorphisms on a group $B$ we use the usual notation for commutators $[b,a]=b^{-1}b^a$ and commutator subgroups $[B,A]=\langle [b,a]\mid b\in B,\;a\in A\rangle$, as well as for centralizers $C_B(A)=\{b\in B\mid b^a=b \text{ for all }a\in A\}$
and $C_A(B)=\{a\in A\mid b^a=b\text{ for all }b\in B\}$.

We record for convenience the following simple lemma.

\begin{lemma}[{\cite[Lemma~2.1]{khu-shu191}}] \label{l-fng}
Suppose that $\varphi$ is a continuous automorphism of a compact group $G$ such that
$G=[G,\varphi ]$. If $N$ is a normal subgroup of $G$ contained in $C_G(\varphi )$, then $N\leq Z(G)$.
\end{lemma}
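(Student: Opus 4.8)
The plan is to deduce the statement from the three subgroups lemma applied inside the semidirect product $H=G\rtimes\langle\varphi\rangle$, where $\varphi$ acts on $G$ in the prescribed way and $\langle\varphi\rangle$ is taken as an abstract cyclic group. In $H$ the hypothesis $N\leq C_G(\varphi)$ becomes $[N,\varphi]=1$, the normality $N\trianglelefteq G$ gives $[N,G]\leq N$, and the assumption $G=[G,\varphi]$ says that the closed subgroup generated by the commutators $[g,\varphi]$, $g\in G$, is all of $G$. The target $N\leq Z(G)$ is the assertion $[G,N]=1$.

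First I would record the two vanishing commutators that feed the lemma. Since $N\trianglelefteq G$ we have $[N,G]\leq N$, and as $N\leq C_G(\varphi)$ this yields $[N,G]\leq C_G(\varphi)$, so that $[N,G,\varphi]=1$. On the other hand, $[N,\varphi]=1$ gives immediately $[N,\varphi,G]=1$. I would then invoke the three subgroups lemma (via the Hall--Witt identity) for the subgroups $N$, $G$, $\langle\varphi\rangle$ of $H$: from $[N,G,\varphi]=1$ and $[N,\varphi,G]=1$ it follows that $[\varphi,G,N]=1$. Concretely, this says that the abstract commutator subgroup generated by the elements $[g,\varphi]$, $g\in G$, centralizes $N$.

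Finally I would transfer this conclusion from the abstract commutator subgroup to its topological closure. For each fixed $n\in N$ the centralizer $C_G(n)$ is a closed subgroup of $G$ containing every commutator $[g,\varphi]$, hence containing the closed subgroup they generate, which by hypothesis is $[G,\varphi]=G$. Therefore $C_G(n)=G$ for every $n\in N$, i.e.\ $N\leq Z(G)$.

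The only point requiring genuine care — and the sole real obstacle — is the interplay between the abstract commutator calculus, in which the three subgroups lemma operates, and the topological closure built into the definition of $[G,\varphi]$. This is resolved exactly by the fact recalled in the preliminaries that centralizers are closed subgroups: once the abstract subgroup $\langle [g,\varphi]\mid g\in G\rangle$ is known to centralize $N$, so does its closure $G$. Everything else is a routine verification of which commutators vanish.
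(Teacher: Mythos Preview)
Your proof is correct. The paper itself does not include a proof of this lemma; it is merely quoted from the companion paper \cite{khu-shu191}, so there is no argument here to compare against. That said, your three-subgroups-lemma approach inside $G\rtimes\langle\varphi\rangle$, followed by passing to the closure via the closedness of centralizers, is the standard and expected way to establish this kind of statement, and is almost certainly what the cited source does as well. One cosmetic remark: to match the cyclic form of the three subgroups lemma exactly you want $[\varphi,N,G]=1$ rather than $[N,\varphi,G]=1$, but since $[N,\varphi]=[\varphi,N]$ as subgroups this is immaterial.
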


We denote by $\pi (k)$ the set of prime divisors of $k$, where $k$ may be a positive integer or a Steinitz number, and by $\pi (G)$ the set of prime divisors of the orders of elements of a (profinite) group $G$. Let $\sigma$ be a set of primes. An element $g$ of a group is  a $\sigma$-element if $\pi(|g|)\subseteq \sigma$, and a group $G$ is a $\sigma$-group if all of its elements are $\sigma$-elements. We denote by $\sigma'$ the complement of $\sigma$ in the set of all primes. When $\sigma=\{p\}$,  we write $p$-element, $p'$-element, etc.

Recall that a pro-$p$ group  is an inverse limit of finite $p$-groups, a pro-$\sigma $ group is an inverse limit of finite $\sigma$-groups, a pronilpotent group is an inverse limit of finite nilpotent groups, a prosoluble group is an inverse limit of finite soluble groups.

We denote by  $\gamma _{\infty}(G)=\bigcap _i\gamma _i(G)$ the intersection of the lower central series of a group $G$. A profinite group $G$ is pronilpotent if and only if $\gamma _{\infty}(G)=1$.

Profinite groups have Sylow $p$-subgroups and satisfy analogues of the Sylow theorems.  Prosoluble groups satisfy analogues of the
theorems 
 on Hall $\pi$-subgroups. 
 We refer the reader to the corresponding chapters in \cite[Ch.~2]{rib-zal} and \cite[Ch.~2]{wil}.
 We add a simple folklore lemma.

\begin{lemma}[{\cite[Lemma~2.2]{khu-shu191}}]\label{l-prosol-by-prosol}
A profinite group $G$ that is an extension of a prosoluble group $N$ by a prosoluble group $G/N$ is prosoluble.
\end{lemma}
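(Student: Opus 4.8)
The plan is to reduce everything to finite quotients via the standard characterization that a profinite group is prosoluble if and only if each of its finite continuous quotients is soluble, equivalently, each quotient $G/U$ by an open normal subgroup $U$ is soluble. Since $G$ is the inverse limit of the groups $G/U$ over all open normal $U\trianglelefteq G$, it suffices to fix such a $U$ and prove that the finite group $G/U$ is soluble; prosolubility of $G$ then follows automatically. So first I would fix an arbitrary open normal subgroup $U\trianglelefteq G$ and analyse $G/U$ by splitting it along the image of $N$.

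The image of $N$ in $G/U$ is $NU/U\cong N/(N\cap U)$. Because $U$ is open in $G$, the subgroup $N\cap U$ is open of finite index in $N$, so $N/(N\cap U)$ is a finite continuous quotient of the prosoluble group $N$ and is therefore soluble. Hence $NU/U$ is a soluble normal subgroup of $G/U$.

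Next I would examine the corresponding factor $(G/U)\big/(NU/U)\cong G/NU$. Since $NU$ contains the open subgroup $U$ it is itself open (hence also closed) in $G$, and it contains $N$, so $G/NU$ is a finite continuous quotient of $G/N$; as $G/N$ is prosoluble, $G/NU$ is soluble. Thus $G/U$ is an extension of the soluble group $NU/U$ by the soluble group $G/NU$, and for finite groups solubility is closed under such extensions, so $G/U$ is soluble. As $U$ was an arbitrary open normal subgroup, $G$ is prosoluble.

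I expect essentially no genuine obstacle here: the lemma is folklore, and the only thing to verify beyond the finite-group fact that soluble-by-soluble is soluble is the topological bookkeeping, namely that $N\cap U$ is open in $N$ and that $NU$ is open in $G$, both of which are immediate once $U$ is open of finite index. The one conceptual ingredient is the reduction to finite quotients through open normal subgroups, which is precisely the description of a profinite group as the inverse limit of its finite continuous images.
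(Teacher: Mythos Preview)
Your argument is correct and is exactly the standard folklore proof. Note, however, that the paper does not supply its own proof of this lemma: it merely records the statement with a citation to \cite[Lemma~2.2]{khu-shu191}, so there is nothing in the present paper to compare against beyond confirming that your reduction to finite quotients via open normal subgroups is the expected one.
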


We shall use several times the following well-known fact, which is straightforward from the Baire Category Theorem (see \cite[Theorem~34]{kel}).

\begin{theorem}\label{bct}
If a compact Hausdorff group is a countable union of closed subsets, then one of these subsets has non-empty interior.
\end{theorem}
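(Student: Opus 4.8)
The statement is purely topological --- the group structure of $G$ plays no role --- so the plan is to prove the corresponding assertion for an arbitrary non-empty compact Hausdorff space in place of $G$, and then read off the group case. The engine will be the Baire Category Theorem in the form cited, namely that every locally compact Hausdorff space is a \emph{Baire space}, where I take a Baire space to be one that is not the union of countably many nowhere dense subsets, a set being \emph{nowhere dense} when the interior of its closure is empty. The key elementary observation is that for a \emph{closed} set $F$ one has $\overline{F}=F$, so such an $F$ is nowhere dense exactly when $\operatorname{int}(F)=\varnothing$; this is what converts the hypothesis ``empty interior'' into ``nowhere dense''.

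First I would note that $G$, being compact and Hausdorff, is in particular locally compact and Hausdorff, so the cited Baire Category Theorem (Kelley, Theorem~34) applies and shows that $G$ is a Baire space. I would then argue by contradiction: assuming $G=\bigcup_{n}F_n$ with each $F_n$ closed, suppose that \emph{every} $F_n$ has empty interior. By the observation above each $F_n$ is then nowhere dense, whence $G$ is displayed as a countable union of nowhere dense sets.

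This is impossible, because $G$ is non-empty (it contains the identity) and a non-empty Baire space is never the union of countably many nowhere dense subsets --- equivalently, the ambient space, being open and dense in itself, is not nowhere dense and so cannot be covered in this way. Therefore at least one $F_n$ has non-empty interior, as required. One may equally phrase the conclusion in the sharper dual form, that $\bigcup_{n}\operatorname{int}(F_n)$ is dense in $G$, but the weak form stated is all that is needed in the applications.

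The deduction is short, so the only place demanding genuine care --- the nearest thing to an obstacle --- is matching the hypotheses to the precise version of the Baire Category Theorem invoked. One must use the locally compact Hausdorff form rather than the complete-metric form, since a general compact group need not be metrizable; once the correct form is cited, no further quantitative or algebraic input is needed.
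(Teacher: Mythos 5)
Your proof is correct and follows exactly the route the paper intends: the paper gives no argument of its own, simply remarking that the statement is ``straightforward from the Baire Category Theorem (see \cite[Theorem~34]{kel})'', and your deduction --- compact Hausdorff implies locally compact Hausdorff, hence Baire, and a closed set is nowhere dense precisely when its interior is empty --- is that straightforward deduction, with the hypotheses of the locally compact (rather than complete-metric) form correctly matched.
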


We now establish some general properties of Engel sinks.
Clearly, the intersection of two left Engel sinks of a given element $g$ of a group $G$ is again a left Engel sink of $g$, with the corresponding function $l(x,g)$ being the maximum of the two functions. Therefore, if $g$ has a \textit{finite} left Engel sink, then $g$ has a unique smallest left  Engel sink, which  has the following  characterization.

\begin{lemma}[{\cite[Lemma~2.1]{khu-shu}}]\label{l-min} If an element $g$ of a group $G$ has a finite left Engel sink, then $g$ has a smallest left Engel sink $\mathscr E (g)$
and for every $s\in \mathscr E (g)$ there is an integer $k\geq 1$ such that  $s=[s,\,{}_kg]$.
\end{lemma}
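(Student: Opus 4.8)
The plan is to prove the two assertions in turn, the first by a finiteness/intersection argument and the second by a dynamical (eventual periodicity) argument applied to the map $\theta\colon y\mapsto [y,g]$.

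First I would establish the existence of a smallest left Engel sink. Let $E_0$ be a finite left Engel sink of $g$. Since $E_0$ is finite, only finitely many of its subsets are left Engel sinks, and by the observation preceding the lemma (the intersection of two left Engel sinks is again one, with the corresponding function being the maximum of the two) their intersection $\mathscr E(g)$ is itself a left Engel sink contained in $E_0$. To see that $\mathscr E(g)$ is contained in an \emph{arbitrary} left Engel sink $E$, note that $E\cap E_0$ is again a left Engel sink and is contained in $E_0$, whence $\mathscr E(g)\subseteq E\cap E_0\subseteq E$. Thus $\mathscr E(g)$ is the smallest left Engel sink of $g$.

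Next I would analyze the forward orbits of $\theta$. Writing $x_0=x$ and $x_{l+1}=[x_l,g]$, so that $x_l=[x,\,{}_{l}g]$, the defining property of the finite sink $\mathscr E(g)$ says that for each $x$ the sequence $(x_l)$ eventually lies in the finite set $\mathscr E(g)$. Since the sequence is the deterministic trajectory $x_{l+1}=\theta(x_l)$ of a single function $\theta$, once it enters the finite set $\mathscr E(g)$ some value must repeat, and from the first repetition onward the sequence cycles. Hence for every $x$ the sequence $(x_l)$ is eventually periodic, and all its eventual values $s$ are periodic points of $\theta$, i.e.\ satisfy $s=\theta^{k}(s)=[s,\,{}_{k}g]$ for some $k\geq 1$ (a period).

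Finally I would close the loop by minimality. Let $P=\{s\in G: s=[s,\,{}_{k}g]\text{ for some }k\geq 1\}$ be the set of periodic points. The previous paragraph shows that for every $x$ all sufficiently long commutators $[x,\,{}_{l}g]$ lie in $P\cap \mathscr E(g)$, so $P\cap \mathscr E(g)$ is a left Engel sink contained in $\mathscr E(g)$. By the minimality established above, $\mathscr E(g)\subseteq P\cap\mathscr E(g)$, and therefore $\mathscr E(g)\subseteq P$; that is, every $s\in \mathscr E(g)$ satisfies $s=[s,\,{}_{k}g]$ for some $k\geq 1$, as required. The one point that needs care --- and which I regard as the crux --- is the eventual-periodicity step: it relies essentially on $\mathscr E(g)$ being \emph{finite}, so that the deterministic orbit of $\theta$ cannot wander indefinitely and must fall into a cycle; without finiteness the periodic structure of the sink is lost.
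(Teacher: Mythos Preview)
Your proof is correct. The paper itself does not reproduce a proof of this lemma (it is quoted from \cite{khu-shu}), but your argument matches the approach the paper sketches in the paragraph preceding the lemma: the intersection of sinks is a sink, so a finite sink has a minimal sink among its subsets, and this minimal one is contained in every sink; the characterization of elements of $\mathscr E(g)$ as periodic points of $\theta\colon y\mapsto[y,g]$ via eventual periodicity inside a finite set and then appeal to minimality is exactly the standard argument behind the cited result.
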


There are similar observations about right Engel sinks. The intersection of two right Engel sinks of a given element $g$ of a group $G$ is again a right Engel sink of $g$, with the corresponding function $r(x,g)$ being the maximum of the two functions. Therefore, if $g$ has a \textit{finite} right Engel sink, then $g$ has a unique smallest right Engel sink, which  has the following  characterization.

\begin{lemma}[{\cite[Lemma~2.2]{khu-shu172}}]\label{l-min-r}
If an element $g$ of a group $G$ has a finite right Engel sink, then $g$ has a smallest right Engel sink $\mathscr R(g)$ and for every $z\in \mathscr R(g)$ there are integers $n\geq 1$ and $m\geq 1$ and an element $x\in G$ such that $z=[g,{}_nx]=[g,{}_{n+m}x]$.
\end{lemma}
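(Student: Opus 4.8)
The plan is to first produce the smallest right Engel sink by a cardinality argument, and then to identify its elements with recurrent commutator values via a finiteness/periodicity argument.

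Since the paragraph preceding the statement records that the intersection of two right Engel sinks of $g$ is again a right Engel sink, I would begin by choosing, among all finite right Engel sinks of $g$ (which exist by hypothesis), one of smallest cardinality, and denote it $\mathscr R(g)$. To see that this is the desired smallest sink, let $S$ be any right Engel sink of $g$. Then $\mathscr R(g)\cap S$ is again a right Engel sink, and it is contained in the finite set $\mathscr R(g)$; by minimality of $|\mathscr R(g)|$ we must have $\mathscr R(g)\cap S=\mathscr R(g)$, that is, $\mathscr R(g)\subseteq S$. Hence $\mathscr R(g)$ is contained in every right Engel sink of $g$ and is therefore the unique smallest one.

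For the characterization, I would fix $x\in G$ and consider the sequence $z_i=[g,{}_ix]$, noting that $z_{i+k}=[z_i,{}_kx]$. By definition of the sink there is a threshold $r_0=r(x,g)\geq 1$ with $z_i\in\mathscr R(g)$ for all $i\geq r_0$. Since $\mathscr R(g)$ is finite, the pigeonhole principle yields indices $r_0\leq n<n'$ with $z_n=z_{n'}$; setting $m=n'-n\geq1$ and using $z_{i+k}=[z_i,{}_kx]$ we get $z_{n+k}=[z_n,{}_kx]=[z_{n'},{}_kx]=z_{n'+k}$ for all $k\geq0$, so the sequence is periodic with period $m$ from the index $n$ onward. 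In particular $z_i=z_{i+m}$ for every $i\geq n$, which exhibits each such $z_i$ as $[g,{}_ix]=[g,{}_{i+m}x]$ with $i\geq1$ and $m\geq1$.

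Finally I would let $R^\ast$ be the set of all $z\in\mathscr R(g)$ for which there exist $x\in G$ and integers $n\geq1$, $m\geq1$ with $z=[g,{}_nx]=[g,{}_{n+m}x]$. By construction $R^\ast\subseteq\mathscr R(g)$. The previous paragraph shows that for each $x$ and each $i\geq n$ the element $z_i$ lies in $R^\ast$ (it is in $\mathscr R(g)$ because $i\geq r_0$, and it is periodic), so $R^\ast$ is itself a right Engel sink of $g$, with threshold $n$ for the element $x$. By the minimality established above, $\mathscr R(g)\subseteq R^\ast$, whence $R^\ast=\mathscr R(g)$, which is exactly the assertion that every $z\in\mathscr R(g)$ has the stated form. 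The step I regard as the crux is the verification that $R^\ast$ is genuinely a right Engel sink: one must produce, for each $x$, a single threshold beyond which all the commutators $[g,{}_ix]$ are recurrent values, and this is precisely what the eventual periodicity of the sequence $(z_i)$ supplies.
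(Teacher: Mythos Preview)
Your argument is correct. The paper does not prove this lemma here but merely cites it from \cite{khu-shu172}; your minimality-of-cardinality argument for the existence of the smallest sink and your eventual-periodicity argument for the characterization are the standard way to establish the result, and all steps are sound.
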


(Of course, the elements $x$ and numbers $m,n$ in the above lemma vary for different $z$ and are not unique.)

The following lemma was proved by Heineken \cite{hei}.

\begin{lemma}[{\cite[12.3.1]{rob}}]\label{l-hei}
 If $g$ is a right Engel element of a group $G$, then $g^{-1}$ is a left Engel element.
\end{lemma}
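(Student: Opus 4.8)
The plan is to reduce the statement to a single commutator identity and then feed in the right Engel hypothesis. Write $g^{-1}$ for the element whose left Engel property we want, and fix $x\in G$. First I would record the elementary identity $[y,g^{-1}]=[g,y]^{g^{-1}}$, valid for every $y$; this is a one-line computation, since both sides equal $y^{-1}gyg^{-1}$. Using this together with the conjugation rule $[a^{t},s]=[a,s^{t^{-1}}]^{t}$ and the fact that $g$ commutes with its own powers, an easy induction on $n$ yields
\[
[x,\,{}_{n}g^{-1}]=\rho_n^{\,g^{-n}},\qquad \rho_0=x,\quad \rho_k=[g,\rho_{k-1}],
\]
so that $\rho_n$ is the right-normed iterated commutator $[g,[g,\dots ,[g,x]\dots ]]$ with $n$ entries $g$. (In the inductive step one applies the rule with $a=\rho_n$, $t=g^{-n}$, $s=g^{-1}$, noting $(g^{-1})^{g^{n}}=g^{-1}$, and then the base identity with $y=\rho_n$.) Since conjugation does not affect triviality, this already shows that $g^{-1}$ is a left Engel element if and only if, for every $x$, the sequence $\rho_1,\rho_2,\dots$ reaches $1$.

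It remains to deduce $\rho_n=1$ (for $n$ large, depending on $x$) from the hypothesis that $g$ is a right Engel element, and this is where I expect the main obstacle to lie, because of a mismatch of shapes. The right Engel condition provides the vanishing of the \emph{left-normed} commutators $[g,\,{}_{r}y]$ --- in which the second argument $y$ is the repeated one --- for \emph{every} $y$, whereas $\rho_n$ is \emph{right-normed}, with $g$ commuted in from the outside. My plan is to bridge this by a commutator-calculus induction that rewrites each $\rho_n$ as a product of conjugates of left-normed commutators $[g,\,{}_{k}y]$ for suitably chosen arguments $y$ built from $x$ and $g$, and then to invoke the right Engel condition for those arguments; crucially, the hypothesis is available for all arguments, not merely for $x$ itself. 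Carrying this out --- controlling how the depth in $g$ grows while keeping the auxiliary arguments within range of the Engel condition --- is the technical heart of the matter, and is precisely the content of Heineken's classical argument.

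Finally, I expect the induction to terminate with an explicit bound: tracking weights should show that if $[g,\,{}_{n}y]=1$ for the relevant arguments, then $\rho_{n+1}=1$, i.e. the inverse of a right $n$-Engel element is left $(n+1)$-Engel, which specializes to the qualitative assertion we need. Since only the qualitative statement is required here, the non-uniform form of the right Engel hypothesis (an integer $r(y,g)$ for each $y$ separately) suffices to drive the argument. As the lemma is classical, one could simply quote it from \cite[12.3.1]{rob}; the outline above is how I would reconstruct its proof.
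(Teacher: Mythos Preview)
The paper does not give its own proof of this lemma; it is stated with a citation to Robinson \cite[12.3.1]{rob} and attributed to Heineken. Your closing remark---that one could simply quote the result---is exactly what the paper does, so in that respect there is nothing further to compare.

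On the substance of your outline: the identity $[x,\,{}_{n}g^{-1}]=\rho_n^{\,g^{-n}}$ with $\rho_0=x$ and $\rho_k=[g,\rho_{k-1}]$ is correct and is a natural first reduction. The difficulty is entirely in your second step, and here there is a real gap. Your plan to ``rewrite each $\rho_n$ as a product of conjugates of left-normed commutators $[g,\,{}_{k}y]$'' and then invoke the right Engel hypothesis is not carried out, and it is not clear it can be made to work as stated: each $\rho_m=[g,\rho_{m-1}]$ is already trivially of the shape $[g,\,{}_{1}y]$, and you offer no mechanism by which a commutator expansion would produce factors with the Engel length $k$ genuinely growing with~$n$, which is what is needed for the right Engel condition to bite. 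Calling this ``precisely the content of Heineken's classical argument'' is not accurate either: the textbook proof is a short direct computation in which one applies the right Engel hypothesis to a \emph{single} judiciously chosen element built from $x$ and $g$ (rather than to a proliferating family of auxiliary arguments), and then reads off that the resulting trivial commutator is, up to conjugacy, $[x,\,{}_{n+1}g^{-1}]$. So the step you defer is not routine bookkeeping but a different, sharper idea that your outline does not anticipate.
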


Furthermore, for metabelian groups we have the following.

\begin{lemma}[{\cite[Lemma~2.5]{khu-shu172}}]\label{l-metab}
 If $G$ is a metabelian group, then a right Engel sink of the inverse $g^{-1}$ of an element $g\in G$ is a left Engel sink of $g$.
\end{lemma}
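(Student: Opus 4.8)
My plan is to pass to the abelian commutator subgroup $A=G'$ and translate both families of commutators into operator identities there. Since $G$ is metabelian, $A$ is abelian, and conjugation turns $A$ into a module over the commutative group ring $\mathbb{Z}[G/A]$; for $h\in G$ I write $\bar h$ both for the image of $h$ in $G/A$ and for the operator $c\mapsto c^{h}$ it induces on $A$, and I write the module additively. For $c\in A$ one has $[c,h]=c^{-1}c^{h}$, i.e.\ $(\bar h-1)c$ in additive notation, and hence $[c,\,{}_{k}h]=(\bar h-1)^{k}c$. In particular, for any $x\in G$ the element $c:=[x,g]$ lies in $A$, so
\[
[x,\,{}_{l}g]=(\bar g-1)^{l-1}c\qquad(l\geq1).
\]

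The crux is to realize each left Engel value $[x,\,{}_{l}g]$ as a right Engel value of $g^{-1}$. The apparent obstruction is that in $[x,\,{}_{l}g]$ the repeated entry is $g$, while in a right Engel commutator $[g^{-1},\,{}_{r}w]$ of $g^{-1}$ the repeated entry is $w$, so the two families involve the \emph{a priori} different operators $\bar g-1$ and $\bar w-1$. I remove this by forcing $w$ into the coset $gA$, so that $\bar w=\bar g$ and $w$ acts on $A$ exactly as $g$ does. Writing $w=gu$ with $u\in A$, the identity $[g^{-1},w]=\big([g,w]^{-1}\big)^{g^{-1}}$ together with $[g,gu]=[g,u]=(1-\bar g)u$ gives $[g^{-1},w]=\bar g^{-1}(\bar g-1)u$, whence
\[
[g^{-1},\,{}_{r}w]=(\bar g-1)^{r-1}\,\bar g^{-1}(\bar g-1)u=\bar g^{-1}(\bar g-1)^{r}u.
\]

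Comparing the two displays, the choice $u:=c^{g}$ (that is, $u=\bar g c$ in additive notation) makes $\bar g^{-1}\bar g c=c$, so that $[g^{-1},\,{}_{r}w]=(\bar g-1)^{r}c=[x,\,{}_{r+1}g]$. Thus, for a fixed $x$ and the fixed element $w:=g\,[x,g]^{g}$, which depends only on $x$ and not on $l$, I obtain the clean identity $[x,\,{}_{l}g]=[g^{-1},\,{}_{l-1}w]$ for all $l\geq2$. The proof then finishes at once: given any right Engel sink $\mathscr R(g^{-1})$ of $g^{-1}$ and any $x\in G$, apply the defining property of the sink to this single element $w$ to get an integer $r_{0}$ with $[g^{-1},\,{}_{r}w]\in\mathscr R(g^{-1})$ for all $r\geq r_{0}$; by the identity, $[x,\,{}_{l}g]\in\mathscr R(g^{-1})$ for all $l\geq r_{0}+1$, so $\mathscr R(g^{-1})$ is a left Engel sink of $g$. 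I expect the only delicate point to be the inverse-and-conjugation bookkeeping when passing from $g$ to $g^{-1}$ --- precisely the identity $[g^{-1},w]=([g,w]^{-1})^{g^{-1}}$ and the factor $\bar g^{-1}$ it produces --- which is exactly what dictates both the coset choice $w\in gA$ and the correcting factor $u=c^{g}$.
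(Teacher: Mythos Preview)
The paper does not prove this lemma; it merely quotes it from \cite[Lemma~2.5]{khu-shu172}. Your argument is correct: the module computation over $\mathbb{Z}[G/A]$ is sound, the identity $[g^{-1},w]=([g,w]^{-1})^{g^{-1}}$ is valid, and your choice $w=g\,[x,g]^{g}$ indeed yields $[x,\,{}_{l}g]=[g^{-1},\,{}_{l-1}w]$ for all $l\geq 2$, from which the sink statement follows at once. This is essentially the metabelian sharpening of Heineken's inequality (Lemma~\ref{l-hei}): in a general group one only gets that $[x,\,{}_{l}g]$ is a \emph{conjugate} of a right Engel commutator of $g^{-1}$, but in the metabelian case the conjugating factor lies in $A$ and so acts trivially on $A$, which is exactly why your operator identity comes out exact rather than up to conjugacy.
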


Thus, if $G$ is a metabelian group in which all elements have finite right Engel sinks, then all elements of $G$ also have finite left Engel sinks, and if all elements of $G$ have countable right Engel sinks, then all elements of $G$ also have countable left Engel sinks.

\begin{remark} If every element of a group has a countable right Engel sink, then this condition is inherited by every section of the group, and we shall use this property without special references. The same applies to a group in which every element has a finite right Engel sink. Similar properties hold for left Engel sinks. \end{remark}

\section{Pronilpotent groups}

When $G$ is a pro-$p$ group, or more generally a pronilpotent group, the conclusion of the main Theorem~\ref{t-main} is equivalent to $G$ being locally nilpotent, and this is what we prove in this section.

\begin{theorem}
\label{t2}
Suppose that $G$ is a pronilpotent group in which every element has a countable right Engel sink. Then $G$ is locally nilpotent.
\end{theorem}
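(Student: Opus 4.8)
The plan is to reduce the statement to finitely generated pro-$p$ groups, run the Lie-ring machinery there, and then reassemble. Since local nilpotency is inherited by closed sections and can be checked on finitely generated closed subgroups, and since a pronilpotent group is the Cartesian product $G=\prod_p G_p$ of its Sylow subgroups (each $G_p$ again satisfying the hypothesis), the heart of the matter is to prove that a finitely generated pro-$p$ group $P=\langle a_1,\dots,a_d\rangle$ in which every element has a countable right Engel sink is nilpotent. For this I would pass to the restricted Lie algebra $L=L_p(P)=\bigoplus_i D_i/D_{i+1}$ associated with the Zassenhaus--Jennings--Lazard filtration; it is generated by the degree-one images $\tilde a_1,\dots,\tilde a_d$, and by Lazard's correspondence it suffices to prove that $L$ is nilpotent.

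The tool is Zelmanov's theorem: a Lie algebra over a field that satisfies a polynomial identity and is generated by finitely many elements all of whose commutators are ad-nilpotent is nilpotent. So I must verify two things for $L$: (i) every Lie commutator in $\tilde a_1,\dots,\tilde a_d$ is ad-nilpotent, and (ii) $L$ satisfies a polynomial identity. The bridge to the group hypothesis is the standard observation that a group commutator $[g,{}_r x]$ has leading term $\pm(\operatorname{ad}\tilde x)^r\tilde g$ in $L$, so ad-nilpotency of an element $\tilde c$ corresponds to a bounded Engel-type condition $[y,{}_N c]=1$ holding in the appropriate quotients uniformly in $y$. For a \emph{finite} sink Lemma~\ref{l-min-r} yields at once the recurrence $z=[g,{}_n x]=[g,{}_{n+m}x]$, from which such boundedness is read off directly; with a countable sink no periodicity is available a priori, and this is precisely what makes the argument delicate.

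To recover uniformity I would invoke the Baire Category Theorem (Theorem~\ref{bct}). Fixing $g$ with sink $\mathscr R(g)$, the sets $T_k=\{x\in P:[g,{}_r x]\in\overline{\mathscr R(g)}\text{ for all }r\ge k\}$ are closed (continuity of the commutator maps) and cover $P$ (each $x$ eventually lands in $\mathscr R(g)$, hence in $T_{r(x,g)}$); therefore some $T_k$ contains a coset $cU$ of an open subgroup $U$. Thus on a set of finite index the trajectories are confined to the small closed set $\overline{\mathscr R(g)}$ from a \emph{uniform} step $k$ onwards. Feeding this uniformity back through the filtration grading, and using that in each finite nilpotent quotient a value that recurs must be trivial, yields the ad-nilpotency (i) of the generators, and then, by working with the finitely many commutators in the $\tilde a_i$ and applying the same device to the relevant elements, of all their products.

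The main obstacle is (ii), the polynomial identity. This is exactly the point that is automatic for finite sinks—where the recurrence bounds everything—but demands real work for countable ones, since countability alone provides no uniform bound on ad-nilpotency degrees. The plan is again to extract such a bound from Baire: on the open finite-index subgroup $U$ the step functions produced above are uniformly bounded, so the elements of $U$ are ad-nilpotent of bounded degree, which forces a bounded Engel identity on the corresponding subalgebra and hence a genuine polynomial identity that, via the finite-index relation between the Lie algebras of $U$ and of $P$, propagates to all of $L$. With (i) and (ii) in hand, Zelmanov's theorem gives that $L$ is nilpotent, so $P$ is nilpotent. Finally, to reassemble $G=\prod_p G_p$, I would argue that countability of the sinks forces the nilpotency classes of the finitely generated Sylow subgroups to be bounded: otherwise, independently varying the $p$-components of the commutand across the infinitely many primes of unbounded class would produce uncountably many distinct tail-commutators inside a single countable sink, a contradiction. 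Hence every finitely generated closed subgroup of $G$ is nilpotent, and $G$ is locally nilpotent.
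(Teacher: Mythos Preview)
Your overall architecture matches the paper's: reduce to finitely generated pro-$p$ groups, verify the hypotheses of Zelmanov's theorem for $L_p(P)$, then reassemble the Sylow factors. But there are two genuine gaps.

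\textbf{Nilpotency of $L_p(P)$ does not give nilpotency of $P$.} Your sentence ``by Lazard's correspondence it suffices to prove that $L$ is nilpotent'' is not correct. What nilpotency of $L_p(P)$ buys, via Lazard, is only that $P$ is $p$-adic analytic. The paper then needs two further non-trivial steps: the Breuillard--Gelander theorem (a $p$-adic analytic group satisfying a coset identity is soluble), and an induction on the derived length in which the metabelian case is handled by Lemma~\ref{l-metab} (right sinks become left sinks) together with the already-proved left-sink Theorem~\ref{t-left}. Without these, you cannot conclude that $P$ is nilpotent, and your argument stops short.

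\textbf{The Baire step does not yield what you need.} Your sets $T_k=\{x:[g,{}_r x]\in\overline{\mathscr R(g)}\text{ for all }r\ge k\}$ are indeed closed and cover $P$, but the conclusion ``$[g,{}_r x]\in\overline{\mathscr R(g)}$ for $x$ in a coset'' is far too weak: the closure of a countable set can be all of $P$, and even if small it gives no identity. Moreover, for ad-nilpotency of a fixed $\tilde c$ you need control over $[y,{}_N c]$ with $c$ fixed and $y$ arbitrary, whereas the right-Engel hypothesis fixes the \emph{first} variable and repeats the second---so the roles are inverted and the sinks vary with $y$. The paper resolves this in two stages: first (Lemma~\ref{l-ksm}) a Baire argument inside the \emph{procyclic} group $\langle a\rangle$, using the sets $\{x\in\langle a\rangle:[b,{}_k x]=s_i,\ [s_i,x]=s_j\}$ indexed by the countable sink, extracts a genuine identity $[[b,{}_k a^s],a^t]=1$ for each pair $a,b$; then a second Baire argument over $P$ (for ad-nilpotency) and over $P\times P$ (for the coset identity feeding Theorem~\ref{t-coset}) makes $k,s,t$ uniform on a coset. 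Your sketch skips the extraction of an actual identity, and your route to the polynomial identity (``bounded ad-nilpotency on $U$ forces a bounded Engel identity'') presumes a uniform bound you have not produced. The paper obtains the PI not from bounded Engel but directly from the Wilson--Zelmanov coset-identity theorem.

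Your reassembly idea (forcing bounded class across primes by an uncountability argument) is different from the paper's, which instead shows $G$ is Engel by applying the same $\langle a\rangle$-Baire trick to handle all but finitely many primes at once with a single exponent $k+1$, and the remaining finitely many primes individually via the pro-$p$ case. Your idea could perhaps be made to work, but as stated it needs a concrete construction of the element whose sink would be uncountable.
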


First we establish an Engel-like property.

\begin{lemma}\label{l-ksm}
Suppose that $G$ is a profinite group in which every element has a countable right Engel sink. For any elements $a,b\in G$ there exist positive integers $k,s,m$ (depending on $a,b$) such that
\begin{equation*}
 [[b,\,{}_ka^{s}],a^{t}]=1.
\end{equation*}

\end{lemma}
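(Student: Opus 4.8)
The plan is to study how the procyclic group $A=\overline{\langle a\rangle}$ acts on $b$ through the iterated right-commutator maps $f_n\colon A\to G$, $f_n(y)=[b,{}_ny]$, and to exploit that the countable sink $\mathscr R(b)$ absorbs all of their tails. For every $y\in A$ the sink property gives $f_n(y)=[b,{}_ny]\in\mathscr R(b)$ for all sufficiently large $n$, so $A=\bigcup_{n\ge1,\;z\in\mathscr R(b)}f_n^{-1}(z)$. Each $f_n$ is continuous and each singleton $\{z\}$ is closed, so every $f_n^{-1}(z)$ is closed; as $\mathscr R(b)$ is countable, this exhibits the compact Hausdorff group $A$ as a countable union of closed subsets. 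The target is to produce a positive power $a^{t}$ centralizing some $[b,{}_ka^{s}]$.

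By the Baire-category statement Theorem~\ref{bct}, one of the sets $f_n^{-1}(z)$ has non-empty interior, and hence contains a coset of an open subgroup of $A$. Since the open subgroups of the procyclic group $A$ are exactly the subgroups $\overline{\langle a^{s}\rangle}$, there are $n\ge1$, $z\in\mathscr R(b)$, an integer $j_0\ge0$ and $s\ge1$ with $f_n\equiv z$ on the coset $a^{j_0}\overline{\langle a^{s}\rangle}$; equivalently
\[
[b,{}_na^{\,j_0+sj}]=z\qquad\text{for all }j\ge0 ,
\]
and, as $\{a^{\,j_0+sj}\}$ is dense in this coset and $f_n$ is continuous, $f_n\equiv z$ on the whole closed coset. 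Replacing $j_0$ by $j_0+s$ if necessary to keep the inner exponent positive, we see that $z=[b,{}_na^{\,j_0}]$ is already a commutator of the required shape, namely $[b,{}_k a^{\,j_0}]$ with $k=n$ occurrences of the positive power $a^{\,j_0}$.

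It then remains to produce a positive integer $t$ with $[z,a^{t}]=1$, that is, to show that $z$ has finite orbit under conjugation by $A$. Here I would feed the constancy into the commutator identity $[u,yc]=[u,c]\,[u,y]^{c}$, run it through the $n$ nested slots while comparing $f_n(y)$ with $f_n(ya^{s})$, and use that both equal $z$ to cancel the terms involving $a^{s}$. The base case $n=1$ is immediate: $[b,a^{\,j_0}]=[b,a^{\,j_0+s}]$ forces $b^{a^{s}}=b$, so $a^{s}\in C_G(b)$ and even $[b,a^{s}]=1$. Carrying the cancellation through the nested commutators should pin $z$ down to an element fixed by $\overline{\langle a^{s}\rangle}$, yielding $[z,a^{t}]=1$ for a suitable multiple $t$ of $s$, and hence the asserted relation $[[b,{}_na^{\,j_0}],a^{t}]=1$.

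The main obstacle is precisely this last passage from topological constancy on a coset to an honest algebraic commuting relation, because $G$ is an arbitrary profinite group with no nilpotency available to annihilate the higher terms in the commutator expansions. It is essential that Baire category delivers genuine constancy on an open set, rather than the weaker output one would hope to get cheaply: for a \emph{finite} sink a pigeonhole argument gives a periodicity $z=[z,{}_ma]$ at once (this is the content of Lemma~\ref{l-min-r}), but with a merely countable sink the tail sequence $([b,{}_ra])_r$ may well be injective, so no periodicity is available for free, and even the weaker relation $z=[z,{}_ma]$ would only make the orbit of $z$ under $x\mapsto[x,a]$ finite while leaving its conjugation orbit $\{z^{a^{i}}\}$ possibly infinite. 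It is exactly the uniform stabilization coming from the countability hypothesis via Theorem~\ref{bct} that upgrades the situation to a genuine centralizing power of $a$.
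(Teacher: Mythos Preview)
Your proposal contains a real gap at exactly the point you yourself flag as ``the main obstacle.'' From the constancy of the single map $f_n(y)=[b,{}_ny]$ on a coset $Nd$ you try to extract $[z,a^{t}]=1$ by expanding $[b,{}_n(nd)]$ and cancelling. For $n=1$ this works, as you note. For $n\ge 2$ it does not obviously go through: expanding $[b,{}_n(du)]$ with $u\in N$ produces cross terms involving lower iterated commutators $[b,{}_j d]$ mixed with powers of $u$, and there is no nilpotency or commutativity assumption to kill them. The inner commutator $[b,{}_{n-1}(du)]$ is not constant on $Nd$, so you cannot reduce to a shorter nest; the ``cancellation'' you allude to simply is not available in a general profinite group. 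As written, the argument is a sketch that stops precisely where the content lies.

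The paper's proof sidesteps this difficulty entirely by a small but decisive change in the Baire setup: instead of covering $\langle a\rangle$ by the one-parameter family of closed sets $\{x:[b,{}_kx]=s_i\}$, it uses the \emph{two}-level sets
\[
T_{i,j,k}=\bigl\{x\in\langle a\rangle : [b,{}_kx]=s_i\ \text{and}\ [s_i,x]=s_j\bigr\}.
\]
These still cover $\langle a\rangle$ (for each $x$ take $k$ large enough that both $[b,{}_kx]$ and $[b,{}_{k+1}x]$ lie in the sink), and they are still countably many closed sets. When Baire delivers a coset $Nd\subseteq T_{i,j,k}$, the second condition gives $[s_i,nd]=s_j$ for all $n\in N$; the commutator identity $[s_i,nd]=[s_i,d][s_i,n]^{d}$ then yields $[s_i,n]=1$ for every $n\in N$, with no induction and no unwinding of the nested $k$ commutators. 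Since $\langle a\rangle/N$ is finite, some $a^{t}\in N$, and $[[b,{}_ka^{s}],a^{t}]=[s_i,a^{t}]=1$. The entire weight of the argument is carried by building the action of $x$ on $s_i$ directly into the closed sets before invoking Baire; your one-level cover does not record that information, and recovering it afterwards is exactly the step that fails.
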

\begin{proof}
Let    $\{s_1,s_2,\dots \}$ be a countable right Engel sink of $b$.
Consider the subsets
$$
T_{i,j,k}=\{x\in \langle a\rangle \mid [b,\,{}_kx]=s_i\text{ and }  [s_i,x]=s_j\}
$$
(where $\langle a\rangle$ is the procyclic subgroup generated by $a$). Note that each $T_{i,j,k}$ is a closed subset of $\langle a\rangle$.

By the definition of a right Engel sink, we have
$$
\langle a\rangle=\bigcup _{i,j,k}T_{i,j,k}.
$$
By Theorem~\ref{bct} some $T_{i,j,k}$ contains an open subset of $\langle a\rangle$, so $Nd\subset T_{i,j,k}$ for some open subgroup $N$ of $\langle a\rangle$ and some $d\in \langle a\rangle$. Since $\langle a\rangle/N$ is finite, we can assume that $d=a^{s}$ for some positive integer $s$.

Since $[s_i,nd]=s_j$ for all $n\in N$, it follows that $[s_i,N]=1$. Since $\langle a\rangle/N$ is finite, we have $a^{t}\in N$ for some positive integer $t$, so that $[s_i,a^{t}]=1$. As a result,
 \begin{align*}
 [[b,\,{}_ka^{s}],a^{t}]= [[b,\,{}_kd],a^{t}]
 &=[s_i,a^{t}]=1.\qedhere
 \end{align*}
\end{proof}

The bulk of the proof of Theorem~\ref{t2} is about the case where $G$ is a pro-$p$ group. First we
remind the reader of important Lie ring methods in the theory of pro-$p$ groups.

For a prime number $p $, the \textit{Zassenhaus $p $-filtration} of a group $G$ (also called the \textit{$p $-dimension series}) is defined by
$$
G_i=\langle g^{p ^k}\mid g\in \gamma _j(G),\;\, jp ^k\geqslant i\rangle \qquad\text{for}\quad i\in \N .
$$
This is indeed a \textit{filtration} (or an \textit{$N$-series}, or a \textit{strongly central series}) in the sense that
\begin{equation}\label{e-fil}
[G_i,G_j] \leqslant G_{i+j}\qquad \text{for all}\quad i, j.
 \end{equation}

 Then the Lie ring $D_p (G)$ is defined with the additive group
$$
D_p(G)=\bigoplus _{i}G_i/G_{i+1},
$$
where the  factors $Q_i=G_i/G_{i+1}$ are additively written. The Lie product is defined on homogeneous elements $xG_{i+1}\in Q_i$, $yG_{j+1}\in Q_j$ via the group commutators by
$$
[xG_{i+1},\, yG_{j+1}] = [x, y]G_{i+j+1}\in Q_{i+j}
$$
and extended to arbitrary elements of $D_p(G)$ by linearity. Condition~\eqref{e-fil} ensures that this  product is well-defined, and group commutator identities imply that $D_p(G)$ with these operations is a Lie ring. Since all the factors $G_i/G_{i+1}$ have prime exponent~$p $, we can view $D_p(G)$ as a Lie algebra over the field of $p $ elements $\mathbb{F}_p $. We denote  by $L_p (G)$ the subalgebra generated by the first factor $G/G_2$. (Sometimes, the notation $L_p (G)$ is used for $D_p (G)$.) If $u\in G_i\setminus G_{i+1}$, then we define $\delta  (u)=i$ to be the \textit{degree} of  $u$ with respect to the Zassenhaus filtration.

A group $G$ is said to satisfy a \textit{coset identity}  if there is a group word $w(x_1,\dots ,x_m)$,
elements $a_1,\dots,a_m$, and a subgroup $H \leq G$ such that $w(a_1h_1,\dots,a_mh_m) = 1$ for any $h_1,\dots,h_m\in H$.
 We shall use the following  result of Wilson and Zelmanov \cite{wi-ze} about coset identities.

\begin{theorem}[{Wilson and Zelmanov \cite[Theorem~1]{wi-ze}}]\label{t-coset}
If a group $G$ satisfies a coset identity on cosets of a subgroup of finite index, then for every prime $p$ the Lie algebra $L_p (G)$ constructed with respect to the Zassenhaus $p $-filtration satisfies a polynomial identity.
\end{theorem}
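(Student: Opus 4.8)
The plan is to transfer the coset identity on $G$ into a non-trivial polynomial identity on $L_p(G)$ by means of the Magnus embedding, realizing the desired Lie identity as a surviving homogeneous component of the coset identity. First I would normalize the setup. Replacing the finite-index subgroup $H$ by its normal core $\bigcap_{g\in G}H^{g}$, which is again of finite index, only shrinks the range of the elements $h_i$, so the coset identity $w(a_1h_1,\dots,a_mh_m)=1$ is inherited; hence I may assume $H\trianglelefteq G$ with $G/H$ finite. Note that the coset representatives $a_i$ cannot simply be removed, since the slots of $w$ range over cosets rather than over a subgroup, so they will have to be carried through the argument as fixed constants.

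The principal device is the Magnus map. Embedding the ambient free group into the algebra $A=\mathbb{F}_p\langle\langle X_1,X_2,\dots\rangle\rangle$ of formal power series in non-commuting indeterminates via $x\mapsto 1+X$, one checks (Jennings--Lazard) that the $i$-th term of the Zassenhaus $p$-filtration is carried into $1+\mathfrak{a}^{i}$, where $\mathfrak{a}$ is the fundamental ideal of $A$, that the associated graded of $A$ with respect to the $\mathfrak a$-adic filtration is the free associative algebra over $\mathbb{F}_p$, and that the graded Lie subalgebra generated by the degree-one component is the free restricted Lie algebra. In this way $L_p(G)$ is identified with a homomorphic image of the free restricted Lie algebra, so that any Lie polynomial identity of the latter which survives the homomorphism becomes an identity of $L_p(G)$.

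Next I would exploit that $w(a_1h_1,\dots,a_mh_m)$ is \emph{exactly} $1$ in $G$: its image in $A$ equals $1$, whence every homogeneous component of positive degree of that image vanishes. Expanding each factor $a_ih_i$ as a power series and collecting terms of a fixed degree $d$ produces, for each $d$, a homogeneous relation among the leading symbols of the $h_i$ (together with the fixed symbols of the $a_i$) that holds for all $h_i\in H$. Because $H$ has finite index, the leading symbols of elements of $H$ exhaust $L_p(G)$ in every sufficiently high degree, so these relations translate into Lie polynomial identities evaluated on $L_p(G)$.

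The step I expect to be the main obstacle is to guarantee that some such relation is a \emph{non-trivial} identity and to dispose of the coset constants. Non-triviality must be extracted from the fact that $w\neq 1$: after linearizing one passes to the lowest-degree homogeneous part contributed by the variables, using that the free associative algebra is a domain, so that the leading symbol arising from a non-trivial word cannot be annihilated by cancellation. The genuinely delicate point is that the images of the $a_i$ begin with $1$ plus terms of positive degree and therefore contaminate the low-degree components; by a careful degree count, by replacing the constant slots with suitable commutators, and by invoking that an identity with fixed coefficients drawn from the free associative algebra forces an ordinary polynomial identity, one should arrive at a bona fide non-trivial Lie polynomial identity satisfied by $L_p(G)$. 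This constant-elimination together with the non-vanishing analysis is the crux of the argument.
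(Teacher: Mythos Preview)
The paper does not contain a proof of this theorem: it is quoted verbatim as a result of Wilson and Zelmanov \cite[Theorem~1]{wi-ze} and is used as a black box (in Lemma~\ref{l-pi}). So there is no ``paper's own proof'' to compare your proposal against.

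That said, your outline does follow the general shape of the original Wilson--Zelmanov argument: pass to a normal finite-index subgroup, use the Magnus embedding into $\mathbb{F}_p\langle\langle X_1,X_2,\dots\rangle\rangle$, and read off a homogeneous identity from the vanishing of $w(a_1h_1,\dots,a_mh_m)-1$. Two points in your sketch would need more care. First, the assertion that ``the leading symbols of elements of $H$ exhaust $L_p(G)$ in every sufficiently high degree'' is not obvious for an abstract group $G$: finite index of $H$ does not immediately give $G_i\le H$ for large $i$, and in any case the filtration on $H$ induced from $G$ need not coincide with the intrinsic Zassenhaus filtration of $H$. In the Wilson--Zelmanov argument this is not how the finite-index hypothesis is used; rather, one works directly with the images of the $h_i$ in the first graded piece and linearizes. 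Second, the ``constant elimination'' you flag as the crux is indeed the heart of the matter, and your description (``replacing the constant slots with suitable commutators'' and invoking that an identity with fixed coefficients forces an ordinary PI) is too vague to count as a plan: one needs a concrete mechanism---in the original, a careful analysis of the minimal-degree homogeneous part of the Magnus expansion together with a standard linearization trick---to pass from an identity with parameters to a genuine polynomial identity of $L_p(G)$. Without that mechanism spelled out, the proposal is a correct strategic summary rather than a proof.
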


 Theorem~\ref{t-coset} was used in the proof of the above-mentioned theorem on profinite Engel groups, which we state here for convenience.

\begin{theorem}[{Wilson and Zelmanov \cite[Theorem~5]{wi-ze}}]\label{t-wz}
Every profinite Engel group is locally nilpotent.
\end{theorem}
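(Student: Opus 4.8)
The plan is to follow the Lie-theoretic strategy and reduce everything to a finitely generated pro-$p$ group, where the associated Lie algebra can be analysed. First I would reduce to the pro-$p$ case. Since the Engel condition passes to quotients, every finite continuous quotient of a profinite Engel group $G$ is a finite Engel group and hence nilpotent (Zorn's theorem), so $G$ is pronilpotent and splits as the Cartesian product $\prod_p G_p$ of its Sylow subgroups. Local nilpotency is a property of finitely generated closed subgroups, and such a subgroup $H$ is again profinite Engel, $H=\prod_p H_p$ with each $H_p$ a finitely generated pro-$p$ Engel group. Granting the pro-$p$ case, each $H_p$ is nilpotent, and a diagonal argument (if the classes $c_p$ were unbounded one could pick witnesses $x_p,g_p\in H_p$ and assemble elements $x=(x_p)$, $g=(g_p)$ violating the Engel condition) shows the classes are bounded, so $H$ is nilpotent. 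Thus it suffices to prove that a finitely generated pro-$p$ Engel group $G$, say generated by $g_1,\dots,g_d$, is nilpotent.

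For such $G$ I would pass to the Lie algebra $L_p(G)$ over $\mathbb{F}_p$ built from the Zassenhaus filtration $\{G_i\}$, which is generated by the images $\bar g_1,\dots,\bar g_d$ of the generators in $G/G_2$. The aim is to verify the hypotheses of Zelmanov's theorem: that $L_p(G)$ satisfies a polynomial identity, and that every Lie commutator in the $\bar g_i$ is ad-nilpotent; nilpotency of $L_p(G)$ then follows. To obtain the polynomial identity I would produce a coset identity. Applying the Baire Category Theorem (Theorem~\ref{bct}) to the compact group $G\times G$ and the closed sets $Y_n=\{(x,g):[x,\,{}_n g]=1\}$, which cover $G\times G$ by the Engel condition, one finds an $n$ for which $Y_n$ has nonempty interior, i.e. $[x_0h,\,{}_n(g_0k)]=1$ for all $h,k$ in some open (hence finite-index) subgroup $U$. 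This is a coset identity, so by Theorem~\ref{t-coset} the algebra $L_p(G)$ is PI.

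For the ad-nilpotency I would treat each Lie commutator separately. A Lie commutator in the $\bar g_i$ is the image of a group commutator $c=[g_{i_1},\dots,g_{i_k}]$, a single fixed element of $G$. Applying Theorem~\ref{bct} once more to the closed sets $\{y\in G:[y,\,{}_n c]=1\}$, which cover $G$ because $G$ is Engel, yields an $n$ and a coset $x_0H$ of an open subgroup $H$ on which the bounded relation $[x_0h,\,{}_n c]=1$ ($h\in H$) holds. The key technical step is then a lemma, standard in this circle of ideas, asserting that boundedness of the Engel degree of $c$ on a coset of a finite-index subgroup forces the image $\bar c$ to be ad-nilpotent in $L_p(G)$: the finite index lets one control the action of $\operatorname{ad}\bar c$ on every homogeneous component, using the interplay between the filtration, the $p$-power maps, and the bounded relation. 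Granting this, every commutator in the generators is ad-nilpotent, Zelmanov's theorem applies, and $L_p(G)$ is nilpotent.

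Finally I would descend from the Lie algebra to the group: for a finitely generated pro-$p$ group, nilpotency of $L_p(G)$ implies that $G$ is nilpotent, by the standard Lazard-type correspondence between a pro-$p$ group and its Zassenhaus Lie algebra (using $\gamma_i(G)\le G_i$ and $\bigcap_i G_i=1$). This gives $G$ nilpotent, hence the original profinite Engel group is locally nilpotent. I expect the main obstacle to be the ad-nilpotency step --- bridging the merely pointwise (unbounded) group Engel condition to the \emph{uniform} ad-nilpotency required by Zelmanov's theorem --- since this is exactly where the non-uniformity of the Engel condition must be tamed, via Baire category on each fixed commutator followed by the finite-index ad-nilpotency lemma. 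The reduction to finitely generated pro-$p$ groups and the final Lie-to-group passage are comparatively routine, relying on well-known structural facts.
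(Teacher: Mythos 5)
Your overall architecture --- Zorn to get pronilpotency, reduction to finitely generated pro-$p$ groups, Baire category (Theorem~\ref{bct}) applied twice to extract a coset identity (hence a PI for $L_p(G)$ via Theorem~\ref{t-coset}) and a bounded Engel relation on a coset of an open subgroup for each fixed commutator $c$ (hence ad-nilpotency of $\bar c$), then Zelmanov's Theorem~\ref{tz} --- is indeed the Wilson--Zelmanov route; note the paper does not prove this theorem but cites it, and its closest in-house analogue is Proposition~\ref{pr-pro-p}, whose Lemmas~\ref{l-ad} and~\ref{l-pi} carry out exactly your two Baire steps (in a harder setting, with powers $a^{p^l},a^{p^m}$ that disappear in the genuine Engel case). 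However, your final descent step is false as stated: for a finitely generated pro-$p$ group, nilpotency of $L_p(G)$ does \emph{not} imply nilpotency of $G$, and the justification you offer ($\gamma_i(G)\le G_i$ and $\bigcap_i G_i=1$) yields only residual nilpotence. Counterexample: $G=\langle a\rangle\ltimes\langle b\rangle\cong \Z_p\ltimes\Z_p$ with $b^a=b^{1+p}$, $p$ odd (for $p=2$ take $b^a=b^5$). Here $[a,b]=b^p\in G_p\subseteq G_3$, so $[\bar a,\bar b]=0$ in degree $2$; hence $L_p(G)$, being the subalgebra generated by $\bar a,\bar b$, is abelian, while $\gamma_i(G)=\overline{\langle b^{p^{i-1}}\rangle}\ne 1$ for all $i$, so $G$ is residually nilpotent but not nilpotent. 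The correct consequence of nilpotency of $L_p(G)$ is Lazard's: $G$ is $p$-adic analytic (as the paper records, citing \cite{laz} and \cite[Corollary~D]{sha}), and one must then finish differently: Wilson and Zelmanov conclude via linearity of analytic groups together with the theorem of Gruenberg and Garashchuk--Suprunenko that Engel linear groups are locally nilpotent, while the paper's Proposition~\ref{pr-pro-p} instead invokes Breuillard--Gelander \cite{br-ge} to get solubility from the coset identity and then inducts on derived length. Your proposal as written never uses the analyticity and so cannot close this gap.

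A second, smaller gap is the diagonal argument assembling the Sylow components: unbounded nilpotency classes $c_p$ do not supply pairs $(x_p,g_p)$ with unbounded Engel lengths --- a group of very large class can be $k$-Engel for small $k$ --- so your assembled pair $(x,g)$ need not violate the Engel condition. What the diagonal argument honestly yields (choosing alleged witnesses in distinct Sylow components) is that the Engel lengths inside the $H_p$ are uniformly bounded, i.e.\ $H$ is $k$-Engel for some $k$; to conclude that $H=\prod_p H_p$ is nilpotent one still needs the classes of the $d$-generated $k$-Engel pro-$p$ groups $H_p$ to be bounded independently of $p$, which requires rerunning the Lie-algebra argument uniformly in $p$ (the linearized $k$-Engel identity as a characteristic-free PI, the uniform ad-nilpotency index $k$, and a quantitative form of Theorem~\ref{tz}). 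This step is repairable, unlike the Lie-to-group descent, but it is not the one-line observation your sketch makes it out to be.
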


The proof of Theorem~\ref{t-wz} was based on the following deep result of Zelmanov \cite{ze92,ze95,ze17}, which is also used in our paper.

\begin{theorem}[{Zelmanov \cite{ze92,ze95,ze17}}]\label{tz}
 Let $L$ be a Lie algebra over a field and suppose that $L$ satisfies
a polynomial identity. If $L$ can be generated by a finite set $X$ such that every
commutator in elements of $X$ is ad-nilpotent, then $L$ is nilpotent.
\end{theorem}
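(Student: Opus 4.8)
The plan is to prove nilpotency by Zelmanov's \emph{sandwich technique}: one converts the ad-nilpotency hypothesis into the existence of sandwich elements and then exploits the local nilpotency of the ideal they generate. Before entering the combinatorics I would make two standard preliminary reductions. Since nilpotency of a Lie algebra is both preserved and reflected under extension of the ground field, I would replace the base field by an infinite (indeed algebraically closed) extension, so that linearization arguments become available; the polynomial identity, the finite generating set $X$, and the ad-nilpotency of all commutators in $X$ are all inherited by $L \otimes \bar{k}$. A Vandermonde--linearization argument then upgrades ``every commutator in $X$ is ad-nilpotent'' to a \emph{uniform} bound on the nilpotency index of $\operatorname{ad} u$ as $u$ runs over commutators in the generators, which is what makes the subsequent analysis tractable.

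The central notion is that of a \emph{sandwich}: a nonzero $c\in L$ with $\operatorname{ad}(c)^2=0$ and $\operatorname{ad}(c)\operatorname{ad}(x)\operatorname{ad}(c)=0$ for all $x\in L$. The argument rests on two pillars. Pillar (i) is the Kostrikin--Zelmanov theorem that a Lie algebra generated by finitely many sandwiches is nilpotent, of class bounded in terms of the number of sandwiches; consequently the ideal generated by any single sandwich, and more generally by any set of sandwiches, is locally nilpotent. Pillar (ii) is the existence lemma: every \emph{nonzero} finitely generated Lie algebra satisfying a polynomial identity, generated by elements all of whose commutators are ad-nilpotent, contains a nonzero sandwich.

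Granting (i) and (ii), the theorem follows from a radical argument. By (i) the ideal generated by any sandwich is locally nilpotent, so every sandwich of $L$ lies inside the locally nilpotent radical $R$ (the maximal locally nilpotent ideal, whose good behaviour in the PI setting is part of the theory). Hence $L/R$ contains no nonzero sandwich, since such a sandwich would generate a nonzero locally nilpotent ideal, contradicting the triviality of the radical of $L/R$. But $L/R$ is again finitely generated, satisfies the same polynomial identity, and has all commutators in the images of $X$ ad-nilpotent; were it nonzero, (ii) would produce a sandwich in it. Therefore $L/R=0$, so $L=R$ is locally nilpotent, and being finitely generated it is nilpotent.

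The main obstacle is pillar (ii): extracting an honest sandwich from the bare ad-nilpotency of the generators under a polynomial identity. This is the deep combinatorial core of Zelmanov's work. The strategy is to work inside the free PI Lie algebra and use the identity together with the uniform ad-nilpotency to force relations of the form $\operatorname{ad}(c)^2=0$ among the ad-operators, and then to sharpen such ``thin'' elements into genuine sandwiches by iterated substitution and linearization. Establishing pillar (i), the nilpotency of algebras generated by sandwiches, is Kostrikin's original contribution as refined by Zelmanov; it is comparatively self-contained but still far from routine.
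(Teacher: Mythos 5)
You cannot be faulted for not matching ``the paper's own proof'' here, because the paper has none: Theorem~\ref{tz} is imported verbatim as a black box from Zelmanov's works \cite{ze92,ze95,ze17}, and its proof (spanning long and famously difficult papers, closely tied to the solution of the restricted Burnside problem) is nowhere reproduced. Your sketch does correctly identify the architecture of Zelmanov's actual argument --- sandwiches, the Kostrikin--Zelmanov theorem on sandwich algebras, and a locally nilpotent radical argument --- so as a roadmap it is faithful. But as a proof it has a genuine gap of the most basic kind: your two ``pillars'' jointly \emph{are} the theorem. Pillar (ii), the extraction of a nonzero sandwich from a PI plus ad-nilpotent generators, is the deep combinatorial core, and your description of it (``force relations of the form $\mathrm{ad}(c)^2=0$ \dots then sharpen by iterated substitution and linearization'') is a gesture at, not a rendering of, hundreds of pages of work; citing it as a lemma makes the argument circular relative to the papers being cited.

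Beyond that structural issue, several of your bridging steps would fail as written. First, the Vandermonde linearization does not yield a \emph{uniform} ad-nilpotency bound: $X$ is finite but the set of commutators in $X$ is infinite, and linearization controls each $\mathrm{ad}(u)$ individually; uniformity is itself a nontrivial output of the PI analysis, not an input. Second, your claim that the ideal generated by a sandwich is locally nilpotent ``consequently'' from pillar (i) is a leap: the ideal generated by a sandwich $c$ is not visibly generated by sandwiches, since for a sandwich $c$ one computes $\mathrm{ad}([c,x])^2=-\,\mathrm{ad}(c)\,\mathrm{ad}(x)^2\,\mathrm{ad}(c)$, which the sandwich conditions do not kill; passing from ``$L$ contains a sandwich'' to ``$L$ contains a nonzero locally nilpotent ideal'' requires a separate argument in Zelmanov's treatment. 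Third, the radical step needs both that a largest locally nilpotent ideal $R$ exists (sums of locally nilpotent ideals being locally nilpotent) and that $L/R$ has trivial locally nilpotent radical; since an extension of a locally nilpotent Lie algebra by a locally nilpotent one need not be locally nilpotent, neither assertion is free, and your parenthetical ``good behaviour in the PI setting is part of the theory'' again defers the work to the source you are supposed to be proving. In short: correct strategy, correctly attributed, but the proposal proves nothing that the citation did not already assert.
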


We now consider pro-$p$ groups with countable right Engel sinks.

\begin{proposition}
\label{pr-pro-p}
Suppose that $P$ is a finitely generated pro-$p$ group in which every element  has a countable right Engel sink. Then $P$ is nilpotent.
\end{proposition}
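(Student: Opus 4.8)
The plan is to establish nilpotency via the Lie ring machinery, using the Wilson–Zelmanov coset-identity theorem and Zelmanov's theorem on Lie algebras with a polynomial identity generated by ad-nilpotent elements. First I would exploit the countable right Engel sink condition together with the Baire Category Theorem to extract a quantitative Engel-type relation: Lemma~\ref{l-ksm} already gives, for any $a,b\in P$, integers $k,s,t$ with $[[b,\,{}_ka^{s}],a^{t}]=1$. The key observation is that passing to a suitable open subgroup or a suitable power lets us uniformize these parameters, yielding a coset identity. Concretely, I would argue that $P$ satisfies an identity of the form $[[y,\,{}_kx^{s}],x^{t}]=1$ on cosets of an open (hence finite-index) subgroup, by partitioning $P\times P$ into countably many closed pieces indexed by the sink elements and the integers $k,s,t$, and invoking Theorem~\ref{bct} to find one piece with nonempty interior.

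Once a coset identity is in hand, Theorem~\ref{t-coset} guarantees that the Lie algebra $L_p(P)$ over $\F_p$ satisfies a polynomial identity. The next task is to verify the hypotheses of Zelmanov's Theorem~\ref{tz}: since $P$ is finitely generated, $L_p(P)$ is generated by finitely many elements coming from generators of $P/P_2$, and I must show that every Lie commutator in these generators is ad-nilpotent. This is where the Engel-like relation is translated into the Lie setting. The relation $[[b,\,{}_ka^{s}],a^{t}]=1$ in the group, read off through the Zassenhaus filtration, should force the corresponding homogeneous Lie elements to satisfy an ad-nilpotency condition: roughly, $\operatorname{ad}$ of the image of $a$ (or of $a^{s}$) annihilates the image of $b$ after boundedly many applications, modulo the higher terms of the filtration. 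Making this precise — controlling how the mixed powers $a^{s}$, $a^{t}$ and the variable exponents interact with degrees in the filtration, so that one genuinely obtains ad-nilpotence of each commutator rather than merely of the generators — is the main obstacle. The difficulty is compounded by the fact that the sink is merely countable, so $k,s,t$ are not bounded a priori across different pairs $a,b$; the Baire argument must be leveraged to produce bounds valid on a finite-index subgroup.

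Having verified that $L_p(P)$ satisfies a polynomial identity and is generated by finitely many elements all of whose commutators are ad-nilpotent, Theorem~\ref{tz} yields that $L_p(P)$ is nilpotent. I would then transfer nilpotency of the Lie algebra back to the group: for a finitely generated pro-$p$ group, nilpotency of $L_p(P)$ (or the associated graded structure being nilpotent) implies that $P$ has finite rank / is $p$-adic analytic, or more directly that $P$ itself is nilpotent, via the standard correspondence between the lower central / Zassenhaus filtration of $P$ and the grading of $L_p(P)$. The cleanest route is to observe that nilpotency of $L_p(P)$ bounds the length of the Zassenhaus filtration, whence $\gamma_\infty(P)=1$ and $P$, being pronilpotent and finitely generated, is nilpotent.

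I expect the crux to lie in the second paragraph: converting the countable-sink hypothesis into a \emph{uniform} ad-nilpotency statement for \emph{all} Lie commutators in the generators. The relation from Lemma~\ref{l-ksm} controls only a single iterated commutator $[[b,\,{}_ka^{s}],a^{t}]$, and the shift from the group exponents $s,t$ to degrees in the Lie algebra, together with the need to handle arbitrary commutators (not just those of the form $\operatorname{ad}(a)^k b$), is precisely the part that the author flags as requiring ``substantial efforts'' in the finitely-generated pro-$p$ case.
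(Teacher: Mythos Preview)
Your overall strategy---Baire category to produce a coset identity, Wilson--Zelmanov's Theorem~\ref{t-coset} for a polynomial identity on $L_p(P)$, verification of ad-nilpotency of commutators in the generators, and then Zelmanov's Theorem~\ref{tz} to conclude that $L_p(P)$ is nilpotent---is exactly the paper's approach, and your identification of the ad-nilpotency verification as the technical crux is accurate.

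The genuine gap is in your final step. Nilpotency of $L_p(P)$ does \emph{not} bound the length of the Zassenhaus filtration or force $\gamma_\infty(P)=1$. For a concrete counterexample take $p\geq 3$ and $P=\mathbb Z_p\rtimes\mathbb Z_p$, where a generator $t$ of the second factor acts on a generator $u$ of the first by $u^t=u^{1+p}$. Then $[u,t]=u^p$, and since $u^p\in P^p\subseteq P_p\subseteq P_3$, the image of $[u,t]$ in $P_2/P_3$ vanishes; hence $L_p(P)$ is abelian. Yet $\gamma_n(P)=u^{p^{n-1}\mathbb Z_p}\ne 1$ for all $n$, so $P$ is not nilpotent. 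Thus there is no ``standard correspondence'' delivering nilpotency of $P$ from nilpotency of $L_p(P)$, and your proposed ``cleanest route'' fails.

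What nilpotency of $L_p(P)$ \emph{does} give, via Lazard, is that $P$ is $p$-adic analytic---you mention this option but do not pursue it, and by itself it is still far from nilpotency. The paper closes the gap as follows: since $P$ already satisfies a coset identity, the theorem of Breuillard and Gelander on $p$-adic analytic groups with a coset identity forces $P$ to be \emph{soluble}. One then argues by induction on the derived length: if $U\lhd P$ is abelian with $P/U$ nilpotent, it suffices to show each $a\in P$ is Engel in $U\langle a\rangle$. But $U\langle a\rangle$ is metabelian, so by Lemma~\ref{l-metab} countable right Engel sinks yield countable \emph{left} Engel sinks, and Theorem~\ref{t-left} makes $U\langle a\rangle$ nilpotent. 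Thus $P$ is Engel, and Wilson--Zelmanov's Theorem~\ref{t-wz} finishes the proof. This detour through solubility and the metabelian/left-sink reduction is not optional; it is precisely the ``substantial effort'' alluded to in the introduction.
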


\begin{proof}
Our immediate aim is an application of
Theorem~\ref{tz} to the Lie algebra $L_p(P)$ of $P$, which will show that $L_p(P)$ is nilpotent. We need to verify that the conditions of that theorem are satisfied.

\begin{lemma}\label{l-ad}
The Lie algebra $L_p(P)$ is generated by finitely many elements all commutators in which are ad-nilpotent.
\end{lemma}

\begin{proof}
The image of the finite generating set  of $P$ in the first homogeneous component of the Lie algebra $L_p(P)$ is a finite set of generators of $L_p(P)$.  We claim that all commutators in these generators  are ad-nilpotent.
In fact, we prove that every homogeneous element $\bar a$ of $L_p(P)$
is ad-nilpotent. We may assume that $\bar a$ is the image of an element $a\in P$ in the corresponding factor  $P_{\delta (a)}/P_{\delta (a)+1}$ of the Zassenhaus filtration, where $\delta (a)$ is the degree of $a$. We fix the notation $a$ and $\bar a$ for the rest of the proof of this lemma.

For our $a\in P$, we consider the sets
$$
U_{k,s,t}=\{x\in P\mid  [[x,\,{}_ka^{s}],a^{t}]=1\}, \qquad k,s,t\in \N.
$$
Each set $U_{k,s,t}$ is closed, and
$$
P=\bigcup_{k,s,t}U_{k,s,t}
$$
by Lemma~\ref{l-ksm}. Therefore by Theorem~\ref{bct} some $U_{k,s,t}$ contains a coset $Nd$ of an open normal subgroup $N$ of $P$. We obtain that
\begin{equation}\label{e-ndpsm}
  [[nd,\,{}_ka^{s}],a^{t}]=1\qquad \text{for all}\quad n\in N.
\end{equation}
We are going to derive from this equation the desired ad-nilpotency of $\bar a$ in $L_p(P)$.

Let $s=s_1p^l$ and $t=t_1p^m$ for $s_1, t_1$ coprime to $p$. Since $\langle a^t\rangle=\langle a^{s_1p^m}\rangle$, we can replace $t$ with $s_1p^m$ in \eqref{e-ndpsm}, so that
\begin{equation}\label{e-ndpsm2}
  [[nd,\,{}_ka^{s_1p^l}],a^{s_1p^m}]=1\qquad \text{for all}\quad n\in N,
\end{equation}
where $(s_1,p)=1$. Since the image $\overline{a^{s_1}}$ of $a^{s_1}$ in  $P_{\delta (a)}/P_{\delta (a)+1}$ is equal to $s_1\bar a$ and $s_1$ is coprime to the characteristic $p$ of the ground field of $L_p$, it is sufficient to prove that $\overline{a^{s_1}}$ is ad-nilpotent. Replacing $a$ with $a^{s_1}$ we change  notation in \eqref{e-ndpsm2}, so that we have
\begin{equation}\label{e-ndpsm3}
  [[nd,\,{}_ka^{p^l}],a^{p^m}]=1\qquad \text{for all}\quad n\in N.
\end{equation}

For generators $x,y,z,h$ of a free group we write
$$
[[xy,\,{}_k z],h]=[[x,\,{}_k z],h]\cdot [[y,\,{}_k z],h]\cdot v(x,y,z,h),
$$
where the word $v(x,y,z,h)$ is a product of commutators of weight at least $k + 3$, each of which involves $x$, $y$, $h$ and involves $z$ at least $k$ times. Substituting $x=n$, $y=d$, $z=a^{p^l}$, and $h=a^{p^m}$ and using \eqref{e-ndpsm3} we obtain that
$$
 [[n,\,{}_ka^{p^l}],a^{p^m}]=v(n,d,a^{p^l},a^{p^m})^{-1}
\qquad \text{for all}\;\, n\in N.
$$
If $|P/N|=p^r$, then for any $g\in P$ we have $[g,\,{}_ra^{p^l}]\in N$, so that we also have
\begin{equation}\label{eq-ad3}
[[g,\,{}_{k+r}a^{p^l}],a^{p^m}]=v([g,\,{}_ra^{p^l}],d,a^{p^l},a^{p^m})^{-1}.
\end{equation}
We claim that $\bar a$ is ad-nilpotent in $L_p(P)$ of index $(k+r)p^l+p^m$.

Recall that $\delta  (u)$ denotes the degree of an element $u \in P$ with respect to the Zassenhaus filtration.  It is well known that
\begin{equation}\label{e-pd}
u^p\in P_{p\delta (u)}.
\end{equation}
  Furthermore, in $L_p(P)$ for the images of $u$ and $u^p$ in  $P_{\delta (u)}/P_{\delta (u)+1}$ and $P_{p\delta (u)}/P_{p\delta (u)+1}$, respectively, we have
\begin{equation}\label{e-pd2}
[x, \bar{u^p}]=[x,\,{}_p \bar{u}]
\end{equation}
(see, for example, \cite[Ch.~II, \S\,5, Exercise~10]{bou}).

By  \eqref{e-pd}  the degree of $v([g,\,{}_ra^{p^l}],d,a^{p^l},a^{p^m})$ on the right of~\eqref{eq-ad3} is at least $\delta (d)+\delta (g)+((k+r)p^l+p^m)\delta (a)$,
which is   strictly greater than $w=\delta (g)+((k+r)p^l+p^m)\delta (a)$. This means that the image of the right-hand side of \eqref{eq-ad3}  in $P_w/P_{w+1}$ is trivial. At the same time, by \eqref{e-pd2} the image of the left-hand side of \eqref{eq-ad3}  in $P_w/P_{w+1}$ is equal to the image of $[g,\,{}_{(k+r)p^l+p^m} a]$ in $P_w/P_{w+1}$, which is in turn equal to the  element    $[\bar g,\,{}_{(k+r)p^l+p^m}\bar a]$ in $L_p(P)$.
 Thus, for the corresponding homogeneous elements of $L_p(P)$ we have
 $$
 [\bar g,\,{}_{(k+r)p^l+p^m}\bar a]=0.
$$
 Since here $\bar g$ can be any homogeneous element, this means that $\bar a$ is ad-nilpotent of index $(k+r)p^l+p^m$, as claimed.
\end{proof}

\begin{lemma}\label{l-pi}
The Lie algebra $L_p(P)$ satisfies a polynomial identity.
\end{lemma}

\begin{proof}
Consider the subsets of the direct product $P\times P$
$$
V_{k,s,t}=\{(x,y)\in P\times P\mid  [[x,\,{}_ky^{s}],y^{t}]=1\}, \qquad k,s,t\in\N.
$$
Note that each subset $V_{k,s,t}$ is closed in the product topology of $P\times P$.
  By Lemma~\ref{l-ksm} we have
  $$
  P\times P=\bigcup_{k,s,t}V_{k,s,t}.
  $$
By Theorem~\ref{bct} one of the sets $V_{k,s,t}$ contains an open subset of $P\times P$. This means that there are cosets $aN$ and $bN$ of an open normal subgroup $N$ of $P$ and positive integers $k,s,t$  such that
\begin{equation*}
[[x,\,{}_ky^{s}],y^{t}]=1\qquad \text{ for any }x\in aN,\; y\in bN.
\end{equation*}
Thus, $P$ satisfies  a coset identity on cosets of a subgroup of finite index and therefore the Lie algebra $L_p(P)$ satisfies a polynomial identity by Theorem~\ref{t-coset}.
\end{proof}

We can now finish the proof of Proposition~\ref{pr-pro-p}. By Lemmas~\ref{l-ad} and \ref{l-pi} the Lie algebra  $L_p(P)$ satisfies the hypotheses of Theorem~\ref{tz}, by which  $L_p(P)$ is nilpotent. The nilpotency of the Lie algebra $L_p (P)$  of the finitely generated pro-$p $ group $P$  implies that $P$ is a $p $-adic analytic group. This result goes back to Lazard~\cite{laz}; see also \cite[Corollary~D]{sha}.
Furthermore, by a theorem of Breuillard and Gelander \cite[Theorem~8.3]{br-ge}, a  $p $-adic analytic group satisfying a coset identity on cosets of a subgroup of finite index is soluble.

Thus, $P$ is soluble, and we prove that $P$ is nilpotent by induction on the derived length of $P$.  By induction hypothesis, $P$ has an abelian normal subgroup $U$ such that $P/U$ is  nilpotent. We aim to show that $P$ is an Engel group. Since $P/U$ is nilpotent, it is sufficient to show that every element $a\in P$ is an Engel element in the product $U\langle a\rangle$. Since this product is a metabelian group, all of its elements also have countable left Engel sinks by Lemma~\ref{l-metab} and then  $U\langle a\rangle$ is nilpotent by Theorem~\ref{t-left}.

Thus, $P$ is an Engel group and therefore, being a finitely generated pro-$p$ group,  $P$ is nilpotent by Theorem~\ref{t-wz}.
\end{proof}

We now consider the general case of a  pronilpotent group.

\begin{proof}[Proof of Theorem~\ref{t2}] Let $G$ be a pronilpotent group in which every element has a countable right Engel sink; we need to prove that $G$ is locally nilpotent. By Theorem~\ref{t-wz},  it is sufficient to prove that $G$ is an Engel group.

For each prime~$p$, let $G_p$ denote the Sylow $p$-subgroup of $G$, so that $G$ is a Cartesian product of the~$G_p$, since $G$ is pronilpotent. Given any two elements $a,g\in G$, we write $g=\prod _pg_p$ and $a=\prod _pa_p$,  where $a_p,g_p\in G_p$. Clearly, $[g_q,a_p]=1$ for $q\ne p$.
   We need to show that $[g,\,{}_ma]=1$ for some positive integer $m=m(a,g)$.

  Let    $\{s_1,s_2,\dots \}$ be a countable right Engel sink of $g$.
Consider the subsets
$$
T_{i,j,k}=\{x\in \langle a\rangle \mid [g,\,{}_kx]=s_i\text{ and }  [s_i,x]=s_j\}
$$
(where $\langle a\rangle$ is the procyclic subgroup generated by $a$). Note that each $T_{i,j,k}$ is a closed subset of $\langle a\rangle$.
By the definition of a right Engel sink, we have
$$
\langle a\rangle=\bigcup _{i,j,k}T_{i,j,k}.
$$
By Theorem~\ref{bct} some $T_{i,j,k}$ contains an open subset of $\langle a\rangle$, so $Nd\subset T_{i,j,k}$ for some open subgroup $N$ of $\langle a\rangle$ and some $d\in \langle a\rangle$, so that
\begin{equation*}
[g,\,{}_kx]=s_i\quad\text{and}\quad  [s_i,x]=s_j \qquad\text{for any }  x\in Nd.
\end{equation*}
 Since $\langle a\rangle/N$ is finite, we can assume that $d=a^{s}$ for some positive integer $s$. Since $[s_i,nd]=s_j$ for all $n\in N$, it follows that $[s_i,N]=1$. Thus,
\begin{equation}\label{e-nn}
[[g,\,{}_k(n_1a^s)],n_2]=1\qquad\text{for any}\quad  n_1,n_2\in N.
\end{equation}

 Let $\sigma=\pi ( |\langle a\rangle/N|)$ be the (finite) set of prime divisors of the order of $\langle a\rangle/N$. Then $a_q\in N$ for any $q\not\in \sigma$.
Choosing $n_1=a_q^{1-s}$ and $n_2=a_q$ in \eqref{e-nn} we obtain for the $q$-components
\begin{equation}
\label{e-qk}
 [g_q,\,{}_{k+1}a_q]= [[g_q,\,{}_ka_q],a_q]=1\qquad\text{for any}\quad  q\not\in \sigma.
\end{equation}

 For every prime $p$ the group $G_p$ is locally nilpotent by Proposition~\ref{pr-pro-p},  so there is a positive integer $k_p$ such that $[g_p,\,{}_{k_p}a_p]=1$.  Now for
 $$
 m=\max\{k+1, \,\max_{p\in \sigma } \{k_p\}\}
 $$
 in view of \eqref{e-qk} we have $[g_p,\,{}_{m}a_p]=1$ for all $p$, which means that  $[g,\,{}_{m}a]=1$.
 Thus, $G$ is an Engel group and therefore it is locally nilpotent by
 Theorem~\ref{t-wz}.
\end{proof}

\section{Prosoluble groups}

Any profinite group $G$ has the largest normal pronilpotent subgroup $F(G)$,  called the Fitting subgroup of $G$. Further terms of the Fitting series are defined by induction: $F_1(G)=F(G)$ and $F_{i+1}(G)$ is the inverse image of $F(G/F_i(G))$. By definition a group has finite Fitting height if $F_k(G)=G$ for some $k\in {\mathbb N}$.

Recall that by Theorem~\ref{t2} any pronilpotent group with countable right Engel sinks is locally nilpotent. Therefore, if $G$ is a profinite group with countable right Engel sinks, then the Fitting subgroup $F(G)$  is  locally nilpotent.

The proof of the main Theorem~\ref{t-main} for a prosoluble group $G$ with countable right Engel sinks will follow from a key proposition stating that $F(G)\ne 1$. We approach the proof of this proposition in a number of steps. First we list several profinite  analogues of the properties of coprime automorphisms of finite groups, which are used in this section in relation to Engel sinks.

 If $\varphi$  is an automorphism of a finite group $H$ of coprime order, that is, such that $(|\varphi |,|H|)=1$, then  we say for brevity that $\varphi$ is a coprime automorphism of~$H$. This definition is extended to profinite groups as follows.
We say that $\varphi$ is a \textit{coprime automorphism}  of a profinite group $H$  meaning that a procyclic group $\langle\varphi\rangle$ faithfully acts on $H$ by continuous automorphisms   and $\pi (\langle \varphi\rangle)\cap \pi (H)=\varnothing$. Since the semidirect product $H\langle \varphi\rangle$ is also a profinite group, $\varphi$ is a coprime automorphism of $H$
 if and only if for every open normal $\varphi$-invariant subgroup $N$ of $H$ the automorphism (of finite order) induced by $\varphi$ on $H/N$ is a coprime automorphism.
The following lemma is derived from an analogue of the Schur--Zassenhaus theorem for profinite groups; we shall freely use this fact without special references.

\begin{lemma}[{see \cite[Lemma~4.1]{khu-shu191}}]\label{l-inv}
If $\varphi$ is a coprime automorphism of a profinite group $G$, then for every prime $q\in \pi (G)$ there is a $\varphi$-inva\-ri\-ant Sylow $q$-subgroup of $G$. If $G$ is in addition prosoluble, then for every subset $\sigma\subseteq \pi (G)$ there is a $\varphi$-invariant Hall $\sigma$-subgroup of~$G$.
\end{lemma}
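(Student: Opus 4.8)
The plan is to transcribe the classical coprime-action theory for finite groups to the profinite setting by an inverse-limit argument. The key observation is that the open normal $\varphi$-invariant subgroups of $G$ form a base of neighbourhoods of the identity: since $H=G\langle\varphi\rangle$ is itself a profinite group, the intersections with $G$ of the open normal subgroups of $H$ provide such a base, and every subgroup so obtained is $\varphi$-invariant. For each such $N$ the automorphism that $\varphi$ induces on the finite group $G/N$ has finite order coprime to $|G/N|$, placing us in the finite coprime situation.

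First I would record the finite case: if a finite group $A$ acts coprimely on a finite group $K$, then for every prime $q$ there is an $A$-invariant Sylow $q$-subgroup of $K$, and if $K$ is soluble then for every set of primes $\sigma$ there is an $A$-invariant Hall $\sigma$-subgroup. This is standard. Form the semidirect product $\Gamma=K\rtimes A$, in which $K$ is a normal Hall subgroup. For a Sylow $q$-subgroup $Q$ of $K$ (respectively a Hall $\sigma$-subgroup, which exists by Hall's theorem in the soluble case) the Frattini argument gives $\Gamma=K\,N_\Gamma(Q)$, so $|N_\Gamma(Q):N_K(Q)|=|A|$ is coprime to $|N_K(Q)|$; a Schur--Zassenhaus complement to $N_K(Q)$ in $N_\Gamma(Q)$ is then a complement to $K$ in $\Gamma$, hence conjugate to $A$ by some $g\in K$, and consequently a suitable $K$-conjugate of $Q$ is $A$-invariant.

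Next I would pass to the limit. Let $\mathscr S_N$ denote the (finite, non-empty) set of $\varphi$-invariant Sylow $q$-subgroups of $G/N$, as $N$ runs over the open normal $\varphi$-invariant subgroups. For $M\leq N$ the projection $\pi_{M,N}\colon G/M\to G/N$ commutes with $\varphi$ and carries Sylow $q$-subgroups onto Sylow $q$-subgroups, hence induces a map $\mathscr S_M\to\mathscr S_N$; this map is surjective, for one may lift a given $\varphi$-invariant Sylow $q$-subgroup of $G/N$ to its ($\varphi$-invariant) preimage in $G/M$ and apply the finite case inside that preimage. Thus $\{\mathscr S_N\}$ is a surjective inverse system of non-empty finite sets, so by compactness it has a non-empty inverse limit, i.e.\ a compatible family $(Q_N)_N$. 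Writing $\pi_N\colon G\to G/N$ for the projections, the subgroup $Q=\bigcap_N\pi_N^{-1}(Q_N)$ is a closed $\varphi$-invariant subgroup with $\pi_N(Q)=Q_N$ for all $N$; being an inverse limit of finite $q$-groups it is pro-$q$, and since every index $|G/N:Q_N|$ is coprime to $q$ it is a Sylow $q$-subgroup of $G$. The Hall $\sigma$-subgroup assertion for prosoluble $G$ follows verbatim, replacing ``Sylow $q$-subgroup'' by ``Hall $\sigma$-subgroup'' throughout and using that each finite quotient $G/N$ is soluble.

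The only point that needs genuine care is the interplay between the finite theory and the limit: that the transition maps $\mathscr S_M\to\mathscr S_N$ are surjective, and that the resulting pro-$q$ subgroup $Q$ is a full Sylow $q$-subgroup rather than merely a pro-$q$ subgroup. Both reduce to the finite coprime result applied inside preimages, together with the elementary fact that a pro-$q$ subgroup whose image in every finite quotient is Sylow is itself Sylow; everything else is a routine transcription of the finite theory through the open normal $\varphi$-invariant subgroups.
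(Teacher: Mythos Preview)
Your proof is correct and follows the standard inverse-limit argument via the Schur--Zassenhaus theorem and compactness of inverse systems of nonempty finite sets. Note, however, that the present paper does not prove this lemma at all: it is quoted from \cite[Lemma~4.1]{khu-shu191} without proof, so there is no in-paper argument to compare against; your approach is precisely the expected one for such a result.
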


The following lemma is a special case of \cite[Proposition~2.3.16]{rib-zal}.

\begin{lemma} \label{l-cover}
If $\varphi $ is a coprime automorphism of a profinite group $G$ and $N$ is a $\varphi$-inva\-ri\-ant closed normal subgroup of $G$, then every fixed point of $\varphi$ in $G/N$ is an image of a fixed point of $\varphi$ in $G$, that is, $C_{G/N}(\varphi)=C(\varphi )N/N$.
\end{lemma}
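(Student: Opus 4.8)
The plan is to prove the two inclusions separately. The inclusion $C_G(\varphi)N/N\le C_{G/N}(\varphi)$ is immediate, since the image of a fixed point is a fixed point. The substance is the reverse inclusion: every $\varphi$-fixed coset $gN$ must contain a genuine $\varphi$-fixed point. I would first settle this in the finite case and then lift to the profinite setting by a compactness/inverse-limit argument.

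For the finite case, work inside the semidirect product $E=G\rtimes\langle\varphi\rangle$, which is again a finite group, and consider its subgroup $N\langle\varphi\rangle=N\rtimes\langle\varphi\rangle$; here $(|N|,|\varphi|)=1$ by coprimality, so $\langle\varphi\rangle$ is a complement to $N$ in this subgroup. Suppose $\bar g\in C_{G/N}(\varphi)$, so that the coset $gN$ is $\varphi$-invariant. A short computation (using $g\varphi g^{-1}=g(g^\varphi)^{-1}\varphi$ and $g^\varphi\in gN$) shows $g\varphi^k g^{-1}\in N\varphi^k$ for all $k$, so $g\langle\varphi\rangle g^{-1}$ is another complement to $N$ in $N\rtimes\langle\varphi\rangle$. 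By the conjugacy part of the Schur--Zassenhaus theorem (applicable because $\langle\varphi\rangle$ is cyclic, hence soluble) there is $n\in N$ with $(ng)\varphi(ng)^{-1}=\varphi^k$ for some $k$. Reducing this identity modulo $N$ in $(G/N)\rtimes\langle\varphi\rangle$ and using that $\bar g$ commutes with $\varphi$ forces $\varphi^k\equiv\varphi\pmod N$, whence $\varphi^k=\varphi$ since $\langle\varphi\rangle\cap N=1$. Thus $c:=ng\in C_G(\varphi)$ and $cN=gN$, proving $\bar g\in C_G(\varphi)N/N$.

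To pass to the profinite case, I would express $G$ as the inverse limit of its finite quotients $G/M$ over the directed family of $\varphi$-invariant open normal subgroups $M\le N$. Such $M$ form a base of neighbourhoods of $1$: given any open normal subgroup $U$, the subgroup $U\cap N$ is open normal and contained in $N$, and intersecting it over its (finite) $\langle\varphi\rangle$-orbit produces a $\varphi$-invariant open normal subgroup $M\le N$. For each such $M$ the image of $g$ lies in $C_{(G/M)/(N/M)}(\varphi)$, so the finite case yields the nonempty finite set $F_M=\{xM\in G/M\mid xM\in gN/M \text{ and } (xM)^\varphi=xM\}$ of $\varphi$-fixed points in the image of the coset $gN$. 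The projections $G/M'\to G/M$ for $M'\le M$ carry fixed points to fixed points and respect the coset $gN$, so the $F_M$ form an inverse system of nonempty finite sets, whose inverse limit is therefore nonempty; any element of it gives $c\in gN$ with $c^\varphi=c$, i.e. $c\in C_G(\varphi)$ with $cN=gN$.

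The main obstacle is the generator-matching in the finite case: conjugacy of complements only delivers $(ng)\varphi(ng)^{-1}\in\langle\varphi\rangle$, and one must use that $\bar g$ is $\varphi$-fixed to upgrade this to genuine centralization of $\varphi$. Once that is secured, the profinite step is routine, relying on the profinite Schur--Zassenhaus theorem already invoked before Lemma~\ref{l-inv} and on the nonemptiness of inverse limits of nonempty finite sets.
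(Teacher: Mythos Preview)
The paper does not prove this lemma itself but records it as a special case of \cite[Proposition~2.3.16]{rib-zal}, so there is no in-paper argument to compare against. Your finite-group step via Schur--Zassenhaus conjugacy of complements is correct and is the standard proof.

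Your profinite reduction, however, contains a real error. You claim that for an open normal subgroup $U$ of $G$ the intersection $U\cap N$ is open normal in $G$. This is false whenever $N$ is not open: in a topological group any subgroup containing an open subgroup is itself open, so if $U\cap N$ were open then $N\supseteq U\cap N$ would be open as well. Thus when $N$ is a closed normal subgroup of infinite index (the generic situation here), there are \emph{no} open normal subgroups $M\le N$ at all, and the directed family you build is empty. Your expression ``$gN/M$'' also only makes sense when $M\le N$.

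The repair is easy and keeps the rest of your argument intact: drop the requirement $M\le N$. Let $M$ range over all $\varphi$-invariant open normal subgroups of $G$; these do form a base of neighbourhoods of $1$, since $G\langle\varphi\rangle$ is profinite and for any open normal $U\trianglelefteq G$ one can take $V\cap G$ with $V$ an open normal subgroup of $G\langle\varphi\rangle$ contained in $U$. In the finite quotient $G/M$ apply your finite case to the normal subgroup $NM/M$, obtaining the nonempty finite set
\[
F_M=\{\,xM\in G/M : (xM)^{\varphi}=xM \text{ and } xM\in gNM/M\,\}.
\]
These form an inverse system exactly as you describe, and any element of the (nonempty) inverse limit gives $c\in C_G(\varphi)$ with $c\in\bigcap_M gNM=gN$, the last equality holding because $N$ is closed.
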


As a consequence, we have the following.

\begin{lemma}[{see \cite[Lemma~4.3]{khu-shu191}}] \label{l-gff}
If $\varphi $ is a coprime automorphism of a profinite group $G$, then
$[[G,\varphi],\varphi]=[G,\varphi]$.
\end{lemma}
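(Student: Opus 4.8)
The inclusion $[[G,\varphi],\varphi]\leq[G,\varphi]$ is immediate, since $[G,\varphi]$ is a $\varphi$-invariant subgroup; the whole content of the lemma is the reverse inclusion $[G,\varphi]\leq[[G,\varphi],\varphi]$. The plan is to extract from Lemma~\ref{l-cover} the standard coprime decomposition $G=[G,\varphi]C_G(\varphi)$ and then feed it into a short commutator computation. Throughout I write $K=[G,\varphi]$ and $N=[[G,\varphi],\varphi]=[K,\varphi]$, and I use the elementary identity $[xy,\varphi]=[x,\varphi]^y[y,\varphi]$. Rearranged as $[x,\varphi]^y=[xy,\varphi][y,\varphi]^{-1}\in K$, this identity already shows that $K$ is a closed $\varphi$-invariant normal subgroup of $G$ (applied to its topological generators $[g,\varphi]$).

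To obtain the decomposition, I would apply Lemma~\ref{l-cover} with the $\varphi$-invariant closed normal subgroup $K$ in the role of its $N$. By the very definition of $K=[G,\varphi]$ the automorphism $\varphi$ acts trivially on $G/K$, since $(gK)^\varphi=g\,[g,\varphi]\,K=gK$; hence $C_{G/K}(\varphi)=G/K$. Lemma~\ref{l-cover} then reads $C_{G/K}(\varphi)=C_G(\varphi)K/K$, and comparing the two expressions gives $G/K=C_G(\varphi)K/K$, that is, $G=K\,C_G(\varphi)$.

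With the decomposition in hand, the conclusion comes from computing $[g,\varphi]$ for a general $g=kc$ with $k\in K$ and $c\in C_G(\varphi)$. First, $[g,\varphi]=[kc,\varphi]=[k,\varphi]^c[c,\varphi]=[k,\varphi]^c$, because $[c,\varphi]=c^{-1}c^\varphi=1$. Second --- and this is the only place where coprimeness enters, and only through the fixed-point property $c^\varphi=c$ --- conjugation by $c$ commutes with $\varphi$, so $[k,\varphi]^c=(k^c)^{-1}(k^\varphi)^c=(k^c)^{-1}(k^c)^\varphi=[k^c,\varphi]$. Since $K$ is normal we have $k^c\in K$, whence $[g,\varphi]=[k^c,\varphi]\in[K,\varphi]=N$. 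Thus every topological generator $[g,\varphi]$ of $K$ lies in the closed subgroup $N$, so $K\leq N$, which together with $N\leq K$ yields $[G,\varphi]=[[G,\varphi],\varphi]$.

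The argument is short once the decomposition $G=K\,C_G(\varphi)$ is available, so the only real work is making sure Lemma~\ref{l-cover} applies verbatim in the profinite setting and that the passage from generators to the closed subgroup $N$ is legitimate; both are routine here because $K$ and $N$ are closed and $\varphi$-invariant by construction. The subtlety worth flagging is that the identity $[k,\varphi]^c=[k^c,\varphi]$ genuinely requires $c\in C_G(\varphi)$ (so that $c^\varphi=c$) and would fail for an arbitrary $c\in G$; this is precisely why the decomposition $G=K\,C_G(\varphi)$, rather than mere normality of $N$ in $G$, is what drives the proof.
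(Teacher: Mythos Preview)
Your proof is correct and follows precisely the route the paper indicates: the lemma is presented there as a consequence of Lemma~\ref{l-cover}, and you make this explicit by using Lemma~\ref{l-cover} to obtain the coprime decomposition $G=[G,\varphi]\,C_G(\varphi)$ and then reading off $[g,\varphi]\in[[G,\varphi],\varphi]$ from it. This is the standard argument and matches the intended derivation.
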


We will be applying the following profinite version of a theorem of Thompson.  Namely, Thompson \cite{th} proved that if $G$ is a finite soluble group on which a finite soluble group $A$ of coprime order acts by automorphisms, then the Fitting height $h(G)$ is bounded in terms of $h(C_G(A))$ and the number of prime divisors of $|A|$ counting multiplicities. (Further results in this direction were devoted to improving the corresponding bounds, with best possible one obtained by Turull, see his survey \cite{turull}.) The profinite version of Thompson's theorem can be deduced by standard arguments in the spirit of \cite[Lemma~2]{wil83}, since the hypotheses are inherited by quotients by closed normal subgroups by Lemma~\ref{l-cover}.

\begin{theorem}[Thompson]\label{thompson}
Let $G$ be a prosoluble group on which a finite soluble group $A$ acts by continuous automorphisms and suppose that $\pi(G)\cap \pi(A)=\varnothing$. If $C_G(A)$ has finite Fitting height, then $G$  also has finite Fitting height.
\end{theorem}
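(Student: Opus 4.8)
The plan is to deduce the statement from the finite version of Thompson's theorem (\cite{th}) by a routine inverse-limit argument, as the surrounding text indicates. First I would write $G=\varprojlim_N G/N$, where $N$ runs over the open normal $A$-invariant subgroups of $G$. These are cofinal among all open normal subgroups: given an arbitrary open normal $N_0$, the finite intersection $\bigcap_{a\in A}N_0^{a}$ is again open (a finite intersection of open subgroups), normal, $A$-invariant, and contained in $N_0$, using that $A$ is finite. For each such $N$ the quotient $G/N$ is a finite soluble group, the action of $A$ on $G/N$ is well defined since $N$ is $A$-invariant, and it remains coprime because $\pi(G/N)\subseteq\pi(G)$ is disjoint from $\pi(A)$; moreover $A$ is finite soluble. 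Thus the hypotheses of the finite Thompson theorem hold for each pair $(G/N,A)$.

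Next I would apply the finite theorem to obtain a bound $h(G/N)\le f\bigl(\ell,\,h(C_{G/N}(A))\bigr)$, where $\ell$ is the number of prime divisors of $|A|$ counted with multiplicities and $f$ is Thompson's bounding function. The crucial observation is that this bound is uniform in $N$. Indeed, by Lemma~\ref{l-cover} we have $C_{G/N}(A)=C_G(A)N/N$, so $C_{G/N}(A)$ is a continuous epimorphic image of $C_G(A)$; since the Fitting height of a profinite group does not increase under passing to continuous quotients, $h(C_{G/N}(A))\le h(C_G(A))=:h$, which is finite by hypothesis. Hence $h(G/N)\le f(\ell,h)=:C$ for every $N$, with $C$ independent of $N$.

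The hard part, and the only step that genuinely requires profinite bookkeeping, will be transferring this uniform bound from the finite quotients to $G$ itself, that is, concluding that $F_C(G)=G$. The mechanism I would use is the identity $F(G)=\bigcap_N\pi_N^{-1}\bigl(F(G/N)\bigr)$, where $\pi_N\colon G\to G/N$ is the projection. The inclusion $\subseteq$ is immediate, since $F(G)N/N$ is a normal pronilpotent, hence image-nilpotent, subgroup of $G/N$. For the reverse inclusion one checks that the right-hand side is a closed normal subgroup all of whose finite continuous quotients are nilpotent, hence it is pronilpotent and therefore contained in $F(G)$. Iterating this along the Fitting series by induction on $i$ (in the spirit of \cite[Lemma~2]{wil83}) yields $F_i(G)=\bigcap_N\pi_N^{-1}\bigl(F_i(G/N)\bigr)$ for all $i$. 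Applying this with $i=C$ and using $F_C(G/N)=G/N$ for every $N$ gives $F_C(G)=\bigcap_N\pi_N^{-1}(G/N)=G$, so $G$ has finite Fitting height, as required.
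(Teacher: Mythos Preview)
Your proposal is correct and follows precisely the route the paper indicates: the paper does not spell out a proof but simply remarks that the profinite version ``can be deduced by standard arguments in the spirit of \cite[Lemma~2]{wil83}, since the hypotheses are inherited by quotients by closed normal subgroups by Lemma~\ref{l-cover}''; your inverse-limit argument, with the uniform bound coming from Lemma~\ref{l-cover} and the transfer to $G$ via the Fitting-series identity $F_i(G)=\bigcap_N\pi_N^{-1}(F_i(G/N))$, is exactly that standard deduction. One minor remark: Lemma~\ref{l-cover} is stated for a single coprime automorphism, whereas you invoke it for the group $A$; but since each $G/N$ is finite and $(|G/N|,|A|)=1$, the group version $C_{G/N}(A)=C_G(A)N/N$ is the classical finite-group fact (or follows by iterating the cyclic case through a chief series of the soluble group $A$), so this is not a gap.
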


We begin our step-by-step approach to proving that $F(G)\ne 1$ in any nontrivial prosoluble group with countable right Engel sinks. The first step is considering the case where all Sylow subgroups are finite.

 \begin{lemma}\label{l-finsyl}
 Suppose that $G$ is a nontrivial prosoluble group in which Sylow $p$-subgroups are finite for all primes $p$. If every element of $G$ has a countable right Engel sink, then $F(G)\ne 1$.
 \end{lemma}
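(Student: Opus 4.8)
The plan is to show that $G$ has \emph{finite Fitting height}; this suffices, because if we had $F(G)=1$ then at every stage $F_{i+1}(G)/F_i(G)=F(G/F_i(G))$ would be trivial, forcing $F_i(G)=1$ for all $i$ and hence $G=F_k(G)=1$ against $G\neq 1$. The case $\pi(G)$ finite is immediate: since each Sylow subgroup is finite and there are finitely many primes, $|G|=\prod_{p\in\pi(G)}|P_p|$ is finite, so $G$ is a nontrivial finite soluble group and $F(G)\neq 1$ by classical theory. Thus I may assume $\pi(G)$ is infinite and concentrate on bounding the Fitting height.

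The Engel-sink hypothesis enters through the Baire-category scheme already used in Lemmas~\ref{l-ksm}, \ref{l-pi} and \ref{l-ad}. Fixing a prime $p\in\pi(G)$ and a nontrivial $p$-element $a$ (of finite order, as the Sylow $p$-subgroup is finite), I would cover $G$ by the closed sets $U_{k,s,t}=\{b\in G:[[b,\,{}_ka^{s}],a^{t}]=1\}$, which exhaust $G$ by Lemma~\ref{l-ksm}; by Theorem~\ref{bct} some $U_{k,s,t}$ contains a coset $Nd$ of an open normal subgroup $N$, giving $[[b,\,{}_ka^{s}],a^{t}]=1$ for all $b\in Nd$. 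Reducing $s$ and $t$ to their parts coprime to $p$, exactly as in the proof of Lemma~\ref{l-ad}, promotes this to a genuine bounded right Engel relation for $a$ on $Nd$. I would then seek a $\langle a\rangle$-invariant $p'$-section $M$ on which $\langle a\rangle$ acts coprimely and apply the coprime-action machinery (Lemmas~\ref{l-inv}, \ref{l-cover}, \ref{l-gff}): for a coprime action the chain $[M,a]=[M,\,{}_2a]=\cdots$ collapses a bounded Engel relation to $[M,a]=1$, so $a$ centralizes a large $p'$-part. Organizing such data over the primes and feeding a suitable coprime automorphism $A$ into Thompson's theorem (Theorem~\ref{thompson}) should bound $h(G)$: the sink condition forces the relevant fixed-point subgroup to be pronilpotent, hence locally nilpotent of Fitting height one by Theorem~\ref{t2}, and Thompson's theorem then propagates finiteness of the Fitting height to $G$.

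The main obstacle, as I see it, is manufacturing the coprime action, and then controlling the centralizer uniformly across the infinitely many primes. Conjugation by an inner $p$-element is never coprime on $G$ itself, so one cannot directly invoke Lemma~\ref{l-inv}; the delicate point is to descend to $\langle a\rangle$-invariant Hall $p'$-subgroups (or an appropriate normal Hall-type decomposition, on the model of the splitting in examples such as $\prod_i(C_{q_i}\rtimes C_{p_i})$) on which the action genuinely is coprime, and to verify that the fixed-point subgroup produced there has bounded Fitting height rather than merely trivial behaviour prime by prime. A secondary source of technical work is the passage from the coset relation on $Nd$ to a relation on a subgroup, where the higher-weight tail terms $v(\dots)$ from the commutator expansion (as handled in Lemma~\ref{l-ad}) must be absorbed; this bookkeeping is precisely what the countability — as opposed to finiteness — of the sinks makes unavoidable, and it is what distinguishes this case from the easy finite-sink situation. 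Once finite Fitting height is secured, the reduction at the start yields $F(G)\neq 1$.
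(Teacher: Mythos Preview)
Your proposal has a genuine gap at the point where you invoke Thompson's theorem. You correctly recognize that Theorem~\ref{thompson} requires the centralizer $C_G(A)$ of the acting group to have finite Fitting height, but you provide no mechanism to verify this. Fixing a single $p$-element $a$ and showing via coprime-action lemmas that $a$ centralizes a large $p'$-section gives information about what $a$ does, not about what $C_G(a)$ looks like; your assertion that ``the sink condition forces the relevant fixed-point subgroup to be pronilpotent'' is unsupported --- $C_G(a)$ inherits the countable-sink hypothesis but nothing more, and could in principle be all of $G$. Iterating over primes does not help: the data you extract are local to each chosen $a$ and do not assemble into a single coprime action with controlled centralizer.

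The paper's argument is structurally quite different, and the key idea you are missing is a \emph{second} application of Baire category, made possible by the hypothesis that all Sylow subgroups are finite. After a first Baire step the paper reduces (via a short case analysis) to the situation where every $g\in G$ satisfies $[g,\,{}_{k(g)}x]=1$ on some coset of an open normal subgroup depending on $g$. The crucial observation is then that finite Sylow subgroups force $G$ to have only \emph{countably many} open normal subgroups, hence countably many such cosets; a second Baire-category argument now yields a genuine two-variable coset identity $[ub,\,{}_{k}vd]=1$ for all $u,v$ in an open normal subgroup $L$. Choosing $b,d$ in a finite Hall subgroup and setting $B=\langle b,d\rangle$, one gets $[u,\,{}_kv]=1$ for all $u,v\in C_L(B)$, so $C_L(B)$ is $k$-Engel and hence locally nilpotent by Theorem~\ref{t-wz}. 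Now $B$ is finite, acts coprimely on (a $\sigma'$-version of) $L$, and its centralizer has Fitting height $1$ --- exactly the input Thompson's theorem needs. Your route via a single inner $p$-element cannot produce this uniform two-variable identity, and that is where the argument breaks.
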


\begin{proof}
For a given arbitrary element $g\in G$, let $\{s_1,s_2,\dots \}$ be a countable right Engel sink of~$g$. Consider the subsets
$$
T_{i,j,k}=\{x\in G \mid [g,\,{}_kx]=s_i\text{ and }  [s_i,x]=s_j\}.
$$
Note that each $T_{i,j,k}$ is a closed subset of $G$.
By the definition of a right Engel sink, we have
$$
G=\bigcup _{i,j,k}T_{i,j,k}.
$$
By Theorem~\ref{bct} some $T_{i,j,k}$ contains a coset $Nd$ of an open normal subgroup $N$ of $G$, so that
\begin{equation}\label{e-gss}
[g,\,{}_kx]=s_i\quad\text{and}\quad  [s_i,x]=s_j \quad\text{for any }  x\in Nd.
\end{equation}
Since $[s_i,nd]=s_j$ for all $n\in N$, it follows that $[s_i,N]=1$. Then $s_i$ has centralizer of finite index in $G$, and $s_i$ has finitely many conjugates in $G$. Hence the normal closure $\langle s_i^G\rangle$ is central-by-finite.
Given that $G$ is a prosoluble group, it follows that if $s_1\neq1$, then $G$ has a normal abelian subgroup and therefore also a closed normal abelian subgroup, so that $F(G)\ne 1 $.

Thus, the lemma is proved unless for every $g\in G$ the element $s_i$ given by the above argument in \eqref{e-gss} is trivial. In other words, it remains to consider the case where for every $g\in G$ there is a positive integer $k=k(g)$ and  a coset $Nd$ of an open subgroup $N$ of $G$ such that
\begin{equation}\label{e-gss2}
[g,\,{}_kx]=1 \quad\text{for any } x\in Nd.
\end{equation}

We now observe that the group $G$ has only countably many open normal subgroups. Indeed, any normal subgroup of finite index $n$ must contain the normal subgroup $H({\pi (n))}$ generated by all Sylow $q$-subgroups for all primes $q\not\in\pi(n)$, and each quotient by $G/H({\pi (n))}$ is finite, since all Sylow subgroups of $G$ are finite. Therefore $G$ has only finitely many normal subgroups of any given finite index and hence only countably many open normal subgroups.
Consequently, $G$ has countably  many cosets of such subgroups, say, $\{D_1,D_2,\dots \}$.
Consider the subsets
$$
U_{j,k}=\{x\in G \mid [x,\,{}_ky]=1\text{ for any }  y\in D_j\}.
$$
Note that each $U_{j,k}$ is a closed subset of $G$.
By our assumption involving \eqref{e-gss2}, we have
$$
G=\bigcup _{j,k}U_{j,k}.
$$
By Theorem~\ref{bct} some $U_{j,k}$ contains a coset $Kb$ of an open normal subgroup $K$ of $G$, so that
\begin{equation*}
 [x,\,{}_ky]=1\quad\text{ for any }  x\in Kb\text{ and any }y\in D_j,
\end{equation*}
where $D_j=Nd$ for some $d\in G$ and an open normal subgroup $N$ of $G$. Setting $L=K\cap N$  we obtain
\begin{equation}\label{e-gss3}
 [ub,\,{}_kvd]=1\quad\text{ for any }  u,v\in L.
\end{equation}
Let $\s=\pi (G/L)$, which is a finite set of primes. Then $G=LH$ for a  Hall $\s$-subgroup $H$, which is finite, since all Sylow subgroups are finite. We can choose the coset representatives $b,d\in H$ satisfying \eqref{e-gss3}. Then $B=\langle b , d \rangle$ is a finite  subgroup of $H$.

By standard commutator formulae equation \eqref{e-gss3} implies that
\begin{equation}
 1=[ub,\,{}_kvd]=[b,\,{}_k d][u,\,{}_kv]=[u,\,{}_kv]\quad\text{ for any }  u,v\in C_L(B).
\end{equation}
Hence  $C_L(B)$ is a $k$-Engel group. Therefore $C_L(B)$ is locally nilpotent by Theorem~\ref{t-wz}.

Recall that $\s=\pi (G/L)$ and $H$ is a Hall $\s$-subgroup. Since $H$  is finite, there is an open normal subgroup $M$ of $G$ that intersects $H$ trivially; replacing $L$ with $L\cap M$, we can assume that $L$ is a $\s '$-subgroup.

By Theorem~\ref{thompson} applied to the action of $B$ on $L$ we obtain that $L$ has a finite series of characteristic closed subgroups with pronilpotent factors. This implies that $F(G)\ne 1$ (even if $L=1$, since $G$ is prosoluble).
\end{proof}

We now consider coprime automorphisms in relation to Engel sinks. In the proof of the following lemma we use the well-known fact that if $G$ is nilpotent and $G/G'$ is finite, then $G$ is finite  (see, for example, \cite[5.2.6]{rob}).

 \begin{lemma}\label{l-copr3}
 Let $\varphi$ be a coprime automorphism of a pronilpotent
group~$G$. If  all elements of  the semidirect product $G\langle \varphi\rangle$ have countable right Engel sinks, then $\gamma _{\infty} (G\langle \varphi\rangle)$ is finite  and  $\gamma _{\infty}(G\langle \varphi\rangle)= [G,\varphi]$.
 \end{lemma}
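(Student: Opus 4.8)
The plan is to prove the two assertions in the reverse order of difficulty: the identity $\gamma_\infty(G\langle\varphi\rangle)=[G,\varphi]$ is a formal consequence of coprime action that needs no Engel hypothesis, whereas the finiteness is the substantive part.

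First I would set $\Gamma=G\langle\varphi\rangle$ and establish the equality. For the inclusion $[G,\varphi]\leq\gamma_\infty(\Gamma)$ I would iterate Lemma~\ref{l-gff}: the relation $[[G,\varphi],\varphi]=[G,\varphi]$ shows that $[G,\varphi]$ equals the $(m+1)$-fold commutator $[G,\varphi,\dots,\varphi]$ for every $m$, and this $(m+1)$-fold commutator lies in $\gamma_{m+1}(\Gamma)$; hence $[G,\varphi]\leq\bigcap_m\gamma_m(\Gamma)=\gamma_\infty(\Gamma)$. For the opposite inclusion I would note that $\varphi$ centralizes $G/[G,\varphi]$, so in $\Gamma/[G,\varphi]$ the image of $\langle\varphi\rangle$ is central; since $G/[G,\varphi]$ is a quotient of the pronilpotent group $G$ and hence pronilpotent, the quotient $\Gamma/[G,\varphi]$ (a pronilpotent normal subgroup together with a central procyclic subgroup) is pronilpotent, i.e. $\gamma_\infty(\Gamma/[G,\varphi])=1$. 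Thus $\gamma_\infty(\Gamma)\leq[G,\varphi]$ and equality holds. In particular it suffices to prove that $H:=[G,\varphi]$ is finite.

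To attack finiteness I would first reduce to the abelianization. The subgroup $H=\gamma_\infty(\Gamma)$ is characteristic in $\Gamma$, so $[H,H]$ is normal in $\Gamma$ and $M:=H/[H,H]$ is an abelian normal subgroup of the profinite group $M\langle\varphi\rangle=\Gamma/[H,H]$, which is metabelian and inherits the hypothesis that every element has a countable right Engel sink. By Lemma~\ref{l-metab} every element of $M\langle\varphi\rangle$ then has a countable \emph{left} Engel sink, so Theorem~\ref{t-left} provides a finite normal subgroup $N$ of $M\langle\varphi\rangle$ with $(M\langle\varphi\rangle)/N$ locally nilpotent. The crux is to show that the image $\bar M$ of $M$ in this locally nilpotent quotient is trivial. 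Here I would use coprimeness: since $[G,\varphi]=H$ gives $[H,\varphi]=H$ (Lemma~\ref{l-gff}), we have $[M,\varphi]=M$ and hence $[\bar M,\varphi]=\bar M$, while $\bar\varphi$ still acts coprimely on the abelian profinite group $\bar M=\prod_p\bar M_p$. For each prime $p$ the coprime Fitting decomposition $\bar M_p=C_{\bar M_p}(\varphi)\times[\bar M_p,\varphi]$ together with $[\bar M_p,\varphi]=\bar M_p$ forces $C_{\bar M_p}(\varphi)=1$, so the map $\bar m\mapsto[\bar m,\varphi]$ (which is $\varphi-1$ in additive notation) is injective; on the other hand local nilpotence of $\bar M\langle\bar\varphi\rangle$ yields $(\varphi-1)^c\bar m=0$ for each $\bar m$, whence $\bar m=0$. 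Thus $\bar M_p=0$ for all $p$, so $\bar M=1$, $M\subseteq N$, and $M$ is finite.

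Finally I would bootstrap from the finiteness of $M=H/[H,H]$ to the finiteness of $H$. Writing $H=\prod_pH_p$, finiteness of $M$ forces all but finitely many of the factors $H_p/[H_p,H_p]$ to be trivial; the corresponding $H_p$ are then perfect pro-$p$ groups, hence trivial, and the remaining finitely many $H_p$ have finite abelianization and are therefore topologically finitely generated. Each such $H_p$ is a finitely generated pro-$p$ group in which every element has a countable right Engel sink, so it is nilpotent by Proposition~\ref{pr-pro-p}, and a nilpotent group with finite derived quotient is finite. Hence $H$ is a finite product of finite groups, so $\gamma_\infty(\Gamma)=[G,\varphi]=H$ is finite. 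The main obstacle is exactly this finiteness step; the two ideas that drive it are the passage to the metabelian quotient $M\langle\varphi\rangle$, which unlocks Theorem~\ref{t-left} through Lemma~\ref{l-metab}, and the observation that a coprime automorphism acting on an abelian group both injectively through $\varphi-1$ and locally nilpotently can leave nothing behind.
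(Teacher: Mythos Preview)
Your approach is essentially the same as the paper's: establish $\gamma_\infty(\Gamma)=[G,\varphi]$ via Lemma~\ref{l-gff}, pass to the abelianization $V=[G,\varphi]/[G,\varphi]'$, exploit that $V\langle\varphi\rangle$ is metabelian so that Lemma~\ref{l-metab} and Theorem~\ref{t-left} force $V$ to be finite, and then bootstrap to finiteness of $[G,\varphi]$ using local nilpotency (Theorem~\ref{t2} or Proposition~\ref{pr-pro-p}) together with the fact that a nilpotent group with finite abelianization is finite.

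Two small points. First, the identification $M\langle\varphi\rangle=\Gamma/[H,H]$ is not right: $\Gamma/[H,H]$ contains all of $G/[H,H]$, not just $M=H/[H,H]$, and is not metabelian in general. You should work in $H\langle\varphi\rangle/[H,H]$ instead, which \emph{is} metabelian since $(H\langle\varphi\rangle)'\leq H$; your subsequent argument only uses $M$ and $\varphi$ anyway, so nothing else changes. Second, your detour proving $\bar M=0$ via injectivity of $\varphi-1$ plus local nilpotence is correct but longer than needed: the paper simply observes that the very argument you used for $\gamma_\infty(\Gamma)=[G,\varphi]$ applies verbatim to $V\langle\varphi\rangle$ (with $V$ abelian, hence pronilpotent) to give $\gamma_\infty(V\langle\varphi\rangle)=[V,\varphi]=V$, so Theorem~\ref{t-left} makes $V$ finite in one stroke.
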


 \begin{proof}
The group $G$ is locally nilpotent by Theorem~\ref{t2}.  The quotient $G\langle \varphi\rangle/[G,\varphi]$ is obviously the direct product of the images of $G$ and $\langle \varphi\rangle$ and therefore is pronilpotent. Hence,  $\gamma _{\infty}(G\langle \varphi\rangle)\leq [G,\varphi]$.  By Lemma~\ref{l-gff},
\begin{equation*}
[[G,\varphi],\varphi]=[G,\varphi].
\end{equation*}
Therefore also  $\gamma _{\infty}(G\langle \varphi\rangle)\geq [G,\varphi]$, so that  $\gamma _{\infty}(G\langle \varphi\rangle)= [G,\varphi]$.

Let $V$ be the quotient of  $[G,\varphi ]$ by its derived subgroup. The semidirect product $V\langle \varphi\rangle$ is metabelian and therefore all elements of it also have countable left Engel sinks.  By Theorem~\ref{t-left} then  $\gamma _{\infty}(V\langle \varphi\rangle)= [V,\varphi]=V$ is finite. It follows that the locally nilpotent pronilpotent group $[G,\varphi ]$ is finitely generated and therefore nilpotent and finite.
 \end{proof}

We shall further need the following simple lemma about finite groups.

\begin{lemma}\label{l-fq}
Suppose that $G$ is a finite $q$-soluble group admitting a coprime automorphism $\alpha$ such that $G=[G,\alpha ]$. If $Q$ is a nontrivial $\alpha$-invariant Sylow $q$-subgroup of $G$, then $[Q,\alpha ]\ne 1$.
\end{lemma}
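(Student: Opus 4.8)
The plan is to argue by contradiction: I assume $[Q,\alpha]=1$ and aim to deduce $G=1$, which is absurd since $Q\ne 1$. The argument runs by induction on $|G|$. If $O_{q'}(G)\ne 1$, I pass to $\bar G=G/O_{q'}(G)$, which is again $q$-soluble, on which $\alpha$ still acts coprimely and satisfies $\bar G=[\bar G,\alpha]$; the image $\bar Q$ of $Q$ is a nontrivial $\alpha$-invariant Sylow $q$-subgroup of $\bar G$ (nontrivial because $Q\cap O_{q'}(G)=1$) with $[\bar Q,\alpha]=1$. The induction hypothesis applied to $\bar G$ then gives a contradiction, so I may assume $O_{q'}(G)=1$.

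With $O_{q'}(G)=1$ I invoke the Hall--Higman lemma~1.2.3: for a $q$-soluble group with trivial $O_{q'}$ one has $C_G(O_q(G))\le O_q(G)$. Since every $O_p(G)$ with $p\ne q$ is contained in $O_{q'}(G)=1$, the Fitting subgroup coincides with $P:=O_q(G)$, and $C_G(P)\le P$. As $P$ is a normal $q$-subgroup, it lies in the Sylow $q$-subgroup $Q$, so from $[Q,\alpha]=1$ I obtain $[P,\alpha]=1$; in other words, the coprime automorphism $\alpha$ centralizes the self-centralizing Fitting subgroup $F(G)=P$.

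It remains to turn ``$\alpha$ centralizes $F(G)$'' into ``$\alpha$ centralizes $G$''. For this I form the semidirect product $H=G\langle\alpha\rangle$ and put $D=C_H(P)$. Since $P\trianglelefteq H$, the subgroup $D$ is normal in $H$, and $D\cap G=C_G(P)\le P$, so $D\cap G=Z(P)$. As $\alpha\in D$ and $D\trianglelefteq H$, I get $[G,\alpha]\le [G,D]\le D$; combined with $[G,\alpha]\le G$ this yields $[G,\alpha]\le G\cap D=Z(P)\le F(G)$. Hence $[[G,\alpha],\alpha]\le [F(G),\alpha]=1$, and the coprime-action identity $[G,\alpha]=[[G,\alpha],\alpha]$ (the finite case of Lemma~\ref{l-gff}) forces $[G,\alpha]=1$. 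Thus $G=[G,\alpha]=1$, the contradiction sought.

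The only real content is the interaction with $q$-solubility: reducing to $O_{q'}(G)=1$ is what makes the Fitting subgroup self-centralizing, and the Hall--Higman containment $C_G(O_q(G))\le O_q(G)$ is precisely the input that upgrades centralization of $O_q(G)$ to centralization of all of $G$. The surrounding manipulations are routine coprime-action bookkeeping, so I do not anticipate a serious obstacle.
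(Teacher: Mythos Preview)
Your proof is correct and follows essentially the same route as the paper's: reduce to $O_{q'}(G)=1$, invoke the Hall--Higman containment $C_G(O_q(G))\le O_q(G)$, and exploit that $\alpha$ centralizes $O_q(G)$. The only cosmetic difference is that the paper packages your semidirect-product argument as an appeal to Lemma~\ref{l-fng} (giving $O_q(G)\le Z(G)$, hence $G=O_q(G)=Q$ and $G=[Q,\alpha]=1$), whereas you unpack this by hand and finish via the identity $[G,\alpha]=[[G,\alpha],\alpha]$.
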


\begin{proof}
We can assume from the outset that $O_{q'}(G)=1$; then $O_q(G)\ne 1$ and $C_G(O_q(G))\leq O_q(G)$. Suppose the opposite: $[Q,\alpha ]=1$. Then $[O_q(G),\alpha ]=1$, whence $[O_q(G),[\alpha ,G] ]=[O_q(G),G]=1$ by Lemma~\ref{l-fng}, so that $G=O_q(G)$ and $G=[G,\alpha ]=[Q,\alpha]=1$, a contradiction.
\end{proof}

We now consider the action of a coprime automorphism of a prosoluble
group.

 \begin{lemma}\label{l-copr4}
 Let $\varphi$ be a nontrivial coprime
 automorphism
  of a prosoluble
group~$G$. If  all elements of  the semidirect product $G\langle \varphi\rangle$ have countable right Engel sinks, then $F([G, \f])\ne\nobreak 1$.
 \end{lemma}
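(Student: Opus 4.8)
The plan is to set $H=[G,\varphi]$ and to reduce the claim $F(H)\ne1$ to the assertion that $H$ has \emph{finite Fitting height}. As a closed subgroup of the prosoluble group $G$, the group $H$ is prosoluble; it is nontrivial since $\varphi\ne1$; by Lemma~\ref{l-gff} it satisfies $H=[H,\varphi]$; and every element of $H$ inherits a countable right Engel sink from $G\langle\varphi\rangle$, because a right Engel sink of an element restricts to any subgroup containing it. Finally, a nontrivial prosoluble group of finite Fitting height automatically has $F=F_1\ne1$, since $F(H)=1$ would force every term $F_i(H)$ of the Fitting series to be trivial and hence $H=1$. So it suffices to prove that $H$ has finite Fitting height.

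The local information is supplied by the Sylow subgroups together with Lemma~\ref{l-copr3}. For each $q\in\pi(H)$ pick, using Lemma~\ref{l-inv}, a $\varphi$-invariant Sylow $q$-subgroup $Q_q$. Being pro-$q$, hence pronilpotent, each $Q_q$ falls under Lemma~\ref{l-copr3}, which shows that $[Q_q,\varphi]=\gamma_\infty(Q_q\langle\varphi\rangle)$ is \emph{finite}. Moreover $[Q_{q_0},\varphi]\ne1$ for at least one prime $q_0$: otherwise $\varphi$ would centralize every $\varphi$-invariant Sylow subgroup, whereas passing to a finite quotient $\overline H=H/N$ by a $\varphi$-invariant open normal subgroup with $\overline H\ne1$ (so that $\overline H=[\overline H,\varphi]$) and applying Lemma~\ref{l-fq} to a nontrivial image $\overline{Q_q}$ gives $[\overline{Q_q},\varphi]\ne1$, contradicting that this is the image of $[Q_q,\varphi]=1$. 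Thus $\varphi$ acts on $H$ with all Sylow commutators $[Q_q,\varphi]$ finite and at least one of them nontrivial.

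To globalize this, I would invoke the profinite Thompson theorem (Theorem~\ref{thompson}): for a coprime action it reduces the finiteness of the Fitting height of $H$ to that of the fixed-point subgroup $C_H(\varphi)$, the fixed points in each quotient being covered by $C_H(\varphi)$ via Lemma~\ref{l-cover}. The structural input already obtained feeds this analysis: the coprime decomposition $Q_q=C_{Q_q}(\varphi)\,[Q_q,\varphi]$ combined with the finiteness of $[Q_q,\varphi]$ says that $\varphi$ fixes each Sylow subgroup of $H$ modulo a finite subgroup, so that the Sylow subgroups of $C_H(\varphi)$ are precisely the $C_{Q_q}(\varphi)$ and differ from those of $H$ only by finite factors.

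I expect the main obstacle to be exactly the control of $C_H(\varphi)$, together with the reduction to a finite group $A=\langle\varphi\rangle$ required by Theorem~\ref{thompson}: since the Sylow subgroups $C_{Q_q}(\varphi)$ may be infinite, Lemma~\ref{l-finsyl} does not apply to $C_H(\varphi)$ verbatim, and bounding its Fitting height cannot rest on the sizes of the $[Q_q,\varphi]$ alone. The intended route is to exploit, on top of the bare finiteness of the $[Q_q,\varphi]$, the full countable right Engel sink property of $C_H(\varphi)$ so as to reduce its Fitting height to the finite-Sylow situation governed by Lemma~\ref{l-finsyl}, and then to substitute the resulting bound back into Theorem~\ref{thompson}. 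Once $H$ is known to have finite Fitting height, $F([G,\varphi])=F(H)\ne1$ follows at once from the first paragraph.
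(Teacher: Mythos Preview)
Your proposal has a genuine gap at exactly the point you flag yourself: the control of $C_H(\varphi)$. You aim to apply Theorem~\ref{thompson} with $A=\langle\varphi\rangle$, which needs $C_H(\varphi)$ to have finite Fitting height, and you propose to obtain this by ``reducing its Fitting height to the finite-Sylow situation governed by Lemma~\ref{l-finsyl}''. But $C_H(\varphi)$ is simply a prosoluble group with countable right Engel sinks and no further structure: knowing that each $C_{Q_q}(\varphi)$ differs from $Q_q$ by a finite factor tells you nothing about how these centralizers sit together inside $C_H(\varphi)$, and in particular gives you no handle on its Fitting height. Establishing finite Fitting height (or even $F\ne1$) for an arbitrary prosoluble group with countable right Engel sinks is essentially Proposition~\ref{pr-f}, which is proved \emph{after} the present lemma and relies on it; your route is circular. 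There is also the side issue you note, that Theorem~\ref{thompson} requires the acting group to be finite while $\langle\varphi\rangle$ may be infinite procyclic, and you do not indicate how to reduce to a finite quotient of $\langle\varphi\rangle$ without losing the hypothesis $H=[H,\varphi]$.

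The paper avoids the centralizer entirely. Working with $G=[G,\varphi]$, it fixes a prime $q$, uses the finiteness of $[Q,\varphi]$ to find an open normal $\varphi$-invariant $N$ with $N\cap[Q,\varphi]=1$, and then applies Lemma~\ref{l-fq} in finite quotients to conclude that $[N,\varphi]$ is a $q'$-group, hence $[N,\varphi]\leq O_{q'}(G)$. Then Lemma~\ref{l-fng} gives $\bar N\leq Z(\bar G)$ in $\bar G=G/O_{q'}(G)$, so $\bar G$ is central-by-finite; combining Schur's theorem with the finiteness of $G/G'=[G/G',\varphi]$ (Lemma~\ref{l-copr3}) yields that $G/O_{q'}(G)$ is finite. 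Thus every Sylow subgroup of $G$ itself is finite, and Lemma~\ref{l-finsyl} applies directly to $G$, giving $F(G)\ne1$. No appeal to the Fitting height of $C_G(\varphi)$ is needed.
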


\begin{proof}
By Lemma~\ref{l-gff} we can assume from the outset that $G=[G,\f ]$.

For a prime $q$, let $Q$ be a $\f$-invariant Sylow $q$-subgroup of $G$. Then $[Q,\f ]$ is finite by Lemma~\ref{l-copr3}. Since $G\langle\f\rangle$ is a profinite group, it has an open normal subgroup $U$ such that $U\cap [Q,\f ]=1$; then  $N=G\cap U$ is a $\f$-invariant open normal subgroup of $G$ such that $N\cap [Q,\f ]=1$.

We claim that  $[N,\f ]$ is a $q'$-group. Indeed, $Q\cap N$ is a Sylow $q$-subgroup of $N$, and then $Q_1=Q\cap [N,\f ]$ is a Sylow $q$-subgroup of  $[N,\f ]$. We have $[Q_1,\f]\leq [Q,\f]\cap N=1$. Applying Lemma~\ref{l-fq} to every finite quotient of $[N,\f ]\langle\f\rangle$, we obtain that  $[N,\f ]$ is a $q'$-group.

Then $[N,\f ]\leq O_{q'}(N)\leq O_{q'}(G)$. In the quotient $\bar G=G/O_{q'}(G)$ we have $[\bar N,\f]=1$, 
so $\bar N\leq Z(\bar G)$ by Lemma~\ref{l-fng}. This means that $\bar G$ is central-by-finite, whence $\bar G'$ is finite by Schur's theorem \cite[10.1.4]{rob}. Since  $G/G'=[G/G',\f]$ is also finite by Lemma~\ref{l-copr3}, we obtain that $G/O_{q'}(G)$ is finite. Therefore a Sylow $q$-subgroup of $G$ is finite for every prime $q$. The result now follows from Lemma~\ref{l-finsyl}.
\end{proof}

We now consider the case of prosoluble groups of finite Fitting height.

 \begin{lemma}\label{l-height}
 Let $G$ be a prosoluble group of finite Fitting height. If  every element of  $G$ has a countable right Engel sink, then $\gamma _{\infty} (G)$ is finite.
 \end{lemma}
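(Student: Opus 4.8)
The plan is to argue by induction on the Fitting height $h$ of $G$. If $h\leq 1$ then $G$ is pronilpotent and $\gamma_{\infty}(G)=1$, so there is nothing to prove. Suppose $h\geq 2$ and set $F=F(G)$; then $G/F$ has Fitting height $h-1$, and since the countable right Engel sink condition passes to quotients, the induction hypothesis gives that $\gamma_{\infty}(G/F)=\gamma_{\infty}(G)F/F$ is finite. Hence $\gamma_{\infty}(G)/(\gamma_{\infty}(G)\cap F)$ is finite, and the problem reduces to showing that $D:=\gamma_{\infty}(G)\cap F$ is finite. As $F$ is pronilpotent (indeed locally nilpotent by Theorem~\ref{t2}) it is the Cartesian product of its Sylow subgroups $F_p=O_p(G)$, each characteristic in $G$, and the closed normal subgroup $D$ splits accordingly as $D=\prod_p(D\cap F_p)$. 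Thus it suffices to prove that each $D_p:=D\cap F_p$ is finite and that $D_p=1$ for all but finitely many $p$.

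To analyse a single prime I would pass to $\bar G=G/O_{p'}(G)$; here $F_p$ embeds isomorphically, $O_{p'}(\bar G)=1$, and $F(\bar G)=\bar F_p$ is a $p$-group, so after renaming we may assume $F=F_p=O_p(G)$ is a pro-$p$ group with $O_{p'}(G)=1$ and hence, by the prosoluble analogue of $C_G(F(G))\leq F(G)$, that $C_G(F)\leq F$. The obstruction to pronilpotency carried by $F$ is governed by the coprime action on $F$ of a Hall $p'$-subgroup $H$ of $G$, which exists because $G$ is prosoluble; indeed $H$ centralizes $F$ modulo $[F,H]$, so by standard coprime-action arguments the finiteness of $D_p$ will follow once the coprime commutator $[F,H]$ is shown to be finite. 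Everything therefore comes down to proving that $[F,H]$ is finite.

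For each individual element $a\in H$ the automorphism it induces on the pro-$p$ group $F$ is coprime, so Lemma~\ref{l-copr3} applied to $F\langle a\rangle$ shows that $[F,a]$ is finite. The task is to upgrade these pointwise finiteness statements to finiteness of the whole commutator $[F,H]$. Here I would invoke the Baire Category Theorem~\ref{bct}: covering $H$ (or a suitable finite direct power of it) by the closed sets on which prescribed sink values and Engel lengths are attained, one of them contains a coset $H_0c$ of an open subgroup $H_0\leq H$, yielding a uniform Engel-type identity valid on that coset. Exploiting coprimeness, exactly as in the passage from \eqref{e-gss3} to an Engel condition on a centralizer in the proof of Lemma~\ref{l-finsyl}, this uniform identity forces $[F,H_0]$ to be finite, and since $H/H_0$ is finite the finitely many remaining cosets contribute only the finite commutators $[F,a]$ coming from coset representatives. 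Hence $[F,H]$ is finite. Finally, reassembling over primes, a further application of Theorem~\ref{bct} (using that $G$ has only countably many open normal subgroups, together with the finiteness of $\gamma_{\infty}(G/F)$) shows that $D_p\neq 1$ for only finitely many $p$, so $D=\prod_p D_p$ is finite, as required.

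The main obstacle is this globalization step. Lemma~\ref{l-copr3} delivers finiteness of $[F,a]$ for each single coprime automorphism, but with merely \emph{countable} (rather than finite) right Engel sinks there is no a priori uniform bound across $H$, and the Sylow subgroups of $H$ may involve infinitely many primes. Converting the pointwise data into one uniform coset identity via Baire category, and then extracting genuine finiteness of $[F,H]$ from it through the coprime structure, is the delicate heart of the argument; bounding the number of primes $p$ for which $D_p\neq 1$ is a secondary difficulty to be handled by the same circle of ideas.
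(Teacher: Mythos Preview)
Your induction setup and the reduction to controlling $D=\gamma_\infty(G)\cap F(G)$ prime by prime are fine, and the use of Lemma~\ref{l-copr3} to get $[F_p,a]$ finite for each individual coprime element $a$ is exactly right. But the ``globalization step'' you flag as delicate is in fact a genuine gap, and the route you sketch does not go through.

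First, the appeal to Lemma~\ref{l-finsyl} is misleading: in that lemma the hypothesis that \emph{all Sylow subgroups are finite} is what makes $G$ have only countably many open normal subgroups and what makes the finite group $B=\langle b,d\rangle$ act coprimely on the open subgroup $L$; neither ingredient is available here. Your parenthetical claim that ``$G$ has only countably many open normal subgroups'' is simply false for a general prosoluble group of finite Fitting height. Second, even if Baire category hands you a coset identity on some $H_0c\subseteq H$, there is no mechanism in sight that converts such an identity into finiteness of $[F,H_0]$: the analogous passage in Lemma~\ref{l-finsyl} yields only that $C_L(B)$ is Engel, hence locally nilpotent, and then invokes Thompson's theorem to get finite Fitting height---nothing like ``$[F,H_0]$ finite''. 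Third, even granting $[F,H_0]$ finite, your coset-representative argument for $[F,H]$ finite ignores that $[F,H]$ is generated by \emph{all} $[f,h]$, not just those with $h$ a representative; normality issues intervene.

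The paper sidesteps the whole globalization problem by never attempting to control the action of a full Hall $p'$-subgroup. Instead it shows, one element at a time, that every $a\in G$ has a \emph{finite left} Engel sink, and then applies Theorem~\ref{t-left}. The key device you are missing is the passage to the abelianization $V=F(G)/[F(G),F(G)]$: the group $V\langle a\rangle$ is metabelian, so Lemma~\ref{l-metab} converts countable right sinks to countable left sinks, and Theorem~\ref{t-left} then gives $\gamma_\infty(V\langle a\rangle)$ finite. This forces $[F_p,a_{p'}]\leq [F_p,F_p]$ for all but finitely many $p$, and a short coprime-action argument (using Lemma~\ref{l-cover}) shows that this inclusion actually implies $[F_p,a_{p'}]=1$. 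The finitely many exceptional primes are handled by Lemma~\ref{l-copr3} exactly as you do. No uniformity over $H$ is ever needed.
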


\begin{proof}  It is sufficient to prove the result for the case of Fitting height 2. Then the general case will follow by induction on the Fitting height $k$ of $G$. Indeed, then $\gamma _{\infty}(G/ \gamma _{\infty}(F_{k-1}(G)))$ is finite, while $\gamma _{\infty}(F_{k-1}(G))$ is finite by the induction hypothesis, and as a result, $\gamma _{\infty}(G)$ is finite.

Thus, we assume that $G=F_2(G)$. By Theorem~\ref{t-left}, it is sufficient to show that every element  $a\in G$ has a finite left Engel sink.  Since $G/F(G)$ is locally nilpotent, a left Engel sink of $a$ in $F(G)\langle a\rangle$ is also a left Engel sink of $a$ in $G$. Therefore it is sufficient to prove that $\gamma _{\infty}(F(G)\langle a\rangle)$ is finite.

For a prime $p$, let $F_p$ be a Sylow $p$-subgroup of $F(G)$, and write $a=a_pa_{p'}$, where $a_p$ is a $p$-element, $a_{p'}$ is a $p'$-element, and $[a_p,a_{p'}]=1$. Then $F_p\langle a_p\rangle$ is a normal Sylow $p$-subgroup of  $F_p\langle a\rangle$, on which $a_{p'}$ induces by conjugation a coprime automorphism. By Lemma~\ref{l-copr3} the subgroup $\gamma _{\infty}(F_p\langle a\rangle)=[F_p,a_{p'}]$ is finite.
Since
$$
\gamma _{\infty}(F(G)\langle a\rangle)=\prod_p\gamma _{\infty}(F_p\langle a\rangle)=\prod_p [F_p,a_{p'}],
$$
it remains to prove that $\gamma _{\infty}(F_p\langle a\rangle)=[F_p,a_{p'}]=1$ for all but  finitely many primes $p$.

Let $V=F(G)/[F(G),F(G)]$. In the metabelian group $V\langle a\rangle$ all elements also have countable left Engel sinks by Lemma~\ref{l-metab}. By Theorem~\ref{t-left} the subgroup $\gamma _{\infty}(V\langle a\rangle)$ is finite. Since
$$
[F(G),F(G)] =\prod_p [F_p,F_p],
$$
it follows that $[F_p,a_{p'}]\leq [F_p,F_p]$ for all but finitely many primes $p$. But if $[F_p,a_{p'}]\leq [F_p,F_p]$, then $C_{F_p}(a_{p'})[F_p,F_p]=F_p$ by Lemma~\ref{l-cover}, whence $C_{F_p}(a_{p'})=F_p$, that is, $[F_p,a_{p'}]=\nobreak 1$. Hence the result.
\end{proof}

 \begin{lemma}\label{l-copr5}
 Let $\varphi$ be a coprime automorphism
 of a prosoluble group~$G$. If  all elements of  the semidirect product $G\langle \varphi\rangle$ have countable right Engel sinks, then $[G,\f]$ is finite.
 \end{lemma}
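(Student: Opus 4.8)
The plan is to reduce to the case $G=[G,\varphi]$ and then prove finiteness by bounding the Fitting height and feeding this into the finite-Fitting-height machinery already in place. First I would invoke Lemma~\ref{l-gff} to write $[[G,\varphi],\varphi]=[G,\varphi]$; since $[G,\varphi]$ is a closed $\varphi$-invariant subgroup and the hypothesis of countable right Engel sinks is inherited by the subgroup $[G,\varphi]\langle\varphi\rangle$ of $G\langle\varphi\rangle$, I may replace $G$ by $[G,\varphi]$ and assume $G=[G,\varphi]$. If $G=1$ there is nothing to prove, so assume $G\ne 1$, in which case $\varphi$ induces a nontrivial coprime automorphism of $G$. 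Running the argument in the proof of Lemma~\ref{l-copr4} --- for each prime $q$ a $\varphi$-invariant Sylow $q$-subgroup $Q$ has $[Q,\varphi]$ finite by Lemma~\ref{l-copr3}, and then Schur's theorem together with Lemma~\ref{l-copr3} force $G/O_{q'}(G)$ to be finite --- shows that every Sylow subgroup of $G$ is finite.

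Granting for the moment that $G$ has finite Fitting height, the proof finishes quickly. By Lemma~\ref{l-height} the subgroup $\gamma_\infty(G)$ is then finite, and the quotient $\bar G=G/\gamma_\infty(G)$ is pronilpotent with $[\bar G,\varphi]=\bar G$. Applying Lemma~\ref{l-copr3} to the pronilpotent group $\bar G$ gives that $\gamma_\infty(\bar G\langle\varphi\rangle)=[\bar G,\varphi]=\bar G$ is finite, so $\bar G$ is finite; as $\gamma_\infty(G)$ is finite, $G=[G,\varphi]$ is finite. Thus everything comes down to establishing that $G$ has finite Fitting height.

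For the Fitting height I would use the profinite Thompson theorem (Theorem~\ref{thompson}) applied to the coprime action of $\langle\varphi\rangle$ on the prosoluble group $G$: it suffices to bound the Fitting height of the fixed-point subgroup $C_G(\varphi)$. By coprime action $C_G(\varphi)$ again has finite Sylow subgroups (its Sylow subgroups arise as $C_Q(\varphi)$ inside the finite $\varphi$-invariant Sylow subgroups $Q$ of $G$), and it inherits the countable-right-Engel-sink hypothesis, so Lemma~\ref{l-finsyl} applies to it and to all of its quotients and yields $F(C_G(\varphi))\ne 1$ at every stage. I would then push the argument of Lemma~\ref{l-finsyl} to produce not merely a nontrivial Fitting subgroup but an open normal subgroup of finite Fitting height, giving $C_G(\varphi)$ finite Fitting height and hence, via Theorem~\ref{thompson}, finite Fitting height for $G$.

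The main obstacle is precisely this last point. Lemma~\ref{l-finsyl} as stated only guarantees $F\ne 1$; its ``hard case'' does produce an open normal subgroup of finite Fitting height, but its ``easy case'' (a nontrivial sink element $s_i$ whose normal closure is central-by-finite) yields only a nontrivial abelian normal subgroup and gives no a priori bound on the height. Forcing a uniform finite-Fitting-height conclusion for $C_G(\varphi)$ --- equivalently, ruling out an infinite strictly ascending Fitting series by threading the ``easy case'' through successive quotients --- is the delicate part of the argument. A secondary technical point is that $\langle\varphi\rangle$ may have infinite Steinitz order while Theorem~\ref{thompson} is stated for a finite acting group; this is handled by the standard coprime reduction to finite quotients of $G$ and of $\langle\varphi\rangle$, the Fitting-height bound furnished by Thompson's theorem being uniform and therefore surviving the passage to the inverse limit.
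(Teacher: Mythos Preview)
Your reduction to $G=[G,\varphi]$ and your endgame (once finite Fitting height is in hand) are correct, but the route you propose to finite Fitting height has two genuine gaps, and the paper's argument is quite different.

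First, your uniformity claim for Thompson's theorem is false in the generality needed here. As quoted in the paper, the bound on $h(G)$ depends not only on $h(C_G(A))$ but also on the number of prime divisors of $|A|$ counted with multiplicity. The automorphism $\varphi$ is only assumed to generate a procyclic group, which may well have infinite Steinitz order; as the open normal subgroup $M$ shrinks, the order of the image of $\varphi$ on $G/M$ can acquire arbitrarily many prime factors, and the Thompson bound blows up. So the passage to the inverse limit does not go through, and Theorem~\ref{thompson} as stated (with $A$ finite) genuinely does not apply.

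Second, even granting a finite acting group, upgrading Lemma~\ref{l-finsyl} from ``$F\ne 1$'' to ``finite Fitting height'' for $C_G(\varphi)$ is precisely the missing idea. You correctly identify this as the main obstacle, and I do not see how the ``easy case'' of that lemma can be threaded through successive quotients to force termination of the Fitting series. (Note also that you cannot borrow Proposition~\ref{pr-f} or Theorem~\ref{t3} here, since their proofs rely on the very lemma under discussion.)

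The paper sidesteps both problems and never invokes Thompson's theorem in this proof. Instead it applies Lemma~\ref{l-height} to the subgroup $F_2(G)$, which trivially has Fitting height at most~$2$, to get $\gamma_\infty(F_2(G))$ finite; this yields a $\varphi$-invariant open normal $N$ with $F_2(N)=F(N)$. Now Lemma~\ref{l-copr4} is used \emph{contrapositively} on $N/F(N)$: if $\varphi$ acted nontrivially there, then $F\big([N/F(N),\varphi]\big)\ne 1$, contradicting $F_2(N)=F(N)$. Hence $[N,\varphi]\le F(N)\le F(G)$, and Lemma~\ref{l-fng} gives $NF(G)/F(G)\le Z(G/F(G))$, so $G/F(G)$ is central-by-finite and $G$ has finite Fitting height. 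The key trick you are missing is this indirect use of Lemma~\ref{l-copr4}: rather than trying to climb a Fitting series, one uses ``$F\ne 1$'' once, in the contrapositive, to pin $[N,\varphi]$ inside $F(G)$.
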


\begin{proof}
  By Lemma~\ref{l-gff} we can assume from the outset that $G=[G,\f]$.
  Since $\g_{\infty}(F_2(G))$ is finite by Lemma~\ref{l-height}, there is a $\f$-invariant open normal subgroup $N$ of $G$ such that $N\cap \g_{\infty}(F_2(G))=1$. It follows that $\g_{\infty}(F_2(N))=1$, which means that $F_2(N)=F(N)$. Then $\f$ must act trivially on $N/F(N)$, since otherwise   $F_2(N)\ne F(N)$ by Lemma~\ref{l-copr4} applied to $N/F(N)$. Thus, $[N,\f ]\leq F(N)\leq F(G)$. Then $NF(G)/F(G)\leq Z(G/F(G))$ by Lemma~\ref{l-fng}, since $G=[G,\f]$ by our assumption. In particular, the Fitting height of $G$ is finite, and we obtain that $[G,\f]$ is finite by Lemma~\ref{l-height}.
  \end{proof}

We now prove the key proposition of this section.

\begin{proposition}\label{pr-f}
If every element of a nontrivial prosoluble group $G$ has a countable right Engel sink, then $F(G)\ne 1$.
\end{proposition}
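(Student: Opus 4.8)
The plan is to argue by contradiction: assume $F(G)=1$ and deduce that $G=1$, contradicting the hypothesis that $G$ is nontrivial. The endgame is clean. If one can show that $G$ has finite Fitting height, then from $F_1(G)=F(G)=1$ it follows inductively that $F_{i+1}(G)$ is the preimage of $F(G/F_i(G))=F(G)=1$, so $F_i(G)=1$ for all $i$, whence $G=F_k(G)=1$. By Lemma~\ref{l-height} (finite Fitting height yields $\gamma_\infty(G)$ finite), an essentially equivalent endgame is to prove that $\gamma_\infty(G)$ is finite: then $C=C_G(\gamma_\infty(G))$ is a normal subgroup of finite index, it is central-by-pronilpotent (because $C\cap\gamma_\infty(G)\le Z(C)$ while $C/(C\cap\gamma_\infty(G))$ embeds in the pronilpotent group $G/\gamma_\infty(G)$) and hence pronilpotent, so $C\le F(G)=1$ forces $G$ finite, and a nontrivial finite soluble group has $F(G)\ne1$. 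Thus everything reduces to bounding the Fitting height of $G$.

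First I would extract, for an arbitrary $g\in G$, a local condition by the Baire argument used in the first half of Lemma~\ref{l-finsyl}: writing a countable right Engel sink of $g$ as $\{s_1,s_2,\dots\}$ and covering $G$ by the closed sets $T_{i,j,k}=\{x\mid [g,{}_kx]=s_i,\ [s_i,x]=s_j\}$, Theorem~\ref{bct} produces a coset $Nd$ of an open normal subgroup on which $[g,{}_kx]=s_i$ and $[s_i,x]=s_j$, so that $[s_i,N]=1$. If for some $g$ this $s_i$ is nontrivial, then $s_i$ has a centralizer of finite index, hence finitely many conjugates, so its normal closure $\langle s_i^{G}\rangle$ is central-by-finite; by Schur's theorem its derived subgroup is finite, and in either case (a nontrivial abelian normal subgroup, or the Fitting subgroup of the finite normal subgroup $\langle s_i^{G}\rangle'$, which is characteristic and hence normal in $G$) we obtain a nontrivial normal pronilpotent subgroup, so $F(G)\ne1$ and we are done. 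Hence it remains to treat the case where the element $s_i$ supplied above is always trivial, i.e. every $g$ satisfies a coset Engel condition $[g,{}_kx]=1$ for all $x$ in a coset of an open normal subgroup.

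In this remaining case I would feed the coprime machinery. For each prime $p$ put $M=O_{p'}(G)$, a closed normal $p'$-subgroup of $G$; any $p$-element $a\in G$ induces a coprime automorphism of $M$, and since $M\langle a\rangle$ inherits countable right Engel sinks, Lemma~\ref{l-copr5} gives that $[M,a]$ is finite. The aim is to promote this local finiteness, together with Lemma~\ref{l-copr4} and Theorem~\ref{thompson} (Thompson's bound on Fitting height through a centralizer of a coprime action), to a bound on the Fitting height of $G$, reducing in particular to the situation where all Sylow subgroups of $G$ are finite; at that point Lemma~\ref{l-finsyl} applies and yields $F(G)\ne1$, contradicting $F(G)=1$.

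I expect the main obstacle to be exactly this last reduction. The finiteness statements coming out of the coprime lemmas ($[M,a]$ finite, $C_M(a)$ of finite index, $F([M,a])\ne1$) only produce subgroups invariant under the acting $p$-element or under a single Sylow $p$-subgroup, not subgroups that are normal in $G$; converting them into a genuinely $G$-normal nontrivial pronilpotent subgroup (equivalently, bounding the Fitting height of the whole group so that Lemma~\ref{l-height} and the centralizer argument can close the loop) is the delicate point. This is where Theorem~\ref{thompson} must be invoked to pass from the centralizer data to a global Fitting-height bound before the final appeal to Lemma~\ref{l-finsyl}.
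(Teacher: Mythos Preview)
Your proposal has a genuine gap: you correctly identify the crux (turning local coprime finiteness into something $G$-normal) but leave it unresolved, and the route you sketch does not reach it.

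Two specific problems. First, the Baire-category detour in your second paragraph is a dead end here. In Lemma~\ref{l-finsyl} that argument continues only because, with all Sylow subgroups finite, $G$ has countably many open normal subgroups, allowing a \emph{second} Baire application over all cosets. Without that countability the ``coset Engel condition'' you extract goes nowhere, and indeed you never use it again. Second, working with $M=O_{p'}(G)$ and an arbitrary $p$-element $a$ gives $[M,a]$ finite via Lemma~\ref{l-copr5}, but there is no reason $O_{p'}(G)$ is large (it could be trivial even when $F(G)=1$), nor any way to convert this into a Fitting-height bound on $G$; your final paragraph concedes exactly this. The appeal to Theorem~\ref{thompson} is also misplaced: the paper does not invoke Thompson directly in this proposition (it was already absorbed into Lemma~\ref{l-finsyl} and hence into Lemma~\ref{l-copr5}).

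The paper's argument supplies precisely the missing normality mechanism. It works inside $\gamma_\infty(G)$: choose $p\in\pi(G/\gamma_\infty(G))$, take a Hall $p'$-subgroup $H$ of $\gamma_\infty(G)$, and use the Frattini argument $G=N_G(H)\gamma_\infty(G)$ to produce a nontrivial $p$-element $a$ \emph{normalizing} $H$. Now Lemma~\ref{l-copr5} gives $[H,a]$ finite, so an open $a$-invariant normal $N\le\gamma_\infty(G)$ has $[N\cap H,a]=1$, whence $[N,a]=[P,a]$ is a pro-$p$ group and lies in $F(G)$. If $[N,a]=1$, then $[\gamma_\infty(G),a]$ is central-by-finite, so Lemma~\ref{l-height} gives $F([\gamma_\infty(G),a])\ne1$ unless $[\gamma_\infty(G),a]=1$; in the latter case $a$ is Engel and Baer's theorem finishes. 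The Frattini step and this three-case split are the ideas your outline is missing.
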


\begin{proof}
Since $G$ is prosoluble, we have $G\ne
\gamma _{\infty}(G)$. Choose a prime $p\in \pi (G/\gamma _{\infty}(G))$. If $\gamma _{\infty}(G)$ is a pro-$p$  group, then $F(G)\ne 1$ and we are done. Otherwise, let $H$ be a Hall $p'$-subgroup of $\gamma _{\infty}(G)$. 
By an analogue of the Frattini argument we have $G=N_G(H)\gamma _{\infty}(G)$. Indeed, for any $x\in G$ the Hall $p'$-subgroups $H$ and $H^x$ of $\gamma _{\infty}(G)$ are conjugate in $\gamma _{\infty}(G)$, so that $H=H^{xy}$ for $y\in \gamma _{\infty}(G)$ and then $x\in N_G(H)y^{-1}$.

We can now choose a nontrivial $p$-element $a\in N_G(H)$ (so that $|a|=p^k$, where $k\in \N\cup \{\infty\}$). By Lemma~\ref{l-copr5} the subgroup $[H,a]$ is finite. Therefore there is an open normal $a$-invariant subgroup $N$ of $\gamma _{\infty}(G)$ such that $N\cap [H,a]=1$.
Then $[N\cap H,a]=1$.
Since $N=(H\cap N)P$ for an $a$-invariant Sylow $p$-subgroup $P$ of $\g_{\infty}(G)$, we have $[N,a]=[(H\cap N)P,a]=[P,a]$. Hence the subgroup $[N,a]$ is pronilpotent,  and therefore,
$$
[N,a]\leq F(N)\leq F(\gamma _{\infty}(G))\leq F(G).
 $$
 Thus, the proposition is proved if $[N,a]\ne 1$.

If $[N,a]=1$, then also $[N,[\gamma _{\infty}(G),a]]=1$. Then $[\gamma _{\infty}(G),a]$ has a central subgroup of finite index and therefore has finite Fitting height. By Lemma~\ref{l-height}, $\gamma _{\infty}([\gamma _{\infty}(G),a])$ is finite, and therefore $F([\gamma _{\infty}(G),a])\ne 1$ unless $[\gamma _{\infty}(G),a]=1$.
Since
$$
F([\gamma _{\infty}(G),a])\leq F(\gamma _{\infty}(G))\leq F(G),
$$
the proof is complete if  $[\gamma _{\infty}(G),a]\ne 1$.
 Finally, if $[\gamma _{\infty}(G),a]=1$, then $a$ is an Engel element since $G/\gamma _{\infty}(G)$ is locally nilpotent by Theorem~\ref{t2}. Then the normal subgroup  $[G,a]\langle a\rangle$ is pronilpotent by Baer's theorem \cite[Satz~III.6.15]{hup}, and $F(G)\ne 1$.
\end{proof}

We are now ready to prove the main result of this section.

\begin{theorem}\label{t3}
Suppose that $G$ is a prosoluble group in which every element has a countable right Engel sink. Then $G$ has a finite normal subgroup $N$ such that $G/N$ is locally nilpotent.
\end{theorem}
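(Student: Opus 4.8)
The plan is to reduce everything to the finiteness of $\gamma_\infty(G)$, and then to deduce that finiteness from the fact that $G$ has finite Fitting height. First I would observe that it suffices to prove that $\gamma_\infty(G)$ is finite: the quotient $G/\gamma_\infty(G)$ satisfies $\gamma_\infty(G/\gamma_\infty(G))=1$, so it is pronilpotent, and, inheriting the hypothesis on right Engel sinks, it is locally nilpotent by Theorem~\ref{t2}; hence one may take $N=\gamma_\infty(G)$. By Lemma~\ref{l-height} the finiteness of $\gamma_\infty(G)$ follows once we know that $G$ has finite Fitting height, so the entire problem is reduced to bounding the Fitting height.

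The engine for controlling the Fitting series is Proposition~\ref{pr-f}, applied to quotients. Since the hypothesis passes to every section, for each term $F_i(G)$ with $F_i(G)\ne G$ the nontrivial prosoluble group $G/F_i(G)$ has nontrivial Fitting subgroup, i.e. $F_{i+1}(G)/F_i(G)=F(G/F_i(G))\ne 1$; thus the Fitting series ascends strictly, and in particular a prosoluble group with our hypothesis is pronilpotent precisely when $F_2(G)=F(G)$. To bring in the available finiteness, I would use that $\gamma_\infty(F_2(G))$ is finite by Lemma~\ref{l-height} (Fitting height two) and lies in $F(G)$, since $F_2(G)/F(G)=F(G/F(G))$ is pronilpotent. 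Passing to $\bar G=G/\gamma_\infty(F_2(G))$, the image of $F_2(G)$ is pronilpotent and, because $F(G/F(G))$ is the \emph{largest} normal pronilpotent subgroup of $G/F(G)$, one checks that the full preimage of $F(\bar G)$ cannot exceed $F_2(G)$; hence $F(\bar G)=F_2(G)/\gamma_\infty(F_2(G))$. In other words, a single such quotient lowers the Fitting height by exactly one while altering $G$ only by a finite normal subgroup, so finite Fitting height would propagate all the way down to a pronilpotent quotient with finite kernel.

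The main obstacle is precisely the finiteness of the Fitting height itself: Proposition~\ref{pr-f} yields strict ascent of the Fitting series but not its termination, and an a priori infinite Fitting series is not excluded by compactness or topology alone (a strictly increasing chain of closed characteristic subgroups can persist). To close this gap I would feed the countability of the sinks back through a Baire-category argument in the spirit of Lemmas~\ref{l-ksm} and~\ref{l-finsyl}, combined with the strong finiteness of commutators $[H,\varphi]$ furnished by coprime actions in Lemma~\ref{l-copr5}: the goal is to bound the Fitting height uniformly across all finite continuous quotients of $G$, which for a profinite group is equivalent to finite Fitting height. Once that uniform bound is in hand, Lemma~\ref{l-height} gives that $\gamma_\infty(G)$ is finite, and the theorem follows with $N=\gamma_\infty(G)$.
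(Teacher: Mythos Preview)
Your reduction is right: it suffices to prove $\gamma_\infty(G)$ is finite, and by Lemma~\ref{l-height} this follows once $G$ has finite Fitting height. You also correctly extract from Proposition~\ref{pr-f} the key dichotomy that a prosoluble group with the hypothesis is pronilpotent if and only if $F_2=F$. The gap is exactly where you flag it: your passage to $\bar G=G/\gamma_\infty(F_2(G))$ only achieves $F(\bar G)=\overline{F_2(G)}$, so $\bar G/F(\bar G)\cong G/F_2(G)$ and the Fitting height, if infinite, remains infinite. Iterating this quotient just climbs the Fitting series one step at a time and never terminates; the proposed Baire/Lemma~\ref{l-copr5} rescue is too vague to close the argument.

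The missing idea is to pass to an open \emph{subgroup} rather than a quotient. Since $\gamma_\infty(F_2(G))$ is finite, choose an open normal subgroup $N\trianglelefteq G$ with $N\cap \gamma_\infty(F_2(G))=1$. Because $N$ is open normal one has $F_i(N)=N\cap F_i(G)$, so $\gamma_\infty(F_2(N))\leq N\cap \gamma_\infty(F_2(G))=1$, i.e.\ $F_2(N)=F(N)$. Now your own dichotomy (Proposition~\ref{pr-f} applied to $N/F(N)$) forces $N=F(N)$, hence $N\leq F(G)$ and $G/F(G)$ is finite. This gives finite Fitting height immediately, and Lemma~\ref{l-height} finishes the proof with $N=\gamma_\infty(G)$; no further Baire-category step is needed.
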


\begin{proof}
By Theorem~\ref{t2} it is sufficient to prove that $\gamma _{\infty}(G)$ is finite.
Since $\g_{\infty}(F_2(G))$ is finite by Lemma~\ref{l-height}, there is an open normal subgroup $N$ of $G$ such that $N\cap \g_{\infty}(F_2(G))=1$. It follows that $\g_{\infty}(F_2(N))=1$, which means that $F_2(N)=F(N)$.  It follows from  Proposition~\ref{pr-f} that $F(N)=N$ is locally nilpotent, so that $N\leq F(G)$. Hence the quotient group $G/F(G)$ is finite, and therefore the Fitting height of $G$ is finite. We obtain that $\g_{\infty}(G)$ is finite by Lemma~\ref{l-height}.
\end{proof}

Here we also derive the following corollary for a virtually prosoluble group (that is, a group with a prosoluble open normal subgroup), which will be needed in the sequel.

\begin{corollary}\label{c-virt}
Suppose that $G$ is a virtually prosoluble group in which every element has a countable right Engel sink. Then $G$ has a finite normal subgroup $N$ such that $G/N$ is locally nilpotent.
\end{corollary}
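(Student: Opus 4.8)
The plan is to reduce to the prosoluble case of Theorem~\ref{t3} and then control the finite nonsoluble ``top''. Let $H$ be a prosoluble open normal subgroup of $G$. It inherits countable right Engel sinks, so by Theorem~\ref{t3} it is finite-by-(locally nilpotent); since $H/N_H$ is then pronilpotent for a finite normal $N_H$, the subgroup $\gamma_{\infty}(H)$ is finite, and being characteristic in $H$ it is normal in $G$. As $\gamma_{\infty}(H)$ is finite, the conclusion for $G$ is equivalent to that for $G/\gamma_{\infty}(H)$, so I may assume $H$ is pronilpotent. Then $H$ lies in the Fitting subgroup $F:=F(G)$, so $F$ is open and pronilpotent and $\bar G:=G/F$ is finite. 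By Theorem~\ref{t2} it now suffices to show that $\gamma_{\infty}(G)$ is finite: the quotient $G/\gamma_{\infty}(G)$ is pronilpotent with countable right Engel sinks, hence locally nilpotent, and $N=\gamma_{\infty}(G)$ is the required subgroup. Since $\gamma_{\infty}(G)F/F=\gamma_{\infty}(\bar G)$ is finite, everything reduces to bounding $\gamma_{\infty}(G)\cap F$; and as this is a closed $G$-invariant subgroup of the pronilpotent group $F=\prod_p F_p$, it is enough to bound its Sylow components $\gamma_{\infty}(G)\cap F_p$ and to show all but finitely many vanish.

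The decisive observation is that for every $a\in G$ the subgroup $F\langle a\rangle$ is prosoluble, being an extension of the pronilpotent (hence prosoluble) group $F$ by the procyclic group $F\langle a\rangle/F$, so that Lemma~\ref{l-prosol-by-prosol} applies; hence Theorem~\ref{t3} and, when $a$ acts coprimely, Lemma~\ref{l-copr5} are available. I first dispose of all primes $p\notin\sigma:=\pi(\bar G)$ simultaneously. Writing $F=F_{\sigma'}\times F_{\sigma}$ with $F_{\sigma'}=\prod_{p\notin\sigma}F_p$, the quotient $G/F_{\sigma'}$ is a pro-$\sigma$ group, so by the profinite Schur--Zassenhaus theorem $G=F_{\sigma'}\rtimes K$ for a pro-$\sigma$ Hall subgroup $K$ acting coprimely on $F_{\sigma'}$. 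Because $F_\sigma=K\cap F$ centralizes $F_{\sigma'}$ inside the pronilpotent group $F$, this action factors through the finite group $\bar G\cong K/(K\cap F)$. For each $\bar g\in\bar G$ I choose a $\sigma$-element lift $g\in K$; it acts coprimely on the prosoluble group $F_{\sigma'}$, so $[F_{\sigma'},g]$ is finite by Lemma~\ref{l-copr5}. As $\bar G$ is finite and $F_{\sigma'}$ is locally nilpotent by Theorem~\ref{t2}, the subgroup $[F_{\sigma'},K]$ is generated by finitely many finite subgroups of a locally nilpotent group, hence is finite; it is normal in $G$, and modulo it $G$ becomes the direct product $F_{\sigma'}\times K$, so that $\gamma_{\infty}(G)\cap F_{\sigma'}$ is finite.

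There remain the finitely many primes $p\in\sigma$, and this is where I expect the main obstacle to lie. After factoring out the normal $p'$-part $\prod_{q\ne p}F_q$ the situation becomes that of a virtually pro-$p$ group in which the finite, possibly nonsoluble, group $\bar G$ acts on the pro-$p$ group $F_p$ \emph{non-coprimely}, so Lemma~\ref{l-copr5} no longer applies directly. My plan is to salvage coprimeness wherever possible: each $p'$-element of $\bar G$ lifts to a $p'$-element of $G$ (taken in a Hall $p'$-subgroup of the prosoluble group $F\langle a\rangle$), and for such lifts $g'$ Lemma~\ref{l-copr5} again yields finite commutators $[F_p,g']$; together with the local nilpotency of $F_p$ this controls the contribution of the subgroup of $\bar G$ generated by $p'$-elements, which already accounts for the perfect (nonsoluble) part of the action. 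The hard residual step is the genuinely $p$-singular action of $\bar G$ on $F_p$, which I expect to require the pro-$p$ Lie-ring machinery of Section~\ref{t2}'s prerequisites (Proposition~\ref{pr-pro-p} together with Theorem~\ref{tz}) adapted to the virtually pro-$p$ setting rather than a coprime argument. Once each $\gamma_{\infty}(G)\cap F_p$ with $p\in\sigma$ is shown finite, combining with the previous paragraph gives $\gamma_{\infty}(G)\cap F$ finite, hence $\gamma_{\infty}(G)$ finite, which completes the proof.
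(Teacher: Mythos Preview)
Your opening reduction---passing to $G/\gamma_\infty(H)$ so that $F=F(G)$ is open and aiming for $\gamma_\infty(G)$ finite---matches the paper exactly, and your treatment of the primes outside $\sigma=\pi(G/F)$ is correct (the subgroup $[F_{\sigma'},K]$ is indeed a finitely generated torsion subgroup of the locally nilpotent group $F_{\sigma'}$, hence finite). But you yourself identify the gap: for $p\in\sigma$ you have no argument, only the hope that Lie-ring methods for the ``$p$-singular action'' will work. They are not needed, and this is where your approach diverges from the paper's and stalls.

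The paper avoids the $p$-singular problem entirely by a prime-switching trick combined with a Schur multiplier bound. Rather than splitting $F$ by primes, it runs induction on $|G/F|$, and the substantive case is $G/F$ a non-abelian finite simple group. Fix a prime $p\mid |G/F|$ and a $p$-element $g\in G\setminus F$; then $g$ acts coprimely on the Hall $p'$-subgroup $T$ of $F$, so $[T,g]$ is finite by Lemma~\ref{l-copr3}. Taking the normal closure and passing to the quotient, one may assume every conjugate of $g$ centralizes $T$; since these conjugates generate $G$ modulo $F$, one gets $\gamma_\infty(G)=\gamma_\infty(PG_1)$ with $P$ the Sylow $p$-subgroup of $F$, and a Schur multiplier argument (the central quotient $\gamma_\infty(\bar G_1)/(\gamma_\infty(\bar G_1)\cap\bar T)\cong G/F$ is finite simple) shows $\gamma_\infty(G)\cap T$ is finite. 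This reduces to $F$ being a pro-$p$ group. Now---and this is the key move you are missing---pick a \emph{different} prime $r\ne p$ dividing $|G/F|$ (it exists because $G/F$ is non-abelian simple) and repeat the same argument with $r$ in place of $p$: this reduces to $F$ being a pro-$r$ group, hence $F=1$, and the simple base case is done. The induction step for non-simple $G/F$ is then routine. No Lie-ring analysis of non-coprime actions is required.
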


\begin{proof}
By Theorem~\ref{t2} it is sufficient to show that $\gamma _{\infty}(G)$ is finite.
By hypothesis, $G$ has an open normal prosoluble subgroup $H$.
 By Theorem~\ref{t3}, $\gamma _{\infty}(H)$ is finite. Therefore, passing to the quotient group,  we can assume that $\gamma _{\infty}(H)=1$ and the Fitting subgroup
$F(G)$ is open.

Since $G/F(G)$ is finite, we can use induction on $|G/F(G)|$.
       The basis of this induction includes the trivial case $G/F(G)=1$ when $\gamma _{\infty}(G)=1$. But the bulk of the proof deals with the case where $G/F(G)$ is a finite simple group.  If $G/F(G)$ is abelian, then $G$ has Fitting height 2 and  $\gamma _{\infty }( G)$ is finite by Lemma~\ref{l-height} and the proof is complete.

Thus, suppose that $G/F(G)$ is a non-abelian finite simple group.
Let $p$ be a prime divisor of $|G/F(G)|$, and $g\in G\setminus F(G)$ an element of order $p^n$, where $n$ is either a positive integer or $\infty$ (so $p^n$ is a Steinitz number). Let $T$ be the Hall $p'$-subgroup of $F(G)$. By Lemma~\ref{l-copr3} the subgroup $[T , g]$ is finite.

Since $[T, g]$ is normal in $F(G)$, its normal closure $R=\langle [T, g]^G\rangle $ in $G$ is a product of finitely many conjugates and is therefore also finite.
Therefore it is sufficient to prove that $\gamma _{\infty }(G/R)$ is finite. Thus, we can assume that $R=1$. Note that then $[T, g^a]=1$ for any conjugate $g^a$ of $g$.

Choose a transversal $\{u_1,\dots, u_k\}$ of $G$ modulo $F(G)$.
 Let $G_1=\langle g^{u_1}, \dots ,g^{u_k}\rangle$. Clearly, $G_1F(G)/F(G)$ is generated by the conjugacy class of the image of $g$. Since $G/F(G)$ is simple, we have $G_1F(G)=G$. By our assumption, the Hall $p'$-subgroup $T$ of $F(G)$
 is centralized by all elements $g^{u_i}$. Hence,  $[G_1, T]=1$. Let $P$ be the Sylow $p$-subgroup of $F(G)$ (possibly, trivial). Then also $[PG_1, T]=1$, and therefore
 $$\gamma _{\infty }(G)=\gamma _{\infty }(G_1F(G))= \gamma _{\infty }(PG_1).$$

Let the bar denote images in $\bar G=G/P$. Note that $\gamma _{\infty}(\bar G)=\gamma _{\infty}(\bar G_1)$, while $F(\bar G)=\bar T$ and $\bar G/\bar T=\bar G_1\bar T/\bar T\cong F/F(G)$ is a non-abelian finite simple group. Hence, $\bar G=\gamma _{\infty}(\bar G_1)\bar T$. Therefore, since $[\gamma _{\infty}(\bar G_1), \bar T]=1$,
$$
\gamma _{\infty}(\bar G_1)=[\gamma _{\infty}(\bar G_1), \bar G_1]=[\gamma _{\infty}(\bar G_1),\gamma _{\infty}(\bar G_1)\bar T]=[\gamma _{\infty}(\bar G_1),\gamma _{\infty}(\bar G_1)].
$$
As a result, $\gamma _{\infty}(\bar G_1)\cap \bar{T}$ is contained both in the centre and the derived subgroup of $\gamma _{\infty}(\bar G_1)$, and therefore is isomorphic to a subgroup of the Schur multiplier of the finite group $\gamma _{\infty}(\bar G_1)/ (\gamma _{\infty}(\bar G_1)\cap \bar{T})\cong G/F(G)$. Since the Schur multiplier of a finite group is finite \cite[Hauptsatz~V.23.5]{hup}, we obtain that $\gamma _{\infty}(\bar G_1)\cap \bar{T}$ is finite. Since $\bar T$ is canonically isomorphic to $T$, it follows that
$$
\gamma _{\infty }(G)\cap T\cong\gamma _{\infty }(\bar G)\cap \bar T=\gamma _{\infty }(\bar G_1)\cap \bar T
$$
is also finite. Therefore  we can assume that $T=1$, in other words, that $F(G)$ is a $p$-group.

Since $G/F(G)$ is a non-abelian simple group, we can choose another prime $r\ne p$ dividing $|G/F(G)|$ and repeat the same arguments as above with $r$ in place of $p$. As a result, we reduce the proof to the case $F(G)=1$, where the result is obvious.

We now finish the proof of Corollary~\ref{c-virt} by induction on  $|G/F(G)|$. The basis of this induction where $G/F(G)$ is a simple group was proved above. Now suppose that $G/F(G)$ has  a nontrivial proper normal subgroup with full inverse image $N$, so that $F(G)<N\lhd G$. Since $F(N)=F(G)$, by induction applied to $N$ the group $\gamma _{\infty }(N)$ is finite. Since $N/\gamma _{\infty }(N)\leq F( G/\gamma _{\infty }(N))$,  by induction applied to $G/\gamma _{\infty }(N)$ the group $ \gamma _{\infty }(G/\gamma _{\infty }(N) )$ is also finite. As a result, $\gamma _{\infty }(G) $ is finite, as required.
\end{proof}

\section{Profinite groups}

We approach the general case of profinite groups by obtaining bounds for the so-called nonprosoluble length. These bounds follow from the bounds for nonsoluble length of the corresponding finite quotients. We begin with the relevant definitions.

 The  \textit{nonsoluble length} $\lambda (H)$  of a finite group $H$ is defined as the minimum number of nonsoluble factors in a normal series in which every  factor  either is soluble or is a direct product of non-abelian simple groups. (In particular, the group is soluble if and only if its nonsoluble length is $0$.) Clearly, every finite group has a normal series with these properties, and therefore its nonsoluble length is well defined.  It is easy to see that the nonsoluble length $\lambda (H)$ is equal to the least positive integer $l$ such that there is a series of characteristic subgroups
\begin{equation*}
1=L_0\leqslant R_0 <  L_1\leqslant R_1<  \dots \leqslant R_{l}=H
\end{equation*}
in which each quotient $L_i/R_{i-1}$ is a (nontrivial) direct product of non-abelian simple groups, and each quotient $R_i/L_{i}$ is soluble (possibly trivial).

We shall use the following result of Wilson \cite{wil83}, which we state in the special case of $p=2$ using the terminology of nonsoluble length.

\begin{theorem}[{see \cite[Theorem~2*]{wil83}}]\label{t-wil83}
Let $K$ be a normal subgroup of a finite group~$G$. If a Sylow $2$-subgroup $Q$ of $K$ has a coset $tQ$ consisting of elements of order dividing $2^k$, then the nonsoluble length of $K$ is at most $k$.
\end{theorem}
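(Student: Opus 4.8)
The statement is a theorem of Wilson cited here as a black box; the plan below indicates how one would prove it directly. It is convenient to prove the equivalent assertion $(\star)$: if $K\trianglelefteq G$, $Q\in\mathrm{Syl}_2(K)$ and $t\in G$ are such that every element of the coset $tQ$ has $2$-power order, then $tQ$ contains an element of order at least $2^{\lambda(K)}$. Granting $(\star)$, the hypothesis that $tQ$ consists of elements of order dividing $2^k$ forces $2^{\lambda(K)}\le 2^k$, i.e. $\lambda(K)\le k$, as required. The only serious external input is the Feit--Thompson odd order theorem, together with standard facts about nonsoluble length (that, for a group with trivial soluble radical, the socle is a single nonsoluble factor, the solubility of $\mathrm{Out}(S)$ being absorbed into this bookkeeping).

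The plan is to prove $(\star)$ by induction on $|K|$. If $\lambda(K)=0$ the claim is vacuous, since any element of $tQ$ has order at least $2^0=1$; this is where Feit--Thompson enters, in the guise that a group of nonsoluble length $0$ is exactly a soluble group. For the inductive step let $R$ be the soluble radical of $K$, which is characteristic in $K$ and hence normal in $G$. Passing to $G/R$ leaves $\lambda$ unchanged and can only decrease orders, so $\bar t\bar Q$ in $\bar K=K/R$ again consists of $2$-elements with $\bar Q\in\mathrm{Syl}_2(\bar K)$; if $R\ne 1$ the inductive hypothesis applied to $\bar K$, together with lifting a representative back to $tQ$, gives the claim. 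Hence I may assume $R=1$, so that $N:=\mathrm{soc}(K)=S_1\times\cdots\times S_m$ is a direct product of nonabelian simple groups with $C_K(N)=1$, and $\lambda(K/N)=\lambda(K)-1$.

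Applying the inductive hypothesis to $\bar K=K/N$ (of smaller order, with $\lambda(\bar K)=\lambda(K)-1$) will yield a coset representative $x=tq_0\in tQ$ whose image $\bar x$ has order at least $2^{\lambda(K)-1}$. It then remains to manufacture from this a representative $tq_1\in tQ$ whose \emph{genuine} order in $K$ is at least $2^{\lambda(K)}$, that is, to recover one extra factor of $2$ from the bottom simple layer $N$. The point is that $x$ permutes the factors $S_i$ and acts nontrivially on $N$ (as $C_K(N)=1$); by studying the $2$-element induced by $x$ on each $\langle x\rangle$-orbit of factors, one uses that every $S_i$ has even order by Feit--Thompson to adjust $q_0$ to some $q_1\in Q$ so that the top $2$-power of $tq_1$ is forced nontrivially into $N$, thereby doubling the order. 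This doubling lemma is the technical heart of the argument.

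The main obstacle is precisely this last step: proving, from nothing but the hypothesis that the whole coset $tQ$ consists of $2$-elements, that one can always choose a coset representative whose order is at least twice the best order achievable modulo the socle. This requires a Hall--Higman-style $2$-local computation that simultaneously controls the permutation action of $t$ on the components $S_1,\dots,S_m$ and the action of its powers inside a single component, exploiting fixed-point and commutator structure in the nonabelian simple factors. Once the doubling lemma is in place the induction closes and $(\star)$, hence the theorem, follows.
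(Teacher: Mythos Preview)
The paper does not prove this statement: Theorem~\ref{t-wil83} is quoted from Wilson \cite{wil83} and used as a black box, so there is no proof in the paper to compare your attempt against. You acknowledge this yourself.

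Your reformulation $(\star)$ and the inductive scaffolding (pass to $K/R$ to kill the soluble radical, then peel off the semisimple socle $N$ and apply induction to $K/N$ with $\lambda(K/N)=\lambda(K)-1$) are the natural shape for such an argument and are indeed in the spirit of Wilson's proof. One minor point: since $t$ lives in $G$ rather than in $K$, the induction is really on the pair $(G,K)$ (or on $|G|$), not on $|K|$ alone; this is harmless but worth saying.

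The substantive issue is the ``doubling lemma''. You correctly flag it as the heart of the matter, but what you have written is a description of what needs to be true rather than an argument. Concretely: having found $x\in tQ$ with $\bar x$ of order $2^{\ell}$ and (without loss) $x^{2^{\ell}}=1$, you want $q\in Q\cap N$ with $(xq)^{2^{\ell}}\ne 1$. When $x$ has an orbit of length $2^{\ell}$ on the simple factors, choosing $1\ne q$ inside a single factor of that orbit works immediately (the conjugates $q^{x^i}$ land in distinct factors). The difficulty is the case where every $x$-orbit on $\{S_1,\dots,S_m\}$ has length at most $2^{\ell-1}$: then $x^{2^{\ell-1}}$ normalizes each $S_i$, and one must exploit $C_K(N)=1$ together with the freedom to vary $x$ over the whole coset $tQ$ (not merely over $x(Q\cap N)$) to force either a long orbit or a nontrivial inner contribution from some factor. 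Your sketch does not indicate how this case is handled, and it is exactly here that Wilson's argument does real work. As written, the proposal is an accurate roadmap but not a proof.
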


We now turn to profinite groups. It is natural to say that a profinite group $G$ has finite \textit{nonprosoluble length} at most $l$ if $G$ has a normal series \begin{equation*}
1=L_0\leqslant R_0 <  L_1\leqslant R_1<  \dots \leqslant R_{l}=G
\end{equation*}
in which each quotient $L_i/R_{i-1}$ is a (nontrivial) Cartesian product of non-abelian finite simple groups, and each quotient $R_i/L_{i}$ is prosoluble (possibly trivial).
 As a special case of a general result in Wilson's paper \cite{wil83} we have the following.

\begin{lemma}[{see \cite[Lemma~2]{wil83}}]\label{l-nsl}
If, for some positive integer $m$, all continuous finite quotients of a profinite group~$G$ have nonsoluble length at most $m$, then $G$ has finite nonprosoluble length at most~$m$.
\end{lemma}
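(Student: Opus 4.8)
The plan is to reconstruct a witnessing normal series for $G$ as an inverse limit of the witnessing series of the finite quotients $G/N$, as $N$ ranges over the open normal subgroups of $G$. The point requiring care is that the subgroups realizing the nonsoluble length of a finite group need not map onto one another under quotient maps, so there is no obvious way to pick the series coherently. I circumvent this by invoking compactness --- the inverse limit of a system of nonempty finite sets over a directed index set is nonempty --- rather than attempting a canonical choice.

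For each open normal subgroup $N$ of $G$, let $\Sigma(N)$ be the finite set of all chains of normal subgroups
$$1=L_0\le R_0\le L_1\le R_1\le\dots\le R_m=G/N$$
in which every factor $L_i/R_{i-1}$ is a (possibly trivial) direct product of non-abelian simple groups and every factor $R_i/L_i$ is soluble. Since $\lambda(G/N)\le m$, a witnessing series of length $\lambda(G/N)$ exists and can be padded by repeated terms to a member of $\Sigma(N)$, so each $\Sigma(N)$ is nonempty. For $N_2\le N_1$ the quotient map $G/N_2\to G/N_1$ sends a chain in $\Sigma(N_2)$ to its image chain, which again lies in $\Sigma(N_1)$, because a surjective image of a direct product of non-abelian simple groups is again such a product (or trivial) and a surjective image of a soluble group is soluble. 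This makes $\{\Sigma(N)\}$ an inverse system of nonempty finite sets over the set of open normal subgroups, directed by reverse inclusion, so by compactness there is a compatible family $(L_i(N),R_i(N))_N$.

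Passing to the limit, I set $L_i=\bigcap_N \pi_N^{-1}(L_i(N))$ and $R_i=\bigcap_N \pi_N^{-1}(R_i(N))$, where $\pi_N\colon G\to G/N$ is the projection; these are closed normal subgroups of $G$ forming a chain from $1$ to $G$. Compatibility of the family guarantees that the factors are the inverse limits of the corresponding finite factors: each $R_i/L_i$ is an inverse limit of finite soluble groups and hence prosoluble, while each $L_i/R_{i-1}$ is an inverse limit, with surjective bonding maps, of finite direct products of non-abelian simple groups. The main technical step is to identify this latter limit as a Cartesian product of non-abelian finite simple groups: a surjection between finite direct products of non-abelian simple groups is, up to isomorphism, the projection onto a subset of the coordinates (since the simple factors are exactly the minimal normal subgroups and every normal subgroup is a subproduct), and an inverse limit of such projections is precisely the Cartesian product of the factors that survive. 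Thus each $L_i/R_{i-1}$ has the required semisimple form.

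Finally, I delete the indices with $L_i=R_{i-1}$ (the trivial semisimple factors introduced by padding) and merge the remaining terms. The result is a normal series for $G$ with prosoluble factors $R_i/L_i$ and at most $m$ nontrivial factors $L_i/R_{i-1}$, each a Cartesian product of non-abelian finite simple groups, which is exactly a witness that the nonprosoluble length of $G$ is at most $m$. The hard part, as indicated, is the recognition of the semisimple factors as Cartesian products of non-abelian finite simple groups; the remainder is the routine machinery of inverse limits together with the elementary compactness principle for finite inverse systems.
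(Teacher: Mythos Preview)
The paper does not give its own proof of this lemma; it simply records it as a special case of \cite[Lemma~2]{wil83} and moves on. So there is no in-paper argument to compare against, and your proposal should be judged on its own merits as a reconstruction of Wilson's result.

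Your argument is correct and is essentially the standard one. The inverse-limit-of-finite-sets trick to produce a coherent family of witnessing series is exactly the right device, and your verification that the transition maps send $\Sigma(N_2)$ into $\Sigma(N_1)$ is sound (normal subgroups of a finite direct product of non-abelian simple groups are subproducts, so quotients are again such products). The passage from the compatible family to closed normal subgroups $L_i,R_i$ of $G$ and the identification of the factors $R_i/L_i$, $L_i/R_{i-1}$ with the inverse limits of the finite factors are routine profinite facts. The one step you flag as ``the main technical step'' --- that an inverse limit of finite semisimple groups with surjective bonding maps is a Cartesian product of non-abelian finite simple groups --- is indeed the only point requiring a moment's thought; your sketch (kernels are subproducts, so bonding maps are coordinate projections, and the limit is the product over the directed union of index sets) is the right idea and can be made precise without difficulty. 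One could alternatively argue that every continuous finite quotient of $L_i/R_{i-1}$ is semisimple and then invoke the known structure of such profinite groups, but your direct description is cleaner.
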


We  now prove a key proposition on bounds for the nonprosoluble length.

\begin{proposition}\label{pr-fnl}
Suppose that $G$ is a profinite group
in which every element has a countable right Engel sink. Then $G$ has finite nonprosoluble length.
\end{proposition}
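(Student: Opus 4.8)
The plan is to show that all continuous finite quotients of $G$ have nonsoluble length bounded by one and the same integer, and then to invoke Lemma~\ref{l-nsl}. In a finite quotient $\bar G=G/N$ the only available device for bounding $\lambda(\bar G)$ is Wilson's Theorem~\ref{t-wil83}, applied with $K=\bar G$: I must produce, uniformly in $N$, a coset $\bar t\bar Q$ of a Sylow $2$-subgroup $\bar Q$ of $\bar G$ all of whose elements have order dividing $2^{k}$. If $2\notin\pi(G)$ this is vacuous, since then every finite quotient has odd order and is therefore soluble, so the nonprosoluble length is $0$; hence I may assume that $G$ has a nontrivial pro-$2$ Sylow subgroup.

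The source of the uniform $k$ will be the countable right Engel sinks together with the Baire Category Theorem~\ref{bct}. For a fixed element $g$ with sink $\{s_1,s_2,\dots\}$, the closed sets $T_{i,j,k}=\{x\in G\mid [g,{}_kx]=s_i,\ [s_i,x]=s_j\}$ cover $G$, exactly as in the proofs of Lemmas~\ref{l-ksm} and~\ref{l-finsyl}, so Theorem~\ref{bct} furnishes a single triple $(i,j,k)$ and a coset $Nd$ of an open normal subgroup on which $[g,{}_kx]=s_i$ and $[s_i,x]=s_j$ are constant; this freezes one integer $k$ and yields a coset identity of the shape $[[g,{}_kx],y]=1$. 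The decisive gain over Lemma~\ref{l-ksm}, where the analogous exponent depends on the chosen element, is precisely this uniformity of $k$. I would then pass to a finite quotient $\bar G$ with $2\mid|\bar G|$, where involutions exist, and read the frozen Engel relation inside the virtually dihedral subgroups generated by a pair of involutions: there $[\bar t,{}_k\bar x]$ runs through $2$-power powers of $\bar t\bar x$, so a relation $[\bar t,{}_k\bar x]=1$ forces $(\bar t\bar x)^{2^{k}}=1$.

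Finally I would assemble these $2$-power bounds into the hypothesis of Theorem~\ref{t-wil83}: once the bound $(\bar t\bar x)^{2^{k}}=1$ is arranged to hold as $\bar x$ ranges over a full coset of a Sylow $2$-subgroup $\bar Q$ of $\bar G$, Wilson's theorem gives $\lambda(\bar G)\le k$, and since $k$ is independent of the quotient, Lemma~\ref{l-nsl} yields that $G$ has nonprosoluble length at most $k$. The hard part is exactly this assembly. The Baire-category argument controls commutators along one coset and, through the dihedral calculation, the orders of products $\bar t\bar x$ for \emph{individual} involutions, whereas Wilson's theorem demands a \emph{single} coset of the \emph{whole} Sylow $2$-subgroup consisting of elements of bounded $2$-power order. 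Bridging this gap --- aligning the coset $Nd$ produced by Baire category with a Sylow $2$-subgroup, covering elements of large (or infinite) $2$-power order rather than only involutions, and keeping $k$ independent of the quotient --- is where the substance of the proof lies, and where passing from finite to merely countable sinks makes the estimates genuinely delicate.
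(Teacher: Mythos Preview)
Your sketch correctly identifies the target (a uniform bound on $\lambda(\bar G)$ via Theorem~\ref{t-wil83} and then Lemma~\ref{l-nsl}) but leaves precisely the step that carries the weight unresolved, and the route you outline does not close it. The Baire argument you describe fixes a single element $g$ and produces a coset $Nd$ and an integer $k$ with $[g,{}_kx]=s_i$ constant on $Nd$; both $k$ and the coset depend on $g$. Your dihedral calculation needs $[\bar t,{}_k\bar x]=1$ with $\bar t,\bar x$ \emph{both} involutions, so you would need $g$ to map to an involution, $s_i$ to be trivial, and $Nd$ to project onto a coset of a full Sylow $2$-subgroup in every finite quotient---none of which your argument supplies. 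You acknowledge this gap, but there is no mechanism in the proposal (or any evident variant of it) that upgrades an identity for one fixed $g$ to the exponent condition on an entire Sylow coset that Wilson's theorem requires.

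The paper's proof proceeds quite differently and uses two results already established earlier as essential inputs. First, Theorem~\ref{t2} gives that the Sylow $2$-subgroup $T$ is locally nilpotent, so Baire category is applied not to Engel sinks but to the sets $S_i=\{(x,y)\in T\times T:\langle x,y\rangle\text{ nilpotent of class }\le i\}$, yielding cosets $aN,bN$ of an open $N\le T$ and a bound $c$ with $\langle x,y\rangle$ nilpotent of class $\le c$ for all $x\in aN$, $y\in bN$. Second---and this is the step your outline lacks entirely---one arranges $N$ to be a Sylow $2$-subgroup of an open normal $K\le G$, invokes Lemma~\ref{l-dop} to write $G=KH$ with $K\cap H$ pronilpotent, and then applies the virtually prosoluble case (Corollary~\ref{c-virt}) to $H$ to make $\gamma_\infty(H)$ finite; this lets one choose the representative $a$ of \emph{finite} $2$-power order $2^n$. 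From nilpotency class $c$ and $|a|=2^n$ one extracts an Engel-type relation $[z,{}_c y^{2^{n(c-1)}}]=1$ for $z\in N$, $y\in bN$. In a finite quotient this relation is not read inside dihedral subgroups but is used to bound the orbits of elements of $\bar b\bar N$ permuting the simple factors of the generalised Fitting subgroup of $\bar K/R_0$: a long orbit would produce a nontrivial commutator violating the relation. This forces a uniform power $y^{2^u}$ into $R_1$, so the coset $\bar b\bar NR_1/R_1$ of the Sylow $2$-subgroup of $\bar K/R_1$ has exponent dividing $2^u$, and Theorem~\ref{t-wil83} applies. The dependence on Corollary~\ref{c-virt} is what makes the countable case delicate; without a finite-order representative $a$ no uniform exponent emerges.
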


\begin{proof}
Let $ H=\bigcap G^{(i)} $
be the intersection of the derived series of $G$ (where $G^{(1)}=[G,G]$ and by induction $G^{(i+1)}=[G^{(i)},G^{(i)}]$).
Then $H=[H,H]$. Indeed,
if $H\ne [H,H]$, then the quotient $G/[H,H]$ is a prosoluble group by Lemma~\ref{l-prosol-by-prosol}, whence $\bigcap G^{(i)}=H\leq [H,H]$, a contradiction.
Since the quotient $G/H$ is prosoluble, it is sufficient to prove the proposition for $H$. Thus, we can assume from the outset that $G=[G,G]$.

Let $T$ be a Sylow $2$-subgroup of $G$. By Theorem~\ref{t2} the group $T$ is locally nilpotent. Consider the subsets of the direct product $T\times T$
$$
S_{i}=\{(x,y)\in T\times T\mid \text{the subgroup }\langle x,y\rangle\text{ is nilpotent of class at most }i\}.
$$
Note that each subset $S_{i}$ is closed in the product topology of $T\times T$, because the condition defining $S_i$ means that all commutators of weight $i+1$ in $x,y$ are trivial. Since every $2$-generator subgroup of $T$ is nilpotent, we have
$$
\bigcup _iS_{i}=T\times T.
$$
By
Theorem~\ref{bct} one of the sets $S_i$ contains an open subset of $T\times T$. This means that there are cosets $aN$ and $bN$ of an open normal subgroup $N$ of $T$ and a positive integer $c$  such that
\begin{equation}\label{e-2nilp}
\langle x,y\rangle\text{ is nilpotent of class }c\text{ for any }x\in aN,\; y\in bN.
\end{equation}
(The subsequent arguments include the case where $N=T$, even with certain simplifications.)

Let $K$ be an open normal subgroup of $G$ such that $K\cap T\leq N$. If we replace $N$ by $K\cap T$, then \eqref{e-2nilp} still holds with the same $a,b$. Hence we can assume that $N$ is a Sylow $2$-subgroup of $K$.

We now apply the following general fact (which, for example, immediately follows from \cite[Lemma 2.8.15]{rib-zal}).

\begin{lemma}\label{l-dop} Let $G$ be a profinite group and $K$ a normal open subgroup of $G$. There exists a subgroup $H$ of $G$ such that
$G=KH$ and $K\cap H$ is pronilpotent.
\end{lemma}

Let $H$ be the subgroup given by this lemma for our group $G$ and subgroup $K$. Since $H$ is virtually pronilpotent and every element has a countable right Engel sink, by Corollary~\ref{c-virt} the subgroup $\gamma _\infty(H)$ is finite. Recalling our assumption that $G=[G,G]$, we obtain
$$
G=[G,G]=\gamma _{\infty}(G)\leq \gamma _{\infty}(HK)\leq \gamma _{\infty}(H)K.
$$
Thus, $G=\gamma _{\infty}(H)K$, where $\gamma _{\infty}(H)$ is a finite subgroup.

Hence we can choose the coset representative $a$ satisfying \eqref{e-2nilp}
in a conjugate of a Sylow $2$-subgroup of $\gamma _{\infty}(H)$, and therefore having finite order, say, $|a|=2^n$.

For any $y\in bN$ the $2$-subgroup $\langle a,y\rangle$ is nilpotent of class at most $c$, while  $a^{2^n}=1$. Then
\begin{equation}\label{e-ay2nc}
[a,y^{2^{n(c-1)}}]=1.
 \end{equation}
 This follows from well-known commutator formulae (and for any $p$-group); see, for example, \cite[Lemma~4.1]{shu00}.

In particular,  for any $z\in N$ by using \eqref{e-ay2nc} we obtain
\begin{equation}\label{e-2eng2}
[z,\,{}_c y^{2^{n(c-1)}}]=[az,\,{}_c y^{2^{n(c-1)}}]=1,
\end{equation}
since $\langle az,y^{2^{n(c-1)}}\rangle$ is  a subgroup of  $\langle az,y\rangle$, which is nilpotent of class $c$  by \eqref{e-2nilp}. (Note that in the case $N=T$ we could have put $K=G$, $H=1$, $a=b=1$, and $n=0$.)

Our aim is to show that there is a uniform bound, in terms of $|G:K|$, $c$, and $n$,  for the nonsoluble length of all continuous finite quotients of $G$.
Let $M$ be an open normal subgroup of $G$ and let the bar denote the images in $\bar G=G/M$. It is clearly sufficient to obtain a required bound for the nonsoluble length of $\bar K$.

Let $R_0$ be the soluble radical of $\bar K$, and $L_1$ the inverse image of the generalized Fitting subgroup of $\bar K/R_0$, so that
\begin{equation}\label{e-soc}
L_1/R_0=S_1\times S_2\times \dots\times S_k
\end{equation}
is a direct product of non-abelian finite simple groups. Note that $R_0$ and $L_1$ are normal subgroups of $\bar G$. The group $\bar G$ acting by conjugation induces a permutational action on the set $\{S_1,S_2,\dots ,S_k\}$. The kernel of the restriction of this permutational action to $\bar K$ is contained in the inverse image $R_1$ of the soluble radical of $\bar K/L_1$:
\begin{equation}\label{e-soc2}
\bigcap _iN_{\bar K}(S_i)\leq R_1.
\end{equation}
This follows from the validity of Schreier's conjecture on the solubility of the outer automorphism groups of non-abelian finite simple groups, confirmed by the classification of the latter, because $L_1/R_0$ contains its centralizer in $\bar K/R_0 $.

Let $e$ be the least positive integer such that $2^{e}\geq c$, and let $t= 2^{n(c-1)+e}$. We claim that for any   $y\in \bar b\bar N$ the element  $y^{2^t}$ normalizes each factor $S_i$ in \eqref{e-soc}.
Arguing by contradiction, suppose that the element $y^{2^t}$ has a nontrivial orbit on the set of the  $S_i$. Then the element $y^{2^{n(c-1)}}$ has an orbit of length $2^s\geq  2^{e+1}$ on this set; let $\{T_1,T_2,\dots , T_{2^s}\}$ be such an orbit cyclically permuted by $y^{2^{n(c-1)}}$.
Since non-abelian finite simple groups have even order (by the Feit--Thompson theorem \cite{fei-tho}) and the subgroups $S_i$ are subnormal in $\bar K/R_0$, each subgroup $S_i$ contains a nontrivial element of $\bar NR_0/R_0$. If $x$ is a nontrivial element of $T_1\cap  \bar NR_0/R_0$, then the commutator
$$
[x,\,{}_c \bar y^{2^{n(c-1)}}],
$$
written as an element of $T_1\times T_2\times \dots\times  T_{2^s}$, has a nontrivial component in $T_{c+1}$ since $2^s\geq  2^{e+1}>c$. This, however, contradicts~\eqref{e-2eng2}.

Thus, for any element  $y\in \bar b\bar N$ the power $y^{2^t}$ normalizes each factor $S_i$ in \eqref{e-soc}. Let $2^{d}$ be the highest power of $2$ dividing $|G:K|$, and let $u=\max\{t,d\}$. Then $y^{2^u}\in R_1$ by \eqref{e-soc2}, since $y^{2^u}\in \bar K$ and $y^{2^u}$ normalizes each $S_i$ in \eqref{e-soc} by the choice of $u$.

As a result, in the quotient $\bar G/R_1$ all elements of the coset $\bar b\bar NR_1/R_1$ of the Sylow $2$-subgroup $\bar NR_1/R_1$ of $\bar K/R_1$ have order dividing $2^u$. We can now apply  Theorem~\ref{t-wil83}, by which the nonsoluble length of $\bar K/R_1$ is at most $u$. Then the  nonsoluble length of $\bar K$ is at most $u+1$. Clearly, the nonsoluble length of $\bar G/\bar K$ is bounded in terms of  $|G:K|$.
As a result, since the number $u$ depends only on $|G:K|$, $n$, and $c$, the nonsoluble length of $\bar G$ is bounded in terms of these parameters only. Since this  holds for any continuous finite quotient of the profinite group $G$,  the group $G$ has finite nonprosoluble length by Lemma~\ref{l-nsl}. This completes the proof of Proposition~\ref{pr-fnl}.
\end{proof}

We are now ready to handle the general case of profinite groups using Corollary~\ref{c-virt} on virtually prosoluble groups and induction on the nonprosoluble length.
First we eliminate infinite Cartesian products of non-abelian finite simple groups.

\begin{lemma}\label{l-cart}
Suppose that $G$ is a profinite group that is a Cartesian product of non-abelian finite simple groups. If every element of $G$ has a countable right Engel sink, then $G$ is finite.
\end{lemma}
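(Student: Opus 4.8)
The plan is to prove that the index set of the product must be finite, which together with the finiteness of each factor yields that $G$ is finite. Write $G=\prod_{i\in I}S_i$, where each $S_i$ is a non-abelian finite simple group, and suppose for contradiction that $I$ is infinite. The strategy is to exhibit a single element $g\in G$ whose right Engel sink is forced to be uncountable, contradicting the hypothesis. The mechanism is a ``support'' argument: I would attach to each subset $J\subseteq I$ a test element $x^{(J)}\in G$ for which every long commutator $[g,\,{}_r x^{(J)}]$ is supported exactly on $J$, so that any right Engel sink of $g$ must contain an element of support $J$ for each of the uncountably many subsets $J$ of $I$.

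The key local input comes from the simple factors. Since a finite Engel group is nilpotent (Zorn's theorem) and a non-abelian simple group is not nilpotent, no $S_i$ is an Engel group. Hence for each $i$ there exist $a_i,b_i\in S_i$ with
\[
[a_i,\,{}_r b_i]\neq 1\qquad\text{for every }r\geq 1 ,
\]
because the failure of the Engel condition provides a pair whose left-normed commutator sequence never reaches $1$ (and once such a sequence reaches $1$ it remains $1$ thereafter). No uniformity across $i$ is needed. Now set $g=(a_i)_{i\in I}\in G$, and for each subset $J\subseteq I$ define $x^{(J)}=(x_i)_{i\in I}\in G$ by $x_i=b_i$ if $i\in J$ and $x_i=1$ if $i\notin J$.

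Since commutators in a Cartesian product are computed coordinatewise, the $i$-th coordinate of $[g,\,{}_r x^{(J)}]$ equals $[a_i,\,{}_r b_i]$ when $i\in J$, and equals $[a_i,\,{}_r 1]=1$ when $i\notin J$. By the choice of $a_i,b_i$ the former is nontrivial for \emph{every} $r\geq 1$, so the support of $[g,\,{}_r x^{(J)}]$ is exactly $J$, independently of $r$. By the definition of a right Engel sink, any right Engel sink ${\mathscr R}(g)$ of $g$ contains $[g,\,{}_r x^{(J)}]$ for all sufficiently large $r$; thus ${\mathscr R}(g)$ contains at least one element whose support is $J$. For distinct subsets $J$ these elements have distinct supports and are therefore distinct, so ${\mathscr R}(g)$ has at least as many elements as there are subsets of $I$. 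As $I$ is infinite, this is uncountable, contradicting the hypothesis that every element of $G$ has a countable right Engel sink. Hence $I$ is finite, and $G$ is finite.

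The main obstacle is the simple-group input: one must know that each factor $S_i$ supplies a pair $(a_i,b_i)$ whose Engel sequence never becomes trivial, which is precisely the failure of the Engel condition in a non-nilpotent finite group. The only other point requiring care is that the support of $[g,\,{}_r x^{(J)}]$ be exactly $J$ for all $r$ simultaneously; this is what dictates the requirement ``$[a_i,\,{}_r b_i]\neq 1$ for all $r$'' rather than merely ``for some large $r$'', since over an infinite index set there is no common bound on the pre-periods of the individual coordinate sequences.
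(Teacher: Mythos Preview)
Your proof is correct and follows essentially the same architecture as the paper's: construct $g$ coordinatewise and, for each subset $J\subseteq I$, a test element $x^{(J)}$ supported on $J$ so that every long commutator $[g,\,{}_r x^{(J)}]$ has support exactly $J$, forcing any right Engel sink of $g$ to meet uncountably many support classes. The only difference is in securing the local input: the paper obtains the non-Engel pair in each factor via Baer's theorem on right Engel elements together with Lemma~\ref{l-min-r}, whereas you invoke Zorn's theorem (finite Engel $\Rightarrow$ nilpotent) directly --- your route is a touch more economical here.
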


\begin{proof}
Suppose the opposite: then $G$ is a Cartesian product of infinitely many non-abelian finite simple groups $G_i$ over an infinite set of indices $i\in I$.

Every non-abelian finite simple groups $S$ contains an element $s\in S$ with a nontrivial smallest right Engel sink $\mathscr R(s)\ne \{1\}$. Actually, any nontrivial element  $s\in S\setminus\{1\}$ has nontrivial minimal right Engel sink. Indeed, otherwise $s$  is a right Engel element of $S$, and right Engel elements of a finite group belong to its hypercentre by Baer's theorem \cite[12.3.7]{rob}.  By Lemma~\ref{l-min-r}, for any $z\in \mathscr R(s)$ we have
\begin{equation*}
z=[s,{}_nx]=[s,{}_{n+m}x]
\end{equation*}
for some $x\in S$ and some $n\geq 1$ and $m\geq 1$, and then also
\begin{equation*}
z=[s,{}_nx]=[s,{}_{n+ml}x]\quad \text{for any}\;\, l\in {\mathbb N}.
\end{equation*}

For every $i$, we choose a nontrivial element $g_i\in G_i$,  a nontrivial element  $z_{i}\in \mathscr R(g_i)\subseteq G_i$, and the corresponding $x_i\in G_i$ such that for some $n_i\geq 1$ and $m_i\geq 1$
\begin{equation}\label{e-cycl2}
z_i=[g_i,{}_{n_i}x_i]=[g_i,{}_{n_i+m_il}x_i]\quad \text{for any}\;\, l\in {\mathbb N}.
\end{equation}

Consider the element
$$
g=\prod _{i\in I} g_{i}.
$$
For any subset $J\subseteq I$, consider the element
$$
x_{J}=\prod _{j\in J} x_{j}.
$$
If  $\mathscr R(g)$ is any right Engel sink of $g$ in $G$, then for some $k\in {\mathbb N}$ the commutator $[g, \,{}_kx_{J}]$ belongs to $\mathscr R(a)$. Because of the properties \eqref{e-cycl2}, all the components of $[g, \,{}_kx_{J}]$ in the factors $G_j$ for $j\in J$ are nontrivial, while all the other components in $G_i$ for $i\not\in J$ are trivial by construction. Therefore for different subsets $J\subseteq I$ we thus obtain different elements of  $\mathscr R(g)$. The infinite set $I$ has at least continuum of different subsets, whence $\mathscr R(g)$ is uncountable, contrary to $g$ having a countable Engel sink by the hypothesis.
\end{proof}

\begin{theorem}\label{t4}
Suppose that $G$ is a profinite group in which every element has a countable Engel sink. Then $G$ has a finite normal subgroup $N$ such that $G/N$ is locally nilpotent.
\end{theorem}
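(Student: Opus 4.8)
The plan is to reduce the conclusion to the single statement that $\gamma_{\infty}(G)$ is finite. Indeed, if $\gamma_{\infty}(G)$ is finite, then $N=\gamma_{\infty}(G)$ is a finite normal subgroup, $G/N$ is pronilpotent, and hence $G/N$ is locally nilpotent by Theorem~\ref{t2} (the sink hypothesis being inherited by the section). I also record the elementary fact that this property transfers through finite normal subgroups: if $F\lhd G$ is finite and $\gamma_{\infty}(G/F)$ is finite, then, since $\gamma_{\infty}(G/F)=\gamma_{\infty}(G)F/F$, the preimage of $\gamma_{\infty}(G/F)$ is finite and contains $\gamma_{\infty}(G)$, so $\gamma_{\infty}(G)$ is finite. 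This allows me to factor out finite characteristic subgroups freely. By Proposition~\ref{pr-fnl} the group $G$ has finite nonprosoluble length, and I argue by induction on this length $l$.

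The base case $l=0$ is exactly the prosoluble case, where $\gamma_{\infty}(G)$ is finite by Theorem~\ref{t3}. For the inductive step I take the defining characteristic series $1=L_0\leq R_0<L_1\leq R_1<\dots\leq R_l=G$ and work at its bottom. The term $R_0$ is prosoluble and inherits countable right Engel sinks, so $\gamma_{\infty}(R_0)$ is finite by Theorem~\ref{t3}; it is characteristic in the characteristic subgroup $R_0$, hence a finite normal subgroup of $G$, and after factoring it out I may assume $R_0$ is pronilpotent. The next factor $L_1/R_0$ is a Cartesian product of non-abelian finite simple groups, so it is finite by Lemma~\ref{l-cart}. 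Thus $L_1$ is pronilpotent-by-finite, hence virtually prosoluble, and Corollary~\ref{c-virt} shows $\gamma_{\infty}(L_1)$ is finite; factoring out this finite characteristic subgroup, I may assume $L_1$ is pronilpotent. But $L_1/R_0$ is perfect and is now pronilpotent, so it collapses and $L_1=R_0$ is pronilpotent, the bottom non-soluble factor having been absorbed.

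Projecting the series modulo $L_1$ shows that $G/L_1$ has nonprosoluble length at most $l-1$, so by the induction hypothesis $\gamma_{\infty}(G/L_1)$ is finite. Let $W$ be its full preimage in $G$, a normal subgroup with $W/L_1$ finite and $L_1$ pronilpotent; then $W$ is virtually prosoluble and $\gamma_{\infty}(W)$ is finite by Corollary~\ref{c-virt}. Factoring out this finite characteristic subgroup, I may assume $W$ is pronilpotent. Now both $W$ and $G/W=(G/L_1)/\gamma_{\infty}(G/L_1)$ are pronilpotent, in particular prosoluble, so $G$ itself is prosoluble by Lemma~\ref{l-prosol-by-prosol}. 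Theorem~\ref{t3} then gives that $\gamma_{\infty}(G)$ is finite, which completes the induction and, by the initial reduction, proves the theorem.

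I expect the difficulty here to be organizational rather than conceptual, since the substantial work is already carried out in Proposition~\ref{pr-fnl}, Lemma~\ref{l-cart}, and Corollary~\ref{c-virt}. The care required is in checking that every subgroup factored out is genuinely characteristic in $G$ (so that the right Engel sink hypothesis survives in each quotient) and in correctly tracking the drop of the nonprosoluble length. The one genuinely non-routine move is the closing step: once the non-soluble bottom layer has been shown finite and absorbed, and the induction applied, the group is trapped between two pronilpotent pieces and is therefore prosoluble, so the already-settled prosoluble case applies.
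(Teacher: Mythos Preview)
Your proof is correct and takes essentially the same approach as the paper: induction on the nonprosoluble length furnished by Proposition~\ref{pr-fnl}, with Lemma~\ref{l-cart} and Corollary~\ref{c-virt} handling the bottom non-prosoluble layer. The paper's inductive step is more direct---once $\gamma_\infty(L_1)$ is shown finite, it observes via Lemma~\ref{l-prosol-by-prosol} that $R_1/\gamma_\infty(L_1)$ is prosoluble, so $G/\gamma_\infty(L_1)$ already has nonprosoluble length $l-1$ and induction finishes immediately---whereas your route (factoring out $\gamma_\infty(R_0)$, collapsing $L_1$ to pronilpotent, applying induction to $G/L_1$, and then reducing back to the prosoluble case via a second application of Corollary~\ref{c-virt}) is correct but adds steps the paper avoids.
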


\begin{proof}
By Proposition~\ref{pr-fnl} the group $G$ has finite nonprosoluble length $l$. This means that $G$ has a normal series
\begin{equation*}
1=L_0\leqslant R_0 <  L_1\leqslant R_1< L_1  \leqslant \dots \leqslant R_{l}=G
\end{equation*}
in which each quotient $L_i/R_{i-1}$ is a (nontrivial) Cartesian product of non-abelian finite simple groups, and each quotient $R_i/L_{i}$ is prosoluble (possibly trivial).
We argue by induction on $l$. When $l=0$, the group $G$ is prosoluble, and the result follows by Theorem~\ref{t3}.

Now let $l\geq 1$. By Lemma~\ref{l-cart} each of the nonprosoluble factors $L_i/R_{i-1}$ is finite. In particular, the subgroup $L_1$ is virtually prosoluble, and therefore $\gamma _{\infty}(L_1)$ is finite by Corollary~\ref{c-virt}. The quotient $R_1/ \gamma _{\infty}(L_1)$ is prosoluble by Lemma~\ref{l-prosol-by-prosol}. Hence the  nonprosoluble length of $G/\gamma _{\infty}(L_1)$ is $l-1$. By the induction hypothesis we obtain  that $\gamma _{\infty}(G/\gamma _{\infty}(L_1))$ is finite, and therefore $\gamma _{\infty}(G)$ is finite. By Theorem~\ref{t2} the quotient $G/\gamma _{\infty}(G)$ is locally nilpotent, and the proof is complete.
\end{proof}

\section{Compact groups}
\label{s-comp}
In this section we prove the main Theorem~\ref{t-main}
about compact groups with countable right Engel sinks.  We use the structure theorems for compact groups and the results of the preceding section on profinite groups.  Parts of the proof are similar to the proof of the main results of \cite{khu-shu,khu-shu191} about finite and countable left Engel sinks. In the end we reduce the proof to the situation where every element has a finite left Engel sink and then apply Theorem~\ref{t-left}.

 By the well-known structure theorems (see, for example, \cite[Theorems~9.24 and 9.35]{hof-mor}), the connected component of the identity $G_0$ of a compact (Hausdorff) group $G$ is a divisible normal subgroup such that $G_0/Z(G_0)$ is a Cartesian product of (non-abelian) simple compact Lie groups, while the quotient $G/G_0$ is a profinite group. (Recall that a group $H$ is said to be \emph{divisible} if for every $h\in H$ and every positive integer $k$ there is an element $x\in H$ such that $x^k=h$.)

We shall be using the following lemma from \cite{khu-shu}.

\begin{lemma}[{\cite[Lemma~5.3]{khu-shu}}] \label{l-eng}
Suppose that $G$ is a compact group in which every element has a finite left Engel sink and the connected component of the identity $G_0$ is abelian. Then for every $g\in G$ and for any $x\in G_0$ we have
$$
[x,\,{}_kg]=1\quad \text{for some}\;\,k=k(x,g)\in {\mathbb N}.
$$
\end{lemma}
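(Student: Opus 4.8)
The plan is to write $G_0$ additively and to recast the whole statement as a problem about a single additive operator. Fix $g\in G$ and let $\alpha$ be the continuous automorphism of the abelian group $G_0$ given by conjugation, $\alpha(y)=y^{g}$; since $G_0$ is normal it is $\alpha$-invariant and every $[y,{}_{k}g]$ stays in $G_0$. Putting $T=\alpha-1\in\mathrm{End}(G_0)$, one has $[x,{}_{k}g]=T^{k}x$, so the desired conclusion $[x,{}_{k}g]=1$ is exactly $T^{k}x=0$, and it suffices to show that for each $x\in G_0$ some iterate $T^{k}x$ vanishes. I would organize this around the set of $T$-periodic points
\[
P=\{\,s\in G_0\mid T^{b}s=s\ \text{for some}\ b\ge 1\,\},
\]
which is an increasing union of the subgroups $\ker(T^{b}-1)$, hence a $T$-invariant subgroup of $G_0$ on which $T$ acts injectively, and therefore bijectively. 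The entire argument then reduces to proving $P=\{0\}$.

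First I would extract two finiteness facts from the hypothesis. For any $y\in G_0\le G$ the left Engel sink property gives $T^{k}y=[y,{}_{k}g]\in\mathscr{E}(g)$ for all large $k$, and $\mathscr{E}(g)$ is finite; together with the finitely many initial terms this makes every forward orbit $\{T^{k}y\}_{k\ge 0}$ finite. Applying this to a periodic $s$, whose orbit is just its cycle, one gets $s=T^{bk}s\in\mathscr{E}(g)$ for a suitable multiple $bk$ of its period (this is also the content of Lemma~\ref{l-min}), so $P\subseteq\mathscr{E}(g)$. Thus $P$ is a \emph{finite} subgroup, and $T|_{P}$ is an automorphism of a finite abelian group, hence of finite multiplicative order, say $T^{B}=\mathrm{id}$ on $P$.

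The key step is that $P$ is \emph{divisible}. A compact connected abelian group is divisible (its Pontryagin dual is torsion-free), so given $s\in P$ and $n\ge 1$ I may choose $y\in G_0$ with $ny=s$. Its orbit is finite, so $y$ is preperiodic: $s'=T^{a}y\in P$ for some $a\ge 0$, and $ns'=T^{a}(ny)=T^{a}s$. Correcting by a power of the finite-order automorphism $T|_{P}$, pick $a'$ with $a+a'\equiv 0\pmod B$ and set $s''=T^{a'}s'\in P$; then $ns''=T^{a+a'}s=s$. Hence every element of $P$ is $n$-divisible within $P$, so $P$ is divisible. A finite divisible abelian group is trivial, giving $P=\{0\}$. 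Finally, for arbitrary $x\in G_0$ the finite orbit $\{T^{k}x\}$ is eventually periodic, its recurrent value lies in $P=\{0\}$, and so $T^{a}x=0$ for some $a$, which is precisely $[x,{}_{a}g]=1$.

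The step I expect to be the main obstacle, and the place where the hypothesis must be used with care, is the inclusion $P\subseteq\mathscr{E}(g)$. It is tempting but wrong to argue only that each orbit is finite: that by itself does not bound $P$, since an automorphism may have all orbits finite while its sink is infinite — for instance an automorphism $\alpha$ of a torus for which $\alpha-1$ has finite order, where every orbit is short yet every point is periodic, so $P$ is the whole torus and $\mathscr{E}(g)$ is infinite. What rescues the argument is that $\mathscr{E}(g)$ is a single finite set serving all $x$ simultaneously, which forces $P$ to be finite; the divisibility maneuver in the previous paragraph then upgrades this finiteness to triviality. Notably this route uses the finite-sink hypothesis only for $g$ itself, and uses connectedness of $G_0$ solely through divisibility, so no Lie-theoretic or spectral input is needed.
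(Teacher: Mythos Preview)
Your argument is correct. Note, however, that the present paper does not give its own proof of this lemma: it is quoted from \cite[Lemma~5.3]{khu-shu} and used as a black box, so there is nothing here to compare your route against directly. Your reduction to the additive endomorphism $T=\alpha-1$ of $G_0$, the identification of the obstruction as the subgroup $P$ of $T$-periodic points, and the two-step elimination of $P$---first $P\subseteq\mathscr{E}(g)$ forces $P$ finite, then divisibility of $G_0$ transported to $P$ via the preperiodicity-plus-correction trick forces $P$ divisible, hence trivial---is a clean self-contained proof. As you observe, it uses the finite-sink hypothesis only for the single element $g$ and uses connectedness of $G_0$ only through divisibility; every step checks out, including the crucial point that a single finite $\mathscr{E}(g)$ serves for all $y\in G_0$ simultaneously.
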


For compact groups with countable right Engel sinks, we begin with eliminating simple Lie groups.

 \begin{lemma}\label{l-lie}
A non-abelian simple compact Lie group contains an element all of whose right
Engel sinks are uncountable.
\end{lemma}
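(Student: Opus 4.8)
The plan is to produce a single element $g\in G$ all of whose right Engel sinks are uncountable; this is clearly enough. First I would reduce to a rank-one subgroup. By standard structure theory any non-abelian simple compact Lie group $G$ contains a closed subgroup $H$ locally isomorphic to $SU(2)$, so that $H\cong SU(2)$ or $H\cong SO(3)$ (a root $SU(2)$-subgroup). If $g\in H$ and $\mathscr R(g)$ is any right Engel sink of $g$ in $G$, then for $x\in H$ all long commutators $[g,\,{}_r x]$ lie in $H$, so $\mathscr R(g)\cap H$ is a right Engel sink of $g$ in $H$; hence it suffices to find $g\in H$ all of whose right Engel sinks in $H$ are uncountable. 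Moreover, if $H\cong SU(2)$, then using the double cover $\rho\colon SU(2)\to SO(3)$ and a lift $g$ of a suitable element of $SO(3)$, one sees that $\rho(\mathscr R(g))$ is a right Engel sink of $\rho(g)$ in $SO(3)$, because every element of $SO(3)$ lifts to $SU(2)$; so the whole problem reduces to the single group $SO(3)$.

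In $SO(3)$, writing $R_v(\alpha)$ for the rotation by angle $\alpha$ about the axis $v$, I would take $g=R_e(\pi)$ for a fixed (say vertical) axis $e$, and, for a parameter $\theta$, the involution $x_\theta=R_{a(\theta)}(\pi)$, where $a(\theta)$ lies on the cone of fixed polar angle $\psi$ about $e$ with azimuth $\theta$. Both $g$ and $x_\theta$ are involutions, and the product of two half-turns about axes meeting at angle $\psi$ is the rotation by $2\psi$ about their common perpendicular $n(\theta)=e\times a(\theta)$; hence
$$
[g,x_\theta]=(g\,x_\theta)^2=R_{n(\theta)}(4\psi).
$$
Since $n(\theta)\perp a(\theta)$, conjugation by the half-turn $x_\theta$ sends $n(\theta)$ to $-n(\theta)$ and therefore inverts every rotation about $n(\theta)$. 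Writing $c_r=[g,\,{}_r x_\theta]$ and letting $C_\theta$ be the circle of all rotations about $n(\theta)$, one gets $c_1\in C_\theta$ and $c_{r+1}=c_r^{-1}c_r^{\,x_\theta}=c_r^{-2}$, so the whole orbit stays on $C_\theta$ and $c_r=R_{n(\theta)}\big(4\psi\,(-2)^{r-1}\big)$.

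Now I would fix $\psi\in(0,\pi)$ with $\psi/\pi$ irrational. Then $4\psi\,(-2)^{r-1}$ is never an integer multiple of $2\pi$, so $c_r\ne1$ for every $r$; thus for each $\theta$ every right Engel sink $\mathscr R(g)$ must contain a tail $\{c_r:r\ge r(\theta)\}\subseteq C_\theta\setminus\{1\}$, and in particular at least one non-identity rotation about $n(\theta)$. As $\theta$ ranges over $[0,\pi)$ the axes $n(\theta)$ are pairwise non-parallel, so the punctured circles $C_\theta\setminus\{1\}$ are pairwise disjoint, because two distinct circles of rotations meet only in the identity (a non-trivial rotation has a well-defined axis up to sign). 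Hence $\mathscr R(g)$ acquires a distinct non-trivial element for each of the uncountably many values of $\theta$, and is therefore uncountable.

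The routine parts are the two reductions: restriction of a sink to $H$, and passage through the double cover. The heart of the argument, and the step I expect to require the most care, is the dynamical computation in $SO(3)$: checking that for two half-turns the iterated commutators collapse to the single recursion $c_{r+1}=c_r^{-2}$ living on one circle $C_\theta$, that these circles genuinely separate as $\theta$ varies so that their contributions to the sink are distinct, and that the irrationality of $\psi/\pi$ keeps every $c_r$ non-trivial, so that each $\theta$ forces a genuinely new point into the sink.
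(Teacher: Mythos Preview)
Your argument is correct and essentially complete; the reduction to $SO(3)$, the product-of-half-turns computation, the recursion $c_{r+1}=c_r^{-2}$, the irrationality condition on $\psi/\pi$, and the separation via disjoint maximal tori all check out.

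Your route differs from the paper's in an interesting way. The paper also reduces to $SO_3(\mathbb R)$ and also takes $g$ to be a half-turn, but it then works inside the metabelian subgroup $A\langle g\rangle$ where $A=\{a_\vartheta\}$ is the circle of rotations about a fixed axis. It computes the \emph{left} Engel commutators $[a_\vartheta,\,{}_n g]=a_{(-2)^n\vartheta}$, argues that a left Engel sink of $g$ must pick up one such element for each $\vartheta$ in a continuum of $\mathbb Q$-linearly independent reals, and only then invokes Lemma~\ref{l-metab} (for metabelian groups a right Engel sink of $g^{-1}$ is a left Engel sink of $g$) together with $g=g^{-1}$ to transfer the conclusion to right Engel sinks. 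By contrast, you compute right Engel commutators $[g,\,{}_r x_\theta]$ directly, and your separation mechanism is geometric: for distinct $\theta$ the orbits live in distinct one-parameter subgroups $C_\theta$, which meet only at the identity. This makes your proof self-contained (no appeal to Lemma~\ref{l-metab}) and replaces the Hamel-basis style independence argument by the cleaner disjointness of punctured tori; the paper's version, on the other hand, needs only a single matrix identity and avoids the geometric facts about products of half-turns.
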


\begin{proof}
It is well known  that any non-abelian compact Lie group $G$ contains a subgroup isomorphic either to $SO_3 (\mathbb{R})$ or $SU_2( \mathbb{C})$ (see, for example, \cite[Proposition~6.46]{hof-mor}), and therefore in any case, a section isomorphic to $SO_3 (\mathbb{R})$. Since the property that every element has a countable right Engel sink is  inherited by sections, it is sufficient to consider the case $G=SO_3 (\mathbb{R})$.

Consider the following elements of $SO_3 (\mathbb{R})$:
$$
a_\vartheta =\begin{pmatrix} \cos \vartheta &\sin \vartheta &0\\-\sin \vartheta  &\cos \vartheta &0\\0&0&1\end{pmatrix},\quad \vartheta\in  \mathbb{R},
$$
and
$$
g=\begin{pmatrix} -1&0&0\\0&1&0\\0&0&-1\end{pmatrix}.
$$
We have
\begin{align*}
[a_\vartheta ,g]=a_\vartheta ^{-1}a^g&=\begin{pmatrix} \cos (-\vartheta )&\sin (-\vartheta )&0\\-\sin (-\vartheta ) &\cos (-\vartheta )&0\\0&0&1\end{pmatrix}\cdot \begin{pmatrix} \cos \vartheta &-\sin \vartheta &0\\\sin \vartheta  &\cos \vartheta &0\\0&0&1\end{pmatrix}\\ &=\begin{pmatrix} \cos (-2\vartheta )&\sin (-2\vartheta )&0\\-\sin(-2\vartheta ) &\cos (-2\vartheta )&0\\0&0&1\end{pmatrix}=a_{-2\vartheta },
\end{align*}
and then by induction,
$$
[a_\vartheta ,\,{}_ng]=a_{(-2)^n\vartheta }.
$$
Therefore  any left Engel sink of $g$ must contain, for every $\vartheta \in \mathbb{R}$, an element of the form $a_{(-2)^{n(\vartheta )}\vartheta }$ for some $n(\vartheta )\in {\mathbb N}$. Since for $\vartheta $ we can choose continuum elements of $\mathbb{R}$ that are linearly independent over  $\mathbb{Q}$,
any left Engel sink of $g$ must be uncountable.

All the elements $a_{\vartheta}$ form an abelian subgroup $A$, which is normalized by $g$. The above arguments actually show that  any left Engel sink of $g$ in $A\langle g\rangle$ must be uncountable.

Since the  group $A\langle g\rangle$ is metabelian, by Lemma~\ref{l-metab} a right Engel sink of $g=g^{-1}$ in $A\langle g\rangle$ is a left Engel sink of~$g$ in  $A\langle g\rangle$. Therefore any right  Engel sink of $g$ must be uncountable.
\end{proof}

The next lemma is a step towards proving that every element has a finite Engel sink.

\begin{lemma}\label{l-ab}
Suppose that $G$ is a compact group in which every element has a countable right Engel sink. If $G$ has an abelian subgroup $A$ with locally nilpotent quotient $G/A$, then every element of $G$ has a finite left Engel sink.
\end{lemma}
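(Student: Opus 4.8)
The plan is to first upgrade the countable \emph{right} Engel sink hypothesis into a countable \emph{left} Engel sink statement for every element, then invoke Theorem~\ref{t-left}, and finally read off the finiteness directly from the resulting finite-by-(locally nilpotent) structure. Fix an arbitrary $g\in G$. Since $A$ is normal (it is the kernel of the quotient map onto $G/A$) and abelian, and $A\langle g\rangle/A$ is procyclic, the subgroup $A\langle g\rangle$ is metabelian. By hypothesis $g^{-1}$ has a countable right Engel sink in $G$, and intersecting it with $A\langle g\rangle$ gives a countable right Engel sink of $g^{-1}$ inside the subgroup $A\langle g\rangle$. Applying Lemma~\ref{l-metab} in this metabelian group, I obtain a countable left Engel sink $\mathscr E(g)$ of $g$ in $A\langle g\rangle$.

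The crucial point is that this $\mathscr E(g)$ is in fact a left Engel sink of $g$ in the whole of $G$. Indeed, for an arbitrary $x\in G$ the images $\bar x,\bar g$ generate a nilpotent subgroup of the locally nilpotent group $G/A$, so $[x,{}_n g]\in A$ for some $n=n(x,g)$. Writing $a=[x,{}_n g]\in A\subseteq A\langle g\rangle$, the standard identity $[x,{}_{n+l}g]=[a,{}_l g]$ shows that these commutators are commutators of the element $a\in A\langle g\rangle$ with $g$; since $\mathscr E(g)$ is a left Engel sink of $g$ in $A\langle g\rangle$, they lie in $\mathscr E(g)$ for all large $l$. Hence $[x,{}_l g]\in\mathscr E(g)$ for all sufficiently large $l$ and every $x\in G$, so $\mathscr E(g)$ is a countable left Engel sink of $g$ in $G$.

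As $g$ was arbitrary, every element of $G$ has a countable left Engel sink, and Theorem~\ref{t-left} then yields a finite normal subgroup $N$ with $G/N$ locally nilpotent. To conclude, observe that for any $g\in G$ and any $x\in G$ the images of $x$ and $g$ generate a nilpotent subgroup of $G/N$, so $[x,{}_n g]\in N$ for some $n=n(x,g)$; since $N$ is normal, $[x,{}_l g]\in N$ for all $l\geq n$. Thus the finite set $N$ serves as a left Engel sink of every element of $G$, which is exactly the assertion.

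The step I expect to require the most care is the transfer in the second paragraph: one must verify that the sink produced by Lemma~\ref{l-metab} inside $A\langle g\rangle$ genuinely controls the eventual commutators $[x,{}_l g]$ for \emph{all} $x\in G$, not merely for $x\in A\langle g\rangle$. This works precisely because $G/A$ being locally nilpotent forces every sequence $[x,{}_l g]$ to enter $A$, after which it becomes a sequence of commutators of an element of $A\langle g\rangle$ with $g$ and is therefore absorbed by $\mathscr E(g)$.
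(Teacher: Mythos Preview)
Your argument is correct and uses the same ingredients as the paper's proof: the reduction via local nilpotency of $G/A$ to get commutators $[x,{}_n g]$ into $A$, Lemma~\ref{l-metab} in the metabelian subgroup $A\langle g\rangle$, and Theorem~\ref{t-left}. The only organizational difference is that the paper applies Theorem~\ref{t-left} to each compact metabelian subgroup $A\langle g\rangle$ separately (after the sentence ``we can obviously assume that $G=A\langle g\rangle$''), whereas you first lift the countable left Engel sink of $g$ from $A\langle g\rangle$ to all of $G$ and then apply Theorem~\ref{t-left} once to $G$ itself; both routes are valid and yield the same conclusion.
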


\begin{proof}
Since $G/A$ is locally nilpotent, for showing that an element $g\in G$ has a  finite left  Engel sink we can obviously assume that $G=A\langle g\rangle$.
Since the group $A\langle g\rangle$ is metabelian, by Lemma~\ref{l-metab} every element of it also has a countable left Engel sink. Then $g$ has a finite left Engel sink by Theorem~\ref{t-left}.
  \end{proof}

We are now ready to prove the main result.

\begin{theorem}\label{t5}
Suppose that $G$ is a compact group in which every element has a countable right Engel sink. Then $G$ has a finite normal subgroup $N$ such that $G/N$ is locally nilpotent.
\end{theorem}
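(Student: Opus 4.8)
The plan is to reduce the statement to the situation covered by Theorem~\ref{t-left}: I shall show that every element of $G$ has a \emph{finite} left Engel sink, after which Theorem~\ref{t-left} immediately produces the required finite normal subgroup $N$ with $G/N$ locally nilpotent. I begin with the structure theorems recalled above: the identity component $G_0$ is a divisible normal subgroup with $G_0/Z(G_0)$ a Cartesian product of non-abelian simple compact Lie groups, and $G/G_0$ is profinite. If this product were nontrivial, then one of its simple factors would be a section of $G$, and by Lemma~\ref{l-lie} it would contain an element all of whose right Engel sinks are uncountable; since the hypothesis passes to sections, this is impossible. Hence $G_0/Z(G_0)=1$, that is, $G_0$ is abelian.

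The crucial step is to prove that $G_0$ is in fact central in $G$. Fix $g\in G$. The compact group $G_0\langle g\rangle$ is metabelian, so by Lemma~\ref{l-metab} all of its elements have countable left Engel sinks, and Theorem~\ref{t-left} shows that $\gamma_{\infty}(G_0\langle g\rangle)$ is finite. Writing the action of $g$ on $G_0$ additively, one has $\gamma_{\infty}(G_0\langle g\rangle)=\bigcap_n[G_0,{}_ng]$, so this intersection is finite. Now pass to the $g$-invariant torus (Lie) quotients $G_0/U$, which are cofinal because the dual module $\widehat{G_0}$ is a discrete module for the compact group $G$ and hence has finite orbits. On each such torus $T=G_0/U$ the image of $G$ in $\mathrm{Aut}(T)=GL_n(\mathbb{Z})$ is a compact subgroup of a discrete group, hence finite, so $g$ acts as a finite-order (semisimple) automorphism $\alpha$; thus $(\alpha-1)^nT=(\alpha-1)T$ is a fixed subtorus $S$ for all $n\geq1$. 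A nested-compactness argument shows that the finite group $\bigcap_n[G_0,{}_ng]$ still maps onto $S$, whence $S$ is finite; being a subtorus, $S=1$ and $\alpha=1$. As the invariant torus quotients separate points of $G_0$, this gives $g\in C_G(G_0)$. Since $g$ was arbitrary, $G_0\leq Z(G)$.

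It remains to assemble the pieces. The profinite group $G/G_0$ inherits the hypothesis, so by Theorem~\ref{t4} it has a finite normal subgroup $\bar N=N/G_0$ with $G/N$ locally nilpotent; here $N\trianglelefteq G$, the quotient $N/G_0$ is finite, and $G_0\leq Z(N)$. Therefore $N$ is central-by-finite and Schur's theorem \cite[10.1.4]{rob} gives that $[N,N]$ is finite; it is normal in $G$ since it is characteristic in $N$. Passing to $\widehat G=G/[N,N]$, the subgroup $\widehat N=N/[N,N]$ is abelian and normal with $\widehat G/\widehat N\cong G/N$ locally nilpotent. Lemma~\ref{l-ab} then shows that every element of $\widehat G$ has a finite left Engel sink, and Theorem~\ref{t-left} yields a finite normal subgroup $\widehat M\trianglelefteq \widehat G$ with $\widehat G/\widehat M$ locally nilpotent. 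Its preimage $M$ in $G$ is finite (as $[N,N]$ is finite) and normal, and $G/M$ is locally nilpotent, completing the proof.

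I expect the main obstacle to be the centrality step $G_0\leq Z(G)$, and within it the reconciliation of the \emph{global} finiteness of $\bigcap_n[G_0,{}_ng]$ with the per-quotient computation: a priori the images of $[G_0,{}_ng]$ in a torus quotient could all be large while their intersection is small, and it is precisely compactness (nested nonempty closed preimages of each point of $S$) that forces $\bigcap_n[G_0,{}_ng]$ to surject onto the stable subtorus $S$. Establishing the cofinality of $g$-invariant Lie quotients and the finiteness of the image of $G$ in each $GL_n(\mathbb{Z})$ are the supporting technical points.
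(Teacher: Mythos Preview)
Your argument is correct, and the overall architecture matches the paper's: show $G_0$ is abelian via Lemma~\ref{l-lie}, handle the profinite quotient $G/G_0$ via Theorem~\ref{t4}, produce a central-by-finite normal subgroup, apply Schur's theorem, and finish with Lemma~\ref{l-ab} and Theorem~\ref{t-left}. The genuine divergence is in the middle step.

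The paper does \emph{not} prove that $G_0\leq Z(G)$. Instead it first records (Lemma~\ref{l-eng2}) that each $g$ acts on $G_0$ in an Engel fashion, then applies Theorem~\ref{t4} to $G/G_0$, and singles out the finite normal subgroup $E\leq G/G_0$ generated by all minimal left Engel sinks. The technical heart is Lemma~\ref{l-central}, which shows only that $E$ centralizes $G_0$; this is done by decomposing the abstract divisible group $G_0$ into its $p$-primary parts $A_p$ and its torsion-free quotient, and arguing separately on each piece using the nilpotency of finite $p$-groups and Maschke's theorem. The preimage $F$ of $E$ then plays the role your $N$ plays.

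Your route replaces Lemma~\ref{l-central} entirely: from the finiteness of $\gamma_\infty(G_0\langle g\rangle)$ (obtained, as in the paper, through Lemma~\ref{l-metab} and Theorem~\ref{t-left}) you pass to the cofinal system of $G$-invariant torus quotients of $G_0$ supplied by Peter--Weyl, observe that $g$ acts with finite order there because a compact group maps to the discrete $GL_n(\mathbb Z)$ with finite image, and use compactness to push the finiteness of $\bigcap_n[G_0,{}_ng]$ down onto the stable subtorus $(\alpha-1)T$. This yields the stronger conclusion $G_0\leq Z(G)$ and is arguably cleaner, trading the paper's primary-decomposition bookkeeping for a structural Lie-theoretic argument. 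One small point: you oscillate between ``$g$-invariant'' and ``$G$-invariant'' torus quotients; the Peter--Weyl argument naturally gives $G$-invariant ones, which is what you need for the finite-image claim, so you may as well work with those throughout.
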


\begin{proof} In view of Lemma~\ref{l-lie}, the connected component of the identity $G_0$ is an abelian divisible normal subgroup.

\begin{lemma} \label{l-eng2}
For every $g\in G$ and for any $x\in G_0$ we have
$$
[x,\,{}_kg]=1\quad \text{for some}\;\,k=k(x,g)\in {\mathbb N}.
$$
\end{lemma}

\begin{proof}
We can obviously assume that $G=G_0\langle g\rangle$. The group $G_0\langle g\rangle$ satisfies the hypothesis of Lemma~\ref{l-ab} and therefore every element in it has a finite left Engel sink. Then for any $x\in G_0$ we have $[x,\,{}_kg]=1$ for some $k=k(x,g)\in {\mathbb N}$  by Lemma~\ref{l-eng}.
\end{proof}

We proceed with the proof of Theorem~\ref{t5}. Applying Theorem~\ref{t4} to the profinite group $\bar G=G/G_0$ we obtain a finite normal subgroup $D$ with locally nilpotent quotient. Then every element $g\in\bar G$ has a finite smallest left Engel sink $\bar{\mathscr E}(g)$ contained in $D$. Consider the subgroup generated by all such sinks:
$$
E=\langle \bar{\mathscr E}(g)\mid g\in \bar G\rangle\leq D.
$$
Clearly, $\bar{\mathscr E}(g)^h=\bar{\mathscr E}(g^h)$ for any $h\in\bar G$; hence  $E$ is a normal finite subgroup of $\bar G$. Note that $\bar G/E$ is also locally nilpotent by
Theorem~\ref{t-wz} as an Engel profinite group.

We now consider the action of  $\bar G$ by automorphisms  on $G_0$ induced by conjugation.

\begin{lemma}\label{l-central}
The subgroup $E$ acts trivially on $G_0$.
\end{lemma}

\begin{proof}
In the proof of this lemma, we consider $G_0$ as an abstract abelian divisible group. Thus,  $G_0$ is a direct product $A_0\times\bigoplus _pA_p$ of a torsion-free divisible group $A_0$ and divisible Sylow $p$-subgroups $A_p$ over various primes $p$. Clearly, every Sylow subgroup $A_p$ is normal in $G$.

First we show that $E$ acts trivially on each $A_p$. It is sufficient to show that for every $g\in \bar G$ every element  $z\in \bar{\mathscr E}(g)$ acts trivially on $A_p$. Consider the action of $\langle z, g\rangle$ on $A_p$. Note that $\langle z, g\rangle=\langle z^{\langle  g\rangle}\rangle\langle  g\rangle$, where $\langle z^{\langle  g\rangle}\rangle$ is  a finite $g$-invariant subgroup, since it is contained in the finite subgroup $E$. For any $a\in A_p$ we have $[a,\,{}_kg]=1$ for some $k=k(a,g)\in {\mathbb N}$  by Lemma~\ref{l-eng2}. Hence the subgroup
$$
\langle a^{\langle g\rangle}\rangle=\langle a,[a,g], [a,g,g],\dots \rangle
$$
is a finite $p$-group; note that this subgroup is  $g$-invariant.
 The images of $\langle a^{\langle g\rangle}\rangle$ under the action of elements of the finite group  $\langle z^{\langle  g\rangle}\rangle$  generate a finite $p$-group $B$, which is $\langle z, g\rangle$-invariant. It follows from Lemma~\ref{l-eng2} that $\langle z, g\rangle/C_{\langle z, g\rangle}(B)$
 must be a $p$-group. Indeed, otherwise there is a $p'$-element $y\in \langle z, g\rangle/C_{\langle z, g\rangle}(B)$ that acts non-trivially on the Frattini quotient $V=B/\Phi (B)$. Then $[[V,y],y]=[V,y]\ne 1$ and $C_{[V,y]}(y)=1$, whence  $[V,y]=\{[v,y]\mid v\in [V,y]\}$ and therefore also $[V,y]=\{[v,\,{}_ny]\mid v\in [V,y]\} $ for any $n$, contrary to Lemma~\ref{l-eng2}. Thus, $\langle z, g\rangle/C_{\langle z, g\rangle}(B)$ is a finite $p$-group. But since $z\in \bar{\mathscr E}(g)$, by Lemma~\ref{l-min} we have  $z=[z,{}_mg]$ for some $m\in {\mathbb N}$. Since a finite $p$-group is nilpotent, this implies that $z\in C_{\langle z, g\rangle}(B)$. In particular, $z$ centralizes $a$. Thus, $E$ acts trivially on $A_p$, for every prime~$p$.

 We now show that $E$ also acts trivially on the quotient $W=G_0/\bigoplus _pA_p$  of $G_0$ by its torsion part. Note that $W$ can be regarded as a vector space over ${\mathbb Q}$. Every element $y\in E$ has finite order and therefore by Maschke's theorem   $W=[W,y]\oplus C_W(y)$ and $[W,y]=\{[w,\,{}_ny]\mid w\in [W,y]\} $  for any $n$. If $[W,y]\ne 0$, then this contradicts Lemma~\ref{l-eng2}.

 Thus, $E$ acts trivially both on $W$ and on  $\bigoplus _pA_p$. Then any automorphism $\eta$ of $G_0$ induced by conjugation by $h\in E$ acts on every element $a\in A_0$ as $a^{\eta}=a^h=at$, where $t=t(a,h)$ is an element of finite order in $G_0$. Then  $a^{\eta ^i}=at^i$, and therefore the order of $t$ must divide the order of $\eta$.

 Assuming the action of $E$ on $G_0$ to be non-trivial, choose an element $h\in E$  acting on $G_0$ as an automorphism $\eta$ of some prime order $p$. Then there is $a\in A_0$ such that $a^h=as$, where $s\in A_p$ has order $p$. There is an element $a_1\in A_0$ such that $a_1^{p}=a$. Then $a_1^h=a_1s_1$, where $s_1^{p}=s$. Thus, $|s_1|=p^{2}$, and therefore  $p^{2}$ divides the order of $\eta $. We arrived  at a contradiction with $|\eta |=p$.
\end{proof}

We now finish the proof of Theorem~\ref{t5}. Let $F$ be the full inverse image of $E$ in $G$. Then we have normal subgroups $G_0\leq F\leq G$ such that $G/F$ is locally nilpotent, $F/G_0$ is finite, and $G_0$ is contained in the centre of $F$ by Lemma~\ref{l-central}. Since $F$ has centre of finite index, the derived subgroup $F'$ is finite by Schur's theorem \cite[Satz~IV.2.3]{hup}. The quotient $G/F'$ is an extension of an abelian subgroup by a locally nilpotent group. Hence every element of $G/F'$ has a finite left Engel sink by Lemma~\ref{l-ab}. By Theorem~\ref{t-left} the group  $G/F'$ has a finite normal subgroup with locally nilpotent quotient. The full inverse image of this subgroup is a required finite normal subgroup $N$ such that $G/N$ is locally nilpotent.
The proof of  Theorem~\ref{t5} is complete.
 \end{proof}

 \section*{Acknowledgements}
The authors thank John Wilson for stimulating discussions.

The first author was supported by Mathematical Center in Akademgorodok, the agreement with Ministry of Science and High Education of the Russian Federation no.~075-15-2019-1613. 
The second author was supported by FAPDF and CNPq-Brazil.

\end{document}